\documentclass[12pt,a4paper]{amsart}
 \usepackage[T1]{fontenc}
\usepackage[top=35mm, bottom=35mm, left=30mm, right=30mm]{geometry}             		
\usepackage{amssymb}
\usepackage{mathtools}
\usepackage{verbatim,enumerate}
\usepackage{xcolor}

\usepackage[looser]{newtxtext}
\usepackage{newtxmath}

\usepackage[hidelinks]{hyperref}
\allowdisplaybreaks

\newtheorem{defn}{Definition}[section]
\newtheorem{thm}[defn]{Theorem}
\newtheorem{lem}[defn]{Lemma}

\newtheorem{prop}[defn]{Proposition}
\newtheorem{coro}[defn]{Corollary}

\newtheorem{rem}[defn]{Remark}
 
\newcommand{\bbn}{\mathbb{N}}

\newcommand\dd{\mathop{}\!\mathrm{d}}

\usepackage{etoolbox}
\makeatletter
\patchcmd{\@settitle}
    {\uppercasenonmath\@title}   
    {}
    {}{\typeout{failed}}
\patchcmd{\@setauthors}
    {\MakeUppercase{\authors}}  
    {\authors}
    {}{\typeout{failed}}
\makeatother

\title[Precompactness, zero maximal pattern entropy and bounded mean complexity]{The equivalence of precompactness, zero maximal pattern entropy and bounded mean complexity for finite partitions}
\author[J. Li]{Jian Li}
\address[J. Li]{Institute for  Mathematical Sciences and Artificial Intelligence \& Department of Mathematics,
	Shantou University, Shantou, 515821, Guangdong, China}
\email{lijian09@mail.ustc.edu.cn}
\urladdr{https://orcid.org/0000-0002-8724-3050}

\author[T. Yu]{Tao Yu}

\address[T. Yu]{Department of Mathematics,
	Shantou University, Shantou, 515821, Guangdong, China}
\email{ytnuo@mail.ustc.edu.cn}

\author[X. Zhong]{Xianliang Zhong}
\address[X. Zhong]{Department of Mathematics,
	Shantou University, Shantou, 515821, Guangdong, China}
\email{xlzhong25@163.com}

\date{\today}

\subjclass{37A15, 37A35, 60A10}

\keywords{Finite partition, precompactness, maximal pattern entropy, bounded mean complexity, mean equicontinuity, amenable group action}

\begin{document}

\begin{abstract}
In this paper, we investigate several types of low complexity of finite partitions, including precompactness, zero maximal pattern entropy, bounded mean complexity and mean equicontinuity.
We first show that a collection of finite partitions in a standard probability space is precompact in the Rokhlin metric 
if and only if it has zero maximal pattern entropy 
if and only if the collection of the characteristic functions of atoms in those partitions is precompact in $L^2$ 
if and only if it has bounded mean complexity with respect the Hamming distance.
Next, we show that for a countably infinite discrete amenable group acting on a standard probability space,
a finite partition has zero maximal pattern entropy 
if and only if each characteristic function 
of atom in the partition is almost periodic 
if and only if it has bounded mean complexity with respect to some (and hence any) F{\o}lner sequence 
if and only if 
it is mean equicontinuous with respect to some (and hence any) tempered F{\o}lner sequence.
\end{abstract}

\maketitle

\section{Introduction}
In ergodic theory, measure-preserving systems with discrete spectrum represent the simplest class of aperiodic systems.
Characterizing discrete spectrum systems from varying complexity perspectives leads to fruitful outcomes.
In \cite{HN42}, Halmos and von Neumann proved that 
an ergodic measure-preserving transformation has discrete spectrum if and only if it is metrically isomorphic to a rotation on a compact abelian group, 
and the spectrum itself is a complete invariant for metric isomorphism within this class.
By the Koopman-von Neumann spectrum theorem, 
a measure-preserving transformation has discrete spectrum if and only if every square integrable function is almost periodic, 
that is, the orbit of every square integrable function under the Koompan operator is precompact in $L^2$, 
see e.g. \cite[Theorem 3.12]{G03}.
In \cite{K67}, Ku\v{s}nirenko demonstrated that a  measure-preserving transformation has discrete spectrum if and only if its measure-theoretical sequence entropy is zero with respect to any infinite sequence of positive integers.  
In~\cite{F97}, Ferenczi studied the measure-theoretic
complexity of finite partitions in ergodic systems via the Hamming distance and showed that an ergodic measure-preserving transformation has discrete spectrum if and only if the complexity function is bounded for any partition. 
In~\cite{Y19}, Yu showed that Ferenczi's result holds without the assumption of ergodicity.
In \cite{HY09} Huang and Ye introduced the notion of maximal pattern entropy, 
and showed that a measure-preserving transformation has discrete spectrum if and only if it has zero maximal pattern entropy, 
since the maximal pattern entropy is the supreme of measure-theoretical sequence entropy range over all infinite sequence of positive integers.
In \cite{VZP13} Vershik et al.\@ studied the complexity of measure-preserving system via admissible semi-metrics, and showed that  a measure-preserving transformation has discrete spectrum if and only if for every admissible semi-metric the scaling sequences are bounded.
See the survey \cite{VVZ23} for this approach.

To study the dynamical systems with the discrete spectrum, in \cite{F51} Fomin introduced the concept of mean L-stability for topological dynamical systems.
In \cite{S82}, Scarpellini obtained some necessary and sufficient conditions are established for a $\mathbb{R}$-flow to have discrete spectrum. These conditions include Besicovitch almost periodic functions and B-stable with respect to a bounded measurable function.
In \cite{HLY11} Huang et al.\@ studied the measure-theoretical sensitivity and equicontinuity for an invariant measure,
and show that measure-theoretical equicontinuity implies discrete spectrum.
In \cite{LTY15} Li et al.\@ introduced the concept of mean equicontinuity which is equivalent to mean L-stability, and showed that every ergodic measure on a mean equicontinuous system has discrete spectrum, answering an open question in \cite{S82}.
In \cite{G17} García-Ramos studied the mean equicontinuity for invariant measures, and showed that
an ergodic measure has discrete spectrum if and only if it is mean equicontinuity. 
In \cite{GM19} García-Ramos and Marcus studied the mean equicontinuity with respect to square integrable functions,
and showed that for an ergodic measure, a square integrable function mean equicontinuous if and only if it almost periodic. 
In \cite{Y19} Yu further studied the measure-theoretic complexity with respect to a square integrable function,
and showed that a square integrable function is almost periodic if and only if it is mean equicontinuous if and only if the measure has bounded complexity with respect to this function.
See also \cite{AN23} and \cite{LSS24} for related characterizations of discrete spectrum by tameness of square integrable functions and mean almost periodic points respectively.
During their investigation of the Sarnak’s conjecture,
Huang, Wang, and Ye \cite{HWY19} introduced the measure complexity of an invariant measure via the mean metric, proving that an invariant measure of a topological dynamical system has discrete spectrum if and only if the invariant measure has bounded measure complexity. 
Later in \cite{HLTXY21} Huang et al.\@ established the equivalence of discrete spectrum, bounded measure complexity and mean equicontinuity for an invariant measure. 
This result was subsequently extended to group actions. For countable discrete amenable group actions, in \cite{YZZ21} Yu et al.\@ proved that the equivalence holds along any F{\o}lner sequence.  
See \cite{XX24} for general discrete group actions, 
and \cite{HMXZ24} for locally compact group actions.

In this paper, we first introduce several types of low complexity for a collection of finite partitions in a standard probability space,  
without being restricted by the framework of a measure-preserving system. 
The following result is the first main theorem of this paper, which reveals the equivalence of precompactness, zero maximal pattern entropy and bounded mean complexity of a collection of finite partitions in a standard probability space.

\begin{thm}\label{thm:main1}
Let $(X, \mathcal{B}, \mu)$ be a standard probability space.
For $r\geq 2$, let $\mathfrak{P}_r$ be the collection of all partitions of $X$ with at most $r$ atoms.  
Then for a subset $K$ of $\mathfrak{P}_r$, the following assertions are equivalent:
\begin{enumerate}
    \item $K$ is precompact in the Rokhlin metric on $\mathfrak{P}_r$;
    \item the maximal pattern entropy of $K$ is zero;
    \item the set $\{\mathbf{1}_{A} \colon A \in \alpha \in K\}$ is precompact in $L^2(\mu)$;
    \item $K$ has bounded mean complexity with respect to the Hamming distance.
\end{enumerate} 
\end{thm}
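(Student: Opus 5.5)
The plan is to prove the cycle of implications (1)$\Leftrightarrow$(3)$\Rightarrow$(4)$\Rightarrow$(2)$\Rightarrow$(3). I will use the standard forms of the notions involved. The maximal pattern entropy of $K$ is $\limsup_n \frac1n \sup\{H_\mu(\alpha_1\vee\cdots\vee\alpha_n)\colon \alpha_i\in K\}$. Bounded mean complexity means the following: for every $\varepsilon>0$ there is $N(\varepsilon)$ such that for all $n$ and all $\alpha_1,\dots,\alpha_n\in K$, a set of measure $\ge 1-\varepsilon$ is covered by $N(\varepsilon)$ balls of radius $\varepsilon$ for the Hamming pseudometric
\[
d_n(x,y)=\tfrac1n\#\{i\colon \alpha_i(x)\neq\alpha_i(y)\},
\]
where $\alpha(x)$ denotes the atom of $\alpha$ containing $x$.

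For (1)$\Leftrightarrow$(3), I view partitions in $\mathfrak{P}_r$ as ordered $r$-tuples of sets, padding with empty sets. The Rokhlin distance is then comparable, up to constants depending only on $r$, with $\min_\sigma\sum_j \mu(A_j\triangle B_{\sigma(j)})$. Moreover $\mu(A\triangle B)=\|\mathbf 1_A-\mathbf 1_B\|_2^2$. Total boundedness therefore transfers in both directions:
\begin{itemize}
\item from (1) to (3), an $\varepsilon$-net of partitions gives an $\varepsilon$-net of atoms;
\item from (3) to (1), a finite net of atoms gives finitely many $r$-tuples built from net elements, and these form a net of partitions.
\end{itemize}

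For (3)$\Rightarrow$(4), fix $\delta>0$ and, using (1), pick a finite $\delta$-net $\beta_1,\dots,\beta_m$ of $K$ in the measure-of-disagreement sense. For each $i$ choose $j(i)$ with $\mu\{\alpha_i(x)\ne\beta_{j(i)}(x)\}\le\delta$, after matching atom labels. The function
\[
g(x)=\tfrac1n\sum_i \mathbf 1[\alpha_i(x)\neq\beta_{j(i)}(x)]
\]
has integral $\le\delta$. By Markov's inequality, $g\le\sqrt\delta$ off a set of measure $\le\sqrt\delta$. Two good points lying in the same atom of $\beta_1\vee\cdots\vee\beta_m$ satisfy $d_n\le 2\sqrt\delta$. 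The number of such atoms is at most $r^m$, independent of $n$, which gives bounded mean complexity.

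For (4)$\Rightarrow$(2), I use a Fano-type count. Take the cover by $N(\varepsilon)$ balls and add the exceptional set. Conditioned on which ball a point lies in, the name $(\alpha_1(x),\dots,\alpha_n(x))$ differs from the centre's name in at most $2\varepsilon n$ coordinates. Hence
\[
H(\alpha_1\vee\cdots\vee\alpha_n)\le \log(N(\varepsilon)+1)+n\bigl(H(2\varepsilon)+2\varepsilon\log r\bigr)+\varepsilon n\log r + 1.
\]
The term $\varepsilon n\log r$ accounts for the exceptional set, and $H(2\varepsilon)$ is the binary entropy of $2\varepsilon$. Dividing by $n$ and letting $\varepsilon\to0$ gives zero maximal pattern entropy.

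The main obstacle is (2)$\Rightarrow$(3), which I prove by contraposition. Suppose the atoms are not precompact in $L^2$. Then there are sets $A_k$, each an atom of some $\alpha_k\in K$, forming an $\varepsilon$-separated sequence. Passing to a subsequence, $\mathbf 1_{A_k}\to f$ weakly. Separation forces $\liminf_k\|\mathbf 1_{A_k}-f\|_2\ge \varepsilon/2$, and weak convergence then gives $\lim_k\|\mathbf 1_{A_k}\|^2-\|f\|^2\ge c>0$. I choose a further subsequence inductively. Let $\mathcal F_{k-1}$ be the finite algebra generated by $A_1,\dots,A_{k-1}$. On a finite algebra weak convergence implies norm convergence of conditional expectations, so I can pick $A_k$ with $\|E(\mathbf 1_{A_k}\mid\mathcal F_{k-1})\|^2\le\|E(f\mid\mathcal F_{k-1})\|^2+c/4\le\|f\|^2+c/4$. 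Then
\[
\|\mathbf 1_{A_k}-E(\mathbf 1_{A_k}\mid\mathcal F_{k-1})\|^2=\|\mathbf 1_{A_k}\|^2-\|E(\mathbf 1_{A_k}\mid\mathcal F_{k-1})\|^2\ge c/2.
\]
Using $h(p)\ge 4p(1-p)\cdot\log 2$ pointwise, I get $H(\{A_k,A_k^c\}\mid\mathcal F_{k-1})\ge c'$ for a constant $c'>0$. Summing over $k$ gives $H(\alpha_1\vee\cdots\vee\alpha_n)\ge H(\bigvee_{k\le n}\{A_k,A_k^c\})\ge c'n$. This is positive maximal pattern entropy, which contradicts (2).
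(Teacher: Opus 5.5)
Your proposal is correct. Apart from (3)$\Rightarrow$(4), which is essentially the paper's proof of (1)$\Rightarrow$(2) in Proposition~\ref{prop:precpt-bd-mean-complexity}, it takes a different route from the paper.

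The paper proves (1)$\Leftrightarrow$(2) separately in Proposition~\ref{prop:precpt-entropy}.
\begin{itemize}
\item The forward direction is a subadditivity estimate over a finite net.
\item The converse uses the technical Lemma~\ref{lem:H-mu-rho}: small $H_\mu(\alpha|\beta)$ forces $\alpha$ to be $\widetilde{\rho}$-close to a coarsening of $\beta$ with at most $r+1$ atoms. This is combined with the fact that a finite partition has only finitely many coarsenings.
\item It proves (1)$\Leftrightarrow$(3) directly only for $r=2$. General $r$ is reached through entropy, the family $K_2=\{\{A,A^c\}\colon A\in\alpha\in K\}$ and Lemma~\ref{lem:max-pattern-entropy-eq}.
\item It gets (4)$\Rightarrow$(3) by approximating separated atoms with step functions on the $C$ cover cells and applying pigeonhole to the $2^C$ possibilities.
\end{itemize}
You instead do three things differently.
\begin{itemize}
\item You handle (1)$\Leftrightarrow$(3) directly for every $r$.
\item Your Hamming-ball count for (4)$\Rightarrow$(2) gives the explicit estimate $h^*_\mu(K)\le h(2\varepsilon)+3\varepsilon\log r$, with $h$ the binary entropy.
\item Your (2)$\Rightarrow$(3) replaces Lemma~\ref{lem:H-mu-rho} by a soft Hilbert-space argument on two-set partitions $\{A_k,A_k^c\}$: weak limits, Pythagoras for conditional expectations onto finite algebras, and $h(p)\ge 4p(1-p)\log 2$.
\end{itemize}
Your route is shorter and avoids the combinatorics of Lemma~\ref{lem:H-mu-rho}. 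The paper's route yields that lemma as a quantitative extension of Ku\v{s}nirenko's $r=2$ result, and gives a direct entropy--precompactness link at the level of whole partitions.

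A few small repairs are needed.
\begin{itemize}
\item The metrics $\rho$ and $\widetilde{\rho}$ are not comparable up to constants. For $\alpha=\{A,A^c\}$ with $\mu(A)=t<1/2$ and $\beta=\{X\}$, one has $\widetilde{\rho}(\alpha,\beta)=2t$ but $\rho(\alpha,\beta)=H_\mu(\alpha)\sim t\log(1/t)$. They are only uniformly equivalent (Lemma~\ref{lem:Rokhlin-metric-eq}), but that suffices to transfer total boundedness.
\item In (3)$\Rightarrow$(1), the $r$-tuples of net sets need not be partitions. For each index tuple realized by some $\alpha\in K$, pick one such $\alpha$, and put $0$ in the net to cover padded empty atoms. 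Partitions sharing an index tuple are then $\widetilde{\rho}$-close.
\item The paper's bounded mean complexity quantifies over finite subsets $E\subset K$, not tuples with repetition. This is harmless: your (3)$\Rightarrow$(4) proves the stronger tuple version, and in (4)$\Rightarrow$(2) the join depends only on the distinct partitions.
\item $\lim_k(\|\mathbf{1}_{A_k}\|^2-\|f\|^2)$ should be a $\liminf$, or you should pass to a further subsequence on which $\mu(A_k)$ converges.
\end{itemize}
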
 

Next, we study the  complexity of a finite partition in a measure-preserving system with the acting group being a countably infinite discrete amenable group.
The following result is the second main theorem of this paper, which establishes the equivalence of several types of low complexity of partitions.
It provides a local version of the characterization of discrete spectrum for measure-preserving systems by low complexity,
which is new even for $\mathbb{Z}$-actions.

\begin{thm}\label{thm:main2}
Let $(X,\mathcal{B},\mu, G)$ be a measure-preserving system with  $G$ being a countably infinite discrete amenable group.
For a finite partition $\alpha$ of $X$, 
the following assertions are equivalent:
\begin{enumerate}
    \item the maximal pattern entropy of $\alpha$ is zero;
    \item for each $A\in \alpha$, $\mathbf{1}_{A}$ is  almost periodic;
    \item $\alpha$ has bounded mean complexity with respect to some (and hence any) F{\o}lner sequence in $G$;
    \item $\alpha$ is mean equicontinuous with respect to some (and hence any) tempered F{\o}lner sequence in $G$.
\end{enumerate}
\end{thm}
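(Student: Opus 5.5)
The plan is to reduce Theorem~\ref{thm:main2} to Theorem~\ref{thm:main1} by applying the latter to the orbit $K=\{g\alpha\colon g\in G\}\subset \mathfrak{P}_r$, where $r$ is the number of atoms of $\alpha$ and $g\alpha=\{g^{-1}A\colon A\in\alpha\}$. This requires the definitions for a single partition in a system to be compatible with those for collections. The maximal pattern entropy of $\alpha$ should be the maximal pattern entropy of the collection $K$, since it is defined as a supremum over finite subsets $\{g_1,\dots,g_n\}\subset G$ of $H(\bigvee_i g_i^{-1}\alpha)$. Then (1)$\Leftrightarrow$(2) is Theorem~\ref{thm:main1} (2)$\Leftrightarrow$(3) applied to $K$. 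Indeed, $\{\mathbf 1_B\colon B\in g\alpha,\ g\in G\}=\bigcup_{A\in\alpha}\{U_g\mathbf 1_A\colon g\in G\}$. A finite union of sets is precompact in $L^2(\mu)$ iff each one is, and the orbit $\{U_g\mathbf 1_A\}$ being precompact is exactly the almost periodicity of $\mathbf 1_A$.

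For (3), bounded mean complexity along a F{\o}lner sequence $(F_n)$ counts the minimal number of Hamming balls of radius $\epsilon$ covering $X$, with distance $\bar d_{F_n}(x,y)=\frac1{|F_n|}|\{g\in F_n\colon \alpha(gx)\neq\alpha(gy)\}|$, up to a set of measure $\epsilon$.
\begin{itemize}
\item To show (2)$\Rightarrow$(3) for any F{\o}lner sequence, I would use precompactness of the orbit. I would pick a finite $\delta$-net $g_1\alpha,\dots,g_m\alpha$ of $K$ in the Rokhlin metric, then partition $X$ by the finite join $\beta=\bigvee_j g_j\alpha$. Points $x,y$ in the same atom of $\beta$ satisfy
\[
\int\bar d_{F_n}(x,y)\,\dd(\mu\times\mu)\restriction_{\text{atom pairs}}\lesssim \delta,
\]
uniformly in $n$. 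The reason is that for each $g$, the partition $g\alpha$ is $\delta$-close to some $g_j\alpha$, and averaging over $g\in F_n$ gives the estimate. A Markov/Chebyshev argument then shows that most points of each atom lie in a small ball, so the count is bounded by $|\beta|$ times a constant independent of $n$. Note that this direction does not actually use the F{\o}lner property.
\item To show (3)$\Rightarrow$(2) for some F{\o}lner sequence, I would argue by contrapositive. If the orbit is not precompact, there is an infinite $\epsilon$-separated sequence $g_k\alpha$. By Theorem~\ref{thm:main1} (1)$\Rightarrow$(4), or by its proof, an infinite separated family yields unbounded complexity for the Hamming distance over finite subfamilies. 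Shifting these finite patterns into $F_n$ via the F{\o}lner property and invariance of $\mu$, I expect the finite sets $\{g_1,\dots,g_L\}$ to appear, up to small error, as translates $\{g_ih\}$ with $h\in F_n$ for many $h$. By averaging, the Hamming complexity on $F_n$ then dominates that of the finite family. This is the step I expect to be the main obstacle: transferring unbounded complexity from arbitrary finite sets of group elements to F{\o}lner sets. My first attempt would use the averaging identity $\frac1{|F_n|}\sum_{h\in F_n}$ together with $|F_n\triangle g_iF_n|/|F_n|\to0$, combined with a pigeonhole argument over the good sets of measure $\ge1-\epsilon$.
\end{itemize}

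For (4), mean equicontinuity along tempered $(F_n)$ means that for every $\epsilon$ there is a compact-free formulation: a set of measure $\ge1-\epsilon$ on which $\limsup_n\bar d_{F_n}$ is uniformly continuous in a suitable sense. Here I would use the pointwise ergodic theorem for tempered F{\o}lner sequences (Lindenstrauss). It gives $\lim_n\bar d_{F_n}(x,y)=\mathbb E(\mathbf 1_{\{\alpha(x')\neq\alpha(y')\}}\mid \mathcal I)$ for the product system on $X\times X$, $\mu\times\mu$-a.e. This lets me compare (4) with (3) along the same sequence. The direction (4)$\Rightarrow$(3) follows by covering the good set with finitely many mean balls. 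For (3)$\Rightarrow$(4), I would follow the argument of \cite{YZZ21,HLTXY21} with $\mathbf 1_A$ in place of a general function, using the almost periodicity from (2) and an Egorov-type uniformization to pass from $\limsup$ to a uniform $\epsilon$-bound on a large set. The ``some (and hence any)'' clauses then follow because (1) and (2) do not depend on the sequence.
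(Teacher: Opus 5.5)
Your proposal is correct and follows essentially the paper's route: (1)$\Leftrightarrow$(2) and (2)$\Rightarrow$(3) come from Theorem~\ref{thm:main1} applied to the orbit $\{g^{-1}\alpha\colon g\in G\}$, and (3)$\Rightarrow$(2) is the paper's Proposition~\ref{prop:mps-mean-cp}, which you run contrapositively (for a finite $E\subset G$ take $F_m$ with $Eh\subset F_m$ for at least half of the $h\in F_m$, average over $h$, pick a good $h$, and use invariance of $\mu$); (3)$\Leftrightarrow$(4) is Proposition~\ref{prop:mps-mean-eq}, via Lindenstrauss's pointwise theorem plus Egorov in one direction and a finite cover by mean balls in the other. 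A few small corrections: your contrapositive uses (4)$\Rightarrow$(1) of Theorem~\ref{thm:main1}, not (1)$\Rightarrow$(4); the step (4)$\Rightarrow$(3) still needs the passage from a $\limsup$ bound to a bound for every $n$ (Lemma~\ref{lem:partition-seq-mean-equi}); and the paper's (3)$\Rightarrow$(4) uses only bounded complexity at a single large time $N$ together with a Fubini choice of base points, not almost periodicity.
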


The structure of the paper is organized as follows. 
In Section 2, we introduce several types of low complexity
for a collection of finite partitions in a standard probability space, and show some basic properties.
Section 3 is devoted to prove Theorem~\ref{thm:main1}.
In Section 4, we first apply Theorem~\ref{thm:main1} to measurable group actions and measure-preserving actions on standard probability spaces.
Then we study the complexity of a partition with respect to a F{\o}lner sequence in the amenable acting group, and prove Theorem~\ref{thm:main2}.
In Section 5, we study the complexity of an invariant measure in a topological dynamical system using the metric on the state space, and show that our results extend the corresponding results in the literature.
  
\section{Preliminaries on the complexity of finite partitions}

For a finite set $A$, denote by $|A|$ the number of elements of $A$.
A probability space  $(X, \mathcal{B}, \mu)$ is called \emph{standard} if it is isomorphic modulo zero to a   complete separable metric space equipped with the completion of a Borel probability measure. 
In this section,   
we introduce some concepts of low  complexity for a collection of finite partitions in a standard probability space and show some basic properties.

\subsection{Finite measurable partition and entropy}
Let $(X, \mathcal{B}, \mu)$ be a standard probability space. 
A finite \emph{partition} of $X$ is a finite collection $\alpha$ of disjoint measurable sets $A_i$ (called \emph{atoms} of $\alpha$) whose union overs $X \pmod 0$.
We say that a partition $\alpha$ is \emph{finer} than 
another partition $\beta$, denoted by $\alpha \succeq \beta$, if each atom in $\beta$ is a union of atoms of $\alpha \pmod 0$. 
Partitions $\alpha$ and $\beta$ are \emph{equivalent} if each is a refinement of the other. 
Denote by $\mathfrak{P}$ the collection of all finite measurable partitions of $X$. 
We will identify two partitions provided that they are equivalent, that is, we deal with equivalence classes of partitions.

For a partition $\alpha=\{A_1,A_2,\dotsc,A_r\}$ of $X$, we define the \emph{entropy} of $\alpha$ by 
\[
    H_{\mu}(\alpha)=-\sum_{i=1}^r \mu(A_i)\log \mu(A_i).
\]
Recall that we make the convention that $0\log 0=0$.

For measurable subsets $A,B\subset X$ with $\mu(B) > 0$, set $\mu(A|B)=\frac{\mu(A\cap B)}{\mu(B)}$.
If $\mu(B)=0$, it is convenient to make the convention that $\mu(A|B)=0$.
For two finite partitions $\alpha$ and $\beta$ of $X$,
the \emph{condition entropy} of $\alpha$ with respect to $\beta$
is defined by the formula
\[
 H_{\mu}(\alpha|\beta)=
 \sum_{B \in \beta}\mu(B)H_{\mu}(\alpha|B),
 \]
 where
 \[ 
 H_{\mu}(\alpha|B) =\sum_{A \in \alpha}-\mu(A|B)\log \mu(A|B).
 \]
The quantity $H_{\mu}(\alpha|\beta)$ is the average entropy of the partition induced by $\alpha$
on an atom of $\beta$.

For $\alpha,\beta \in \mathfrak{P}$, define
\[
\rho(\alpha,\beta)=H_{\mu}(\alpha|\beta)+H_{\mu}(\beta|\alpha).
\]
It is easy to see that the function $\rho$ defines a metric on $\mathfrak{P}$, 
which is called the \emph{Rokhlin metric}.
It is necessary to consider the collection of finite partitions with at most a given number of atoms.
For a fixed integer $r\geq 2$,
denote by $\mathfrak{P}_r$ the collection of partitions in $\mathfrak{P}$ with at most $r$ atoms. 

For $\alpha,\beta \in \mathfrak{P}_r$, 
write $\alpha=\{A_1,A_2,\dotsc,A_r\}$ and $\beta=\{B_1,B_2,\dotsc,B_r\}$, by adding empty sets if necessary.
Define
\[
\widetilde{\rho} (\alpha,\beta)=\min_{\sigma \in S_r }\sum_{i=1}^r \mu(A_i \Delta B_{\sigma(i)}),
\]
where $S_r$ is the set of all permutations of $\{1,\cdots,r\}$. 
It is easy to check that $\widetilde{\rho}$ is a metric on $\mathfrak{P}_r$. 
We need the following folklore lemma, see e.g. \cite[Facts 1.7.7 and 1.7.15]{D11}.

\begin{lem}\label{lem:Rokhlin-metric-eq}
Let $(X,\mathcal{B},\mu)$ be a standard probability space and $r\geq 2$ be an integer. 
Then 
\begin{enumerate}
\item the metric space $(\mathfrak{P}_r,\rho)$ is complete and separable;
\item the metrics $\rho$ and $\widetilde{\rho}$ 
are uniformly equivalent on $\mathfrak{P}_r$.
\end{enumerate}
\end{lem}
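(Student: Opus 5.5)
The plan is to reduce everything to the space of $r$-tuples of measurable sets, modulo null sets, under the symmetric-difference metric $d(A,B)=\mu(A\Delta B)$. This space, the measure algebra $\mathcal{M}$ of $(X,\mathcal{B},\mu)$, is complete. It is separable because $\mathcal{B}$ is countably generated mod $0$ for a standard space: approximate by finite unions from a countable generating algebra. Consider the set $\mathcal{O}_r\subset\mathcal{M}^r$ of ordered partitions, i.e.\ tuples $(A_1,\dots,A_r)$ with $\mu(A_i\cap A_j)=0$ for $i\neq j$ and $\mu(\bigcup A_i)=1$. Equip it with the sum metric $\sum_i\mu(A_i\Delta B_i)$. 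Both defining conditions are continuous, since $|\mu(A\cap B)-\mu(A'\cap B')|\le \mu(A\Delta A')+\mu(B\Delta B')$ and similarly for unions. Hence $\mathcal{O}_r$ is closed, and so complete and separable. The space $(\mathfrak{P}_r,\widetilde\rho)$ is the quotient of $\mathcal{O}_r$ by the finite group $S_r$ acting isometrically by permuting coordinates, and $\widetilde\rho$ is exactly the quotient metric $\min_\sigma$. A quotient of a complete separable metric space by a finite isometric group action is again complete and separable. For completeness: from a $\widetilde\rho$-Cauchy sequence, pass to a subsequence with $\widetilde\rho(\alpha_n,\alpha_{n+1})<2^{-n}$, choose orderings inductively so that consecutive ordered representatives are within $2^{-n}$, and take the limit in $\mathcal{O}_r$. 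Separability passes to the image.

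The second step, which I expect to be the main work, is the uniform equivalence of $\rho$ and $\widetilde\rho$. This gives (2), and then (1) follows, because completeness and separability are preserved under uniformly equivalent metrics; Cauchy sequences coincide.

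For the direction "$\widetilde\rho$ small $\Rightarrow$ $\rho$ small", suppose $\sum_i\mu(A_i\Delta B_i)<\delta$ after reordering. Then the joint distribution $\mu(A_i\cap B_j)$ is within $O(\delta)$ of the diagonal distribution $\mathrm{diag}(\mu(A_i))$. We have
\[
H_\mu(\alpha|\beta)=H_\mu(\alpha\vee\beta)-H_\mu(\beta).
\]
Here $H$ is a uniformly continuous function of the finitely many ($r^2$) weights, since $t\mapsto -t\log t$ is uniformly continuous on $[0,1]$. At the diagonal this difference equals $0$. Hence $\rho(\alpha,\beta)\le\varepsilon(\delta)$, with $\varepsilon(\delta)\to0$ depending only on $r$.

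For the converse, suppose $\rho(\alpha,\beta)<\eta$. For each $j$ with $\mu(B_j)>0$, $H_\mu(\alpha|B_j)$ is small on average, which forces most of $B_j$ to lie in a single atom. Quantitatively, use the inequality $1-\max_i p_i\le H(p)/\log 2$ for a probability vector $p$. This gives a map $f\colon j\mapsto i(j)$ with
\[
\sum_j\mu(B_j\setminus A_{f(j)})\le \eta/\log 2 .
\]
Symmetrically there is a map $g$ from atoms of $\alpha$ to atoms of $\beta$. The obstacle is that $f$ need not be injective: several small atoms of $\beta$ may map into one atom of $\alpha$. I handle this by matching only the large atoms. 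Any $B_j$ with $\mu(B_j)>2\eta/\log2$ and any $A_i$ with $\mu(A_i)>2\eta/\log2$ must be mutually matched by $f$ and $g$, since each has more than half its mass inside its assigned partner. So $f$ is injective on large atoms, with $g\circ f=\mathrm{id}$ there. Complete $f$ to a permutation $\sigma$ arbitrarily on the remaining small atoms. The total symmetric difference is then bounded by the two displayed error sums, plus the masses of the at most $2r$ small atoms, each at most $2\eta/\log2$. This gives $\widetilde\rho\le C_r\,\eta$, which is uniform. Together with the first direction, this yields (2), and with the first step, (1).
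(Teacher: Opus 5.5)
The paper gives no proof of this lemma. It quotes it as folklore from Downarowicz's book (Facts 1.7.7 and 1.7.15), so your argument is an independent, self-contained route, and it is essentially correct.

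\begin{itemize}
\item Proving completeness and separability first for $\widetilde\rho$ is clean. Viewing it as the quotient of the closed set of ordered partitions in $\mathcal{M}^r$ by the isometric $S_r$-action, and then transferring to $\rho$ through (2), also shows that standardness is needed only for separability.
\item The direction $\widetilde\rho\to0\Rightarrow\rho\to0$ is fine. It rests on uniform continuity on the simplex of $p\mapsto\sum_{i,j}\phi(p_{ij})-\sum_j\phi(\sum_i p_{ij})$, with $\phi(t)=-t\log t$, which vanishes at diagonal points.
\item The argmax matching in the other direction gives an explicit linear bound $\widetilde\rho\le C_r\rho$, which the citation leaves implicit.
\item That matching is the two-sided counterpart of the paper's Lemma~\ref{lem:H-mu-rho}. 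There only $H_\mu(\alpha|\beta)$ is small, so the paper must settle for a coarsening of $\beta$ with an extra $(r+1)$-st atom. Having both conditional entropies small lets you produce a genuine permutation.
\end{itemize}

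Two details of the matching need repair, but neither affects the conclusion. Put $c=\eta/\log 2$.

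First, ``more than half its mass inside its partner'' is not what forces $g(f(j))=j$ when $\mu(B_j)>2c$. The operative estimate is $\mu(A_{f(j)}\cap B_j)>\mu(B_j)-c>c$. If $g(f(j))\neq j$, then $A_{f(j)}\cap B_j\subset A_{f(j)}\setminus B_{g(f(j))}$, a set of measure $<c$, which is a contradiction. Symmetrically, $f(g(i))=i$ when $\mu(A_i)>2c$.

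Second, after you match $L_B=\{j\colon\mu(B_j)>2c\}$ by $f$, the leftover atoms of $\alpha$ need not have mass at most $2c$. Take $\alpha=\{A_1,A_1^c\}$ and $\beta=\{B_1,B_1^c\}$ with $B_1\subset A_1$, $\mu(B_1)\le 2c<\mu(A_1)$ and $\mu(A_1\setminus B_1)$ tiny.
\begin{itemize}
\item Then $\rho(\alpha,\beta)\to0$ as $\mu(A_1\setminus B_1)\to0$ with $c$ fixed, so this is compatible with $\rho<\eta$.
\item Yet the index $1$ lies outside $f(L_B)$.
\item For $r\ge3$, your arbitrary completion may then pair $A_1$ with a padding empty set, contributing more than $2c$.
\end{itemize}

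Such an atom still satisfies $\mu(A_i)<\mu(B_{g(i)})+c\le 3c$. So your estimate survives in the form $\widetilde\rho<(2+5r)c$.

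Alternatively, let $L_A=\{i\colon\mu(A_i)>2c\}$ and match $L_B\cup g(L_A)$ using $f$ and $g^{-1}$. This is consistent because $g\circ f=\mathrm{id}$ on $L_B$ and $f\circ g=\mathrm{id}$ on $L_A$. After that, every leftover atom on either side genuinely has mass at most $2c$, as you intended.
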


We need the following technical result. 
Note that the case $r=2$ is proved in \cite{K67}.
\begin{lem}\label{lem:H-mu-rho}
Let $(X, \mathcal{B}, \mu)$ be a standard probability space
and $r\geq 2$ be a fixed integer.  
For any $\varepsilon>0$ there exists $\delta>0$ such that for any $\alpha \in \mathfrak{P}_r$ and $\beta\in\mathfrak{P}$, 
if $H_{\mu}(\alpha|\beta)<\delta$, 
then there exists $\beta ' \preceq \beta $ with $\beta '\in \mathfrak{P}_{r+1}$ such that $\widetilde{\rho}(\alpha,\beta')<\varepsilon$.
\end{lem}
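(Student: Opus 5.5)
The idea is to build $\beta'$ by assigning each atom of $\beta$ to the atom of $\alpha$ it mostly lies in, and putting the atoms that are not well concentrated into one extra "garbage" atom; this is why we need $r+1$ atoms rather than $r$. Write $\alpha=\{A_1,\dots,A_r\}$, and fix a threshold $\eta\in(0,1/2)$ to be chosen in terms of $\varepsilon$ and $r$. For $B\in\beta$ with $\mu(B)>0$, call $B$ \emph{good} if $\max_i \mu(A_i|B)>1-\eta$. Since $\eta<1/2$, the maximizing index $i(B)$ is then unique. Define
\[
B'_i=\bigcup\{B\in\beta \colon B \text{ good},\ i(B)=i\},\quad i=1,\dots,r,
\]
and let $B'_{r+1}$ be the union of the bad atoms together with the null atoms. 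Then $\beta'=\{B'_1,\dots,B'_{r+1}\}\preceq\beta$ and $\beta'\in\mathfrak{P}_{r+1}$.

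The key estimate is that a bad atom has conditional entropy bounded below uniformly. If $\max_i\mu(A_i|B)\le 1-\eta$, then $H_\mu(\alpha|B)\ge c(\eta)>0$. One way to see this: $-\sum p_i\log p_i\ge -\log\max_i p_i\ge -\log(1-\eta)=:c(\eta)$, using $\sum p_i\log p_i\le \log \max p_i$. Hence
\[
\mu(B'_{r+1})\le H_\mu(\alpha|\beta)/c(\eta)<\delta/c(\eta).
\]
For good atoms, $-\sum p_i\log p_i\ge \sum_{j\ne i(B)} p_j\,(-\log p_j)\ge \log 2\sum_{j\neq i(B)}p_j$, since each such $p_j<1/2$. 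So the total mass of $B\setminus A_{i(B)}$ over good $B$ is at most $H_\mu(\alpha|\beta)/\log 2<\delta/\log 2$.

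Next we compare $\alpha$, padded with an empty $(r+1)$-th atom $A_{r+1}=\emptyset$, with $\beta'$ under the identity permutation, working in $\mathfrak{P}_{r+1}$:
\[
\sum_{i=1}^{r+1}\mu(A_i\Delta B'_i)=2\,\mu\Bigl(X\setminus\bigcup_{i}(A_i\cap B'_i)\Bigr).
\]
The set $X\setminus\bigcup_i(A_i\cap B'_i)$ is contained in $B'_{r+1}\cup\bigcup_{B\text{ good}}(B\setminus A_{i(B)})$. Its measure is therefore at most $\delta/c(\eta)+\delta/\log 2$, and the sum above is at most $2(\delta/c(\eta)+\delta/\log 2)$.

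Choose $\eta=1/4$, say, and then $\delta$ so small that this quantity is less than $\varepsilon$. Here $\widetilde\rho(\alpha,\beta')$ is understood in $\mathfrak{P}_{r+1}$, with $\alpha\in\mathfrak{P}_r\subset\mathfrak{P}_{r+1}$. Note that $\delta$ does not even depend on $r$.

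The main point requiring care is the lower bound on $H_\mu(\alpha|B)$ for bad atoms. That bound is exactly the step that fails if we insist on $r$ atoms, since we would have nowhere to put the bad atoms. With the extra atom, the rest of the argument is routine bookkeeping.
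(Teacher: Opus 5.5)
Your proof is correct, and it uses the same construction as the paper. Atoms of $\beta$ that are concentrated on a single atom of $\alpha$ are sent to that atom, and everything else goes into an extra $(r+1)$-th atom. The difference lies in the estimates.

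The paper calls $B_j$ good when $H_\mu(\alpha|B_j)<C\delta$. It then uses the shape of $-t\log t$ to get the dichotomy $\mu(A_i|B_j)>b$ or $\mu(A_i|B_j)<a$, and bounds the misassigned mass atom by atom, by $a$ for each wrong atom of $\alpha$. This forces $a<\varepsilon/(2r^2)$ and $C>2(r+1)/\varepsilon$, so the paper's $\delta$ depends on $r$.

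You instead classify by $\max_i\mu(A_i|B)>\frac{3}{4}$ and use the averaged inequalities $H_\mu(\alpha|B)\ge-\log\max_i\mu(A_i|B)$ and $H_\mu(\alpha|B)\ge\log 2\sum_{j\neq i(B)}\mu(A_j|B)$. These give the linear bound $\widetilde\rho(\alpha,\beta')\le 2\bigl(1/\log\frac{4}{3}+1/\log 2\bigr)H_\mu(\alpha|\beta)$, uniformly in $r$. That is a cleaner and quantitatively stronger argument than the paper's.

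One correction to your closing remark: the extra atom is not actually forced. Assign every atom $B$ of $\beta$ with $\mu(B)>0$, bad ones included, to an index $i(B)$ maximizing $\mu(A_i|B)$ (null atoms arbitrarily). The inequality $1-t\le-\log t$ then gives $\mu(B\setminus A_{i(B)})=\mu(B)\bigl(1-\max_i\mu(A_i|B)\bigr)\le\mu(B)H_\mu(\alpha|B)$. Summing over $B$ produces $\beta'\preceq\beta$ in $\mathfrak{P}_r$ with $\widetilde\rho(\alpha,\beta')\le 2H_\mu(\alpha|\beta)$. The version with $\mathfrak{P}_{r+1}$ that you proved is exactly the stated lemma, and it is all that Proposition~\ref{prop:precpt-entropy} uses. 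Still, the reason you gave for needing $r+1$ atoms is not right.
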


\begin{proof}
Fix $\varepsilon>0$, $\alpha \in \mathfrak{P}_r$ and $\beta\in\mathfrak{P}$.
Enumerate $\alpha=\{A_1,\dotsc,A_r\}$ and $\beta=\{B_1,\dotsc,B_m\}$. 
Pick a sufficiently small constant $a$ such that  $0<a<\min\{\frac{1}{r},\frac{\varepsilon}{2r^2},\frac{1}{e}\}$ and $-a\log a < \frac{\log2}{2}$.
Let $C>\frac{2(r+1)}{\varepsilon}$ and $\delta<-\frac{a\log a}{C} $. 
Suppose that $H_{\mu}(\alpha|\beta)<\delta$.
Define 
    \[
    F=\{j\in \{1,\dotsc,m\}\colon 
    H_{\mu}(\alpha|B_j)<C\delta <-a\log a\}.
    \]
Since
    \[
     \delta>H_{\mu}(\alpha|\beta)=\sum_{j=1}^m \mu(B_j)H_{\mu}(\alpha|B_j)\geq \mu\biggl(\bigcup_{j \in \{1,\dotsc,m\}\setminus F} B_j\biggr) C\delta,
     \] 
$\mu(\bigcup_{j \in F}B_j) \geq 1-\frac{1}{C}$. 
 Let $b>\max\{\frac{1}{e},\frac{1}{2}\}$  with $-b \log b=-a\log a$. 
For any $i \in \{1,\dotsc,r\}$ and $j \in F$,
\[
    H_\mu(\alpha|B_j)=\sum_{i=1}^r -\mu(A_i|B_j)\log \mu(A_i|B_j)<C\delta,
\]
then either $\mu(A_i | B_j)>b$ or $\mu(A_i | B_j)<a$.
For any $i \in \{1,\dotsc,r\}$, let
\[
    E_i=\{j \in F \colon \mu(A_i | B_j)>b\}\text{ and }D_i=\bigcup_{j \in E_i} B_j.
\] 
Finally, let $D_{r+1}=\bigcup_{j \in \{1,\dotsc,m\} \setminus F} B_j$. Then $\mu(D_{r+1})\leq \frac{1}{C}$.

We claim that $\{E_i\colon 1\leq i\leq r\}$ is a partition of $F$.
In fact, 
if there exists $j \in F \setminus\bigcup_{i=1}^r E_i$, 
then for any $i \in \{1,\dotsc,r\}$, 
we have $\mu(A_i | B_j)<a$. 
 Thus
    \[
    \mu(B_j)=\sum_{i=1}^r \mu(A_i \cap B_j) \leq ra \mu(B_j)<\mu(B_j),
    \]
which is a contradiction. 
Assume that there exist $1\leq i_1<i_2\leq r$ such that $E_{i_1}\cap E_{i_2}\neq\emptyset$. 
Pick $j\in E_{i_1}\cap E_{i_2}$.
Then $\mu(A_{i_1}|B_j)>b>\frac{1}{2}$ and $\mu(A_{i_2}|B_j)>b>\frac{1}{2}$,
which implies that $\mu(A_{i_1}\cap B_j)> \frac{1}{2}\mu(B_j)$ and $\mu(A_{i_2}\cap B_j)> \frac{1}{2}\mu(B_j)$.
This contradicts to the fact $A_{i_1}\cap   A_{i_2}=\emptyset$. 
    
Let $\beta'=\{D_1,\dotsc,D_r,D_{r+1}\}$.
Then $\beta' \preceq \beta$ and $\beta'\in\mathfrak{P}_{r+1}$. 
For any $i \in \{1,\dotsc,r\}$ and $j \in E_i$, we have $\mu(A_i | B_j)>b$ and  $\ell \in \{1,\dotsc,r\} \setminus i$, $\mu(A_\ell | B_j)<a$. 
Thus for any $i \in \{1,\dotsc,r\}$, we have
\begin{align*}
\mu(A_i \triangle D_i)
    &=\mu(D_i \setminus A_i)+\mu(A_i \setminus D_i) 
    =\mu\biggl(\bigcup_{j \in E_i}B_j  \cap A_i^c \biggr) +\mu(A_i \cap D_i^c) \\
    &= \sum_{j \in E_i} \mu(B_j \cap A_i^c)+
       \sum_{j \in \{1,\dotsc,r\} \setminus i} \mu(A_i \cap D_j) + \mu(A_i \cap D_{r+1}) \\
    &\leq \sum_{j \in E_i } \sum_{\ell\in \{1,\dotsc,r\} \setminus i} \mu(B_j \cap A_\ell) + a \sum_{j \in \{1,\dotsc,r\} \setminus i} \mu(D_j)+ \mu(D_{r+1}) \\
    &\leq \sum_{j \in E_i}a(r-1)\mu(B_j)+a +\tfrac{1}{C}\\
    &\leq a(r-1) +a+\tfrac{1}{C}=ar+\tfrac{1}{C}.
    \end{align*}
Thus
\[ 
\sum_{i=1}^r \mu(A_i \triangle D_i)+\mu(\emptyset \triangle D_{r+1}) 
\leq \sum_{i=1}^r \Bigl(ar +\tfrac{1}{C}\Bigr) +\tfrac{1}{C} \leq ar^2 +
    \tfrac{r+1}{C} \leq \varepsilon,
\]
this implies that $\widetilde{\rho}(\alpha,\beta')<\varepsilon$.
\end{proof}

\subsection{Sequence entropy of partitions}
Let $(X, \mathcal{B}, \mu)$ be a standard probability space.
For $\alpha,\beta \in \mathfrak{P}$, define $\alpha \vee \beta = \{A_i \cap B_j : A_i \in \alpha, B_j \in \beta\}$. 
For a sequence $\mathcal{A}=(\alpha_{i})_{i=1}^{\infty}$ in $\mathfrak{P}$,
we define the \emph{sequence entropy} of  $(\alpha_{i})_{i=1}^{\infty}$ by 
\[
h_{\mu}(\mathcal{A})=
\limsup_{n \to \infty} \frac{1}{n} H_{\mu}\biggl(\bigvee_{i=1}^n \alpha_{i}\biggr).
\]

Following the idea in \cite{HY09}, we introduce the concept of maximal pattern entropy of a collection of partitions.
For a subset $K$ of $\mathfrak{P}$, 
we define the \emph{maximal pattern entropy} of $K$ by
\[
h^{*}_{\mu}(K)= 
\limsup_{n \to \infty} \frac{1}{n} P^{*}_{K,\mu}(n),  
\]
where for each $n\in\mathbb{N}$,
\[
P^{*}_{K,\mu}(n)=\sup_{\alpha_1,\dotsc,\alpha_n \in K} H_{\mu}\biggl(\bigvee_{i=1}^n \alpha_i\biggr).
\]
It is easy to see that the sequence $(P^{*}_{K,\mu}(n))$ is subadditive, by the Fekete's Lemma one has
\[
h^{*}_{\mu}(K)= 
\lim_{n \to \infty} \frac{1}{n} P^{*}_{K,\mu}(n). 
\]

\begin{lem}\label{lem:max-pattern-entropy-eq}
Let $(X, \mathcal{B}, \mu)$ be a standard probability space and $K$ be a subset of $\mathfrak{P}$. 
Then 
\[
    h^{*}_{\mu}(K)= \sup_{\mathcal{A}\in \mathcal{S}_K } h_{\mu}(\mathcal{A}),
\]
where $\mathcal{S}_K$ is the collection of all sequences $\mathcal{A}=(\alpha_{i})_{i=1}^{\infty}$ of $K$,
and there is some $\mathcal{A} \in \mathcal{S}_K $ such that $h^{*}_{\mu}(K)= h_{\mu}(\mathcal{A})$. 
\end{lem}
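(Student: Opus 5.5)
The inequality $\sup_{\mathcal{A}\in\mathcal{S}_K} h_\mu(\mathcal{A})\le h^*_\mu(K)$ is immediate. For any sequence $\mathcal{A}=(\alpha_i)$ in $K$ and every $n$, we have $H_\mu(\bigvee_{i=1}^n\alpha_i)\le P^*_{K,\mu}(n)$. Dividing by $n$ and taking $\limsup$ gives $h_\mu(\mathcal{A})\le h^*_\mu(K)$. It therefore suffices to construct a single sequence $\mathcal{A}\in\mathcal{S}_K$ with $h_\mu(\mathcal{A})\ge h^*_\mu(K)$. This gives both the equality and the attainment. If $h^*_\mu(K)=0$, any sequence works, so assume $h:=h^*_\mu(K)>0$ (possibly $+\infty$; the argument below is the same with targets $N_k$ in place of $h-1/k$).

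The plan is a concatenation argument along rapidly growing blocks. Fix $\varepsilon_k\downarrow 0$. By the definition of $P^*_{K,\mu}$ and the fact that $\frac1n P^*_{K,\mu}(n)\to h$ (Fekete), we can choose integers $n_k$ and partitions $\alpha^{(k)}_1,\dots,\alpha^{(k)}_{n_k}\in K$ with $H_\mu(\bigvee_{i=1}^{n_k}\alpha^{(k)}_i)\ge n_k(h-\varepsilon_k)$. We choose the $n_k$ inductively so that $n_k\ge k\,(n_1+\dots+n_{k-1})$. Let $\mathcal{A}$ be the concatenation of the blocks $(\alpha^{(1)}_i)_{i\le n_1}$, $(\alpha^{(2)}_i)_{i\le n_2}$, and so on. Set $L_k=n_1+\dots+n_k$.

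The key estimate uses monotonicity of entropy under refinement. We have $H_\mu(\bigvee_{i=1}^{L_k}\mathcal{A}_i)\ge H_\mu(\bigvee_{i=1}^{n_k}\alpha^{(k)}_i)\ge n_k(h-\varepsilon_k)$. Since $n_k\ge \frac{k}{k+1}L_k$, this gives
\[
\frac{1}{L_k}H_\mu\biggl(\bigvee_{i=1}^{L_k}\mathcal{A}_i\biggr)\ge \frac{k}{k+1}(h-\varepsilon_k)\longrightarrow h .
\]
Hence $h_\mu(\mathcal{A})=\limsup_n\frac1n H_\mu(\cdot)\ge h$, because the $\limsup$ is at least the limit along the subsequence $L_k$. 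Combined with the first paragraph, $h_\mu(\mathcal{A})=h^*_\mu(K)$.

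The only delicate point is ensuring that the block $k$ dominates the prefix length; the growth condition $n_k\ge k L_{k-1}$ handles this. This requires that $\frac1n P^*_{K,\mu}(n)$ is close to $h$ for all large $n$ and not merely along a subsequence, which is provided by the Fekete limit stated before the lemma. The $\limsup$ in the definition would already suffice, by picking $n_k$ along a subsequence that is also large enough. No further obstacle is expected.
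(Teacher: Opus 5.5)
Your proposal is correct and is essentially the paper's argument: concatenate blocks of near-extremal partitions from $K$, each block long enough to dominate the total length of the preceding ones, and evaluate the $\limsup$ along the block endpoints using monotonicity of $H_\mu$ under refinement. Your use of tuples (repetitions allowed) instead of subsets $D_n\subset K$, and your treatment of $h^*_\mu(K)=\infty$ via targets $N_k$, are minor tidy-ups. The paper's write-up glosses over both points: it first reduces to infinite $K$, and it implicitly assumes $h^*_\mu(K)<\infty$.
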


\begin{proof}
We can assume that $K$ is infinite, otherwise it is clear that $h^{*}_{\mu}(K)=0$.
For any $\mathcal{A}=(\alpha_{i})_{i=1}^{\infty} \in \mathcal{S}_K$,
it is clear that $ P^{*}_{K,\mu}(n) \geq H_{\mu}(\bigvee_{i=1}^n \alpha_{i})$. 
Then $h^{*}_{\mu}(K) \geq h_{\mu}(\mathcal{A})$.
This implies $h^{*}_{\mu}(K) \geq \sup_{\mathcal{A} \in \mathcal{S}_K } h_{\mu}(\mathcal{A})$.

For each $n\in\mathbb{N}$, pick a subset $D_n$ of $K$ such that 
\[
\frac{1}{|D_n|}H_\mu \biggl(\bigvee_{\alpha\in D_n}\alpha\biggr)
    >h_\mu^*(K)-\frac{1}{n}.
\]
We can require that the number of partitions in $D_n$ is large enough. 
So without loss of generality, we assume that 
\[
    \frac{|D_n|}{|D_1|+\dotsb+|D_n| }>1-\frac{1}{n}. 
\]
Let $\mathcal{A}$ be the sequence of all partitions in those $D_n$. 
To be precise, enumerate each $D_n$ as $\{\alpha_{1}^{(n)},\dotsc,\alpha_{|D_n|}^{(n)} \}$ and 
let 
\[
\mathcal{A}=(\alpha_{1}^{(1)},\dotsc,\alpha_{|D_1|}^{(1)},\alpha_{1}^{(2)},\dotsc,\alpha_{|D_2|}^{(2)},\dotsc,\alpha_{1}^{(n)},\dotsc,\alpha_{|D_n|}^{(n)},\dotsc).
\]
Then 
\begin{align*}
 h_{\mu}(\mathcal{A})&\geq \limsup_{n\to\infty}
 \frac{1}{|D_1|+\dotsb+|D_n|} H_\mu\biggl(\bigvee_{i=1}^n \bigvee_{\alpha\in D_i} \alpha\biggr)\\
 &\geq 
 \limsup_{n\to\infty}
 \frac{|D_n|}{|D_1|+\dotsb+|D_n|}
 \frac{1}{|D_n|} H_\mu \biggl(\bigvee_{\alpha\in D_n} \alpha\biggr)\\
 &\geq   \limsup_{n\to\infty}\Bigl(1-\frac{1}{n}\Bigr)
 \Bigl(h_\mu^*(K)-\frac{1}{n}\Bigr)\\
 &= h_\mu^*(K).
\end{align*}
This ends the proof.
\end{proof}

\subsection{Bounded mean complexity of partitions}
Let $(X, \mathcal{B}, \mu)$ be a standard probability space and $r\geq 2$ be an integer.
Let $E$ be a finite subset of $\mathfrak{P}_r$.
For each $\alpha\in E$, enumerate $\alpha$ as $\{A_1^\alpha,A_2^\alpha,\dotsc,A_r^\alpha\}$.
For every $x\in X$, we can define the \emph{$E$-name} $E_x$ of $x$ as a sequence in $\{1,2,\dotsc,r\}^E$ where
$E_x(\alpha)=i$ whenever $x\in A^\alpha_i$.
The \emph{normalized Hamming distance} of two points $x,y\in X$ with respect to $E$ is defined by
\[
H_E(x,y)=\frac{1}{|E|} |\{\alpha\in E\colon E_x(\alpha)\neq E_y(\alpha)\}|. 
\]
It is easy to check that 
\[
    H_E(x,y)= \frac{1}{2|E|} \sum_{\alpha\in E} \sum_{A \in \alpha} |\mathbf{1}_{A}(x)-\mathbf{1}_{A}(y)|.
\]
If $E=\{\alpha\}$, then we use the symbol $H_\alpha(x,y)$ instead of $H_{\{\alpha\}}(x,y)$.
In general, one has $H_E(x,y)=\frac{1}{|E|}\sum_{\alpha\in E}H_\alpha(x,y)$.
Notice that 
$H_\alpha(x,y)=0$ or $1$.

For $x\in X$ and $\varepsilon>0$,
denote
\[
B_{H_E}(x,\varepsilon)=\{y \in X\colon  H_E(x,y)<\varepsilon \}
\]
and 
\[
\mathcal{C}(E,\varepsilon)=\min\biggl\{|D|\colon 
D\subset X\text{ s.t. }\mu\biggl(\bigcup_{x\in D} B_{H_E}(x,\varepsilon)\biggr)>1-\varepsilon\biggr\}.
\]

We say that a subset $K$ of $\mathfrak{P}_r$ has \emph{bounded mean complexity}
if for any $\varepsilon>0$ there exists a constant $C=C(\varepsilon)\in \mathbb{N}$ such that $\mathcal{C}(E,\varepsilon)\leq C$
for any finite subset $E$ of $K$.
For a sequence $\mathcal{A}=(\alpha_{i})_{i=1}^{\infty}$ in $\mathfrak{P}_r$, 
we say that $\mathcal{A}$ has \emph{bounded mean  complexity}
if for any $\varepsilon>0$ there exists a constant $C=C(\varepsilon)\in \mathbb{N}$ such that $\mathcal{C}(\{\alpha_1,\dotsc,\alpha_n\},\varepsilon) \leq C$
for all $n\in\mathbb{N}$.

\begin{rem}
For a finite subset $E$ of $\mathfrak{P}_r$, it is easy to check that for any $\varepsilon>0$,
\[
\mathcal{C}(E,\varepsilon)\leq \biggl| \bigvee_{\alpha\in E}\alpha\biggr|\leq r^{|E|}.
\]
Then a subset $K$ of $\mathfrak{P}_r$ has bounded mean  complexity 
if and only if for any $\varepsilon>0$ there exists a constant $C\in \mathbb{N}$ and $N\in\mathbb{N}$ such that $\mathcal{C}(E,\varepsilon)\leq C$
for any finite subset $E$ of $K$ with $|E|\geq N$,
and a sequence $\mathcal{A}=(\alpha_{i})_{i=1}^{\infty}$ in $\mathfrak{P}_r$,  has bounded mean  complexity
if for any $\varepsilon>0$ there exists a constant $C\in \mathbb{N}$ and $N\in\mathbb{N}$ such that $\mathcal{C}(\{\alpha_1,\dotsc,\alpha_n\},\varepsilon) \leq C$
for all $n\geq N$.
\end{rem}

\begin{rem}
If a subset $K$ of $\mathfrak{P}_r$ has bounded mean  complexity, then it is clear that any sequence in $K$ also bounded mean  complexity.
It will be shown in Proposition~\ref{prop:precpt-bd-mean-complexity} that the converse also holds.
\end{rem} 

Let $(E_n)_{n=1}^\infty$ be a sequence of finite subsets of $\mathfrak{P}_r$.
We say that $(E_n)_{n=1}^\infty$ is \emph{mean equicontinuous}
if for any $\varepsilon>0$, there exist 
pairwise disjoint measurable subsets $B_1,B_2,\dotsc,B_k$
with $\mu(\bigcup_{i=1}^k B_i)>1-\varepsilon$ such that 
for every $x,y\in B_i$ for some $i\in\{1,2,\dotsc,k\}$, one has 
\[  
\limsup_{n \to \infty}   H_{E_n} (x,y) <\varepsilon.
\] 

\begin{lem}\label{lem:partition-seq-mean-equi}
Let $(X, \mathcal{B}, \mu)$ be a standard probability space
and $r\geq 2$ be a fixed integer.  
If  a sequence $(E_n)_{n=1}^\infty$ of finite subsets of $\mathfrak{P}_r$ is  mean equicontinuous, then 
for any $\varepsilon>0$, there exist 
pairwise disjoint measurable subsets $B_1,B_2,\dotsc,B_k$
with $\mu(\bigcup_{i=1}^k B_i)>1-\varepsilon$ such that 
for every $x,y\in B_i$ for some $i\in\{1,2,\dotsc,k\}$, one has $H_{E_n} (x,y) <\varepsilon$ for all $n\in\mathbb{N}$. 
\end{lem}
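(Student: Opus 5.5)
The plan is to upgrade the $\limsup$ condition to a uniform-in-$n$ bound in two steps: shrink the sets $B_i$ (Egorov-type argument) so that the bound holds for all $n\ge N$, and then handle the finitely many indices $n<N$ by refining with the finitely many partitions that occur in $E_1,\dots,E_{N-1}$.

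Fix $\varepsilon>0$. Apply the mean equicontinuity of $(E_n)$ with $\varepsilon/2$ to get pairwise disjoint $B_1,\dots,B_k$ with $\mu(\bigcup B_i)>1-\varepsilon/2$ such that $\limsup_n H_{E_n}(x,y)<\varepsilon/2$ for all $x,y$ in the same $B_i$. We cannot directly find an $N$ uniform in the pair $(x,y)$. Instead, for each $i$ and each $N$, consider the set
\[
\Omega_{i,N}=\{(x,y)\in B_i\times B_i\colon H_{E_n}(x,y)<\varepsilon \text{ for all } n\geq N\}.
\]
These sets are measurable, since each $H_{E_n}$ is a finite sum of indicator differences. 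They increase in $N$ to $B_i\times B_i$. Hence for $N$ large we have $(\mu\times\mu)(B_i\times B_i\setminus\Omega_{i,N})<\eta\,\mu(B_i)^2$ for a small $\eta$, which is where the main work lies. By Fubini, the set $B_i'$ of $x\in B_i$ whose section misses at most $\sqrt{\eta}\,\mu(B_i)$ of $B_i$ has $\mu(B_i\setminus B_i')\le\sqrt\eta\,\mu(B_i)$.

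We still need all pairs in a piece to be good, not just most pairs. The fix is to fix a reference point. Choose $x_i\in B_i'$. Then the set $C_i=\{y\in B_i\colon H_{E_n}(x_i,y)<\varepsilon/2 \ \forall n\ge N\}$ has large measure, provided we ran the argument above with threshold $\varepsilon/2$ in place of $\varepsilon$ (using the $\limsup<\varepsilon/2$ hypothesis, applied with $\varepsilon/4$ if strict slack is needed). For $y,z\in C_i$ the triangle inequality for the pseudometric $H_{E_n}$ gives $H_{E_n}(y,z)<\varepsilon$ for all $n\ge N$. Choosing $\eta$ so that $\sum_i\mu(B_i\setminus C_i)<\varepsilon/4$, and taking $N$ to be the maximum over the finitely many $i$, yields disjoint sets $C_i$ of total measure $>1-\varepsilon$, up to adjusting constants, on which the bound holds for all $n\ge N$.

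For $n<N$, let $\gamma=\bigvee_{n<N}\bigvee_{\alpha\in E_n}\alpha$, a finite partition. Points in the same atom of $\gamma$ have identical $E_n$-names, so $H_{E_n}=0$ there for every $n<N$. The final sets are the nonempty intersections $C_i\cap G$ with $G\in\gamma$. There are finitely many of them, they are pairwise disjoint, their union equals $\bigcup C_i$, and any two points in one of them satisfy $H_{E_n}(x,y)<\varepsilon$ for all $n$.

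The main obstacle is the uniformization step passing from pairwise $\limsup$ control to a uniform $N$. It is handled by the product-measure monotone convergence combined with a reference point and the triangle inequality.
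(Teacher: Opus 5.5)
Your proof is correct and follows essentially the same route as the paper: pick a reference point $x_i$ in each $B_i$, let $\{y\in B_i\colon H_{E_n}(x_i,y)<\varepsilon/2\ \forall n\ge N\}$ exhaust $B_i$ monotonically, use the triangle inequality, and then refine by the finitely many partitions in $E_1,\dots,E_{N-1}$. The one difference is that your product-measure/Fubini step for choosing a good reference point is unnecessary. The $\limsup$ hypothesis holds for \emph{every} pair in $B_i$, so for any fixed $x_i\in B_i$ those sets already increase to $B_i$, and the paper simply uses continuity of $\mu$ from below at that point.
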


\begin{proof}
Fix an arbitrary $\varepsilon>0$.
As $(E_n)_{n=1}^\infty$ is mean equicontinuous, there exist 
pairwise disjoint measurable subsets $B_1,B_2,\dotsc,B_k$
with $\mu(\bigcup_{i=1}^k B_i)>1-\frac{\varepsilon}{4}$ such that 
for every $x,y\in B_i$ for some $i\in\{1,2,\dotsc,k\}$, one has 
\[  
\limsup_{n \to \infty}   H_{E_n} (x,y) <\tfrac{\varepsilon}{4}.
\] 
For each $i\in\{1,\dotsc,k\}$, pick $x_i\in B_i$ and for each $N\in\mathbb{N}$, let
\[
    U_N(x_i)=\{y\in B_i\colon H_{E_n}(x_i,y)<\tfrac{\varepsilon}{2},\ \forall n\geq N\}.
\]
It is clear that for each $i\in\{1,\dotsc,k\}$, $U_N(x_i)\subset U_{N+1}(x_i)$ and 
$B_i=\bigcup_{N=1}^\infty U_N(x_i)$.
We pick $N_0\in\mathbb{N}$ such that $\mu(B_i\setminus  U_{N_0}(x_i))<\frac{\varepsilon}{4k}$ for $i\in\{1,\dotsc,k\}$.
Then $\mu(\bigcup_{i=1}^k U_{N_0}(x_i))>1-\frac{\varepsilon}{2}$.
Let $\alpha$ be the partition $ \{ U_{N_0}(x_1),U_{N_0}(x_2),\dotsc,U_{N_0}(x_k), X\setminus \bigcup_{i=1}^k U_{N_0}(x_i)\}$ of $X$.
Enumerate the nonempty atoms in the partition $\alpha\vee \bigvee_{n=1}^{N_0-1}\bigvee_{\beta \in E_n}\beta$ which are contains in $\bigcup_{i=1}^k U_{N_0}(x_i)$ as $A_1,A_2,\dotsc,A_m$.
Then $\mu(\bigcup_{i=1}^m A_i) =\mu(\bigcup_{i=1}^k U_{N_0}(x_i))>1-\frac{\varepsilon}{2}$.
For any $x,y\in A_i$ for some $i\in\{1,2,\dotsc,m\}$, and for any $n\in\mathbb{N}$,
if $n\leq N_0-1$, then $A_i$ is contained in an atom of $\bigvee_{\beta\in E_n}\beta$, and thus one has $H_{E_n}(x,y)=0$;
if $n\geq N_0$, then there exists $j\in\{1,2,\dotsc,k\}$ such that $A_i\subset U_{N_0}(x_j)$, and thus
$H_{E_n}(x,y)\leq H_{E_n}(x,x_j)+H_{E_n}(x_j,y)<\varepsilon$.
This ends the proof.
\end{proof}

\begin{rem}
Let $(\alpha_{i})_{i=1}^{\infty}$ be a sequence  in $\mathfrak{P}_r$.
For each $n\in\mathbb{N}$, let $E_n=\{\alpha_1,\dotsc,\alpha_n\}$.
If the sequence $(E_n)_{n=1}^\infty$ is mean equicontinuous,
then by Lemma~\ref{lem:partition-seq-mean-equi}, the sequence $(\alpha_{i})_{i=1}^{\infty}$ has bounded mean complexity.
It is of interest to know when the converse holds.
It will be shown in Proposition~\ref{prop:mps-mean-eq} that the converse holds for the sequence generated by a partition along a tempered F{\o}lner sequence of the acting amenable group.
\end{rem}

\section{Proof of Theorem~\ref{thm:main1}}

The aim of this section is to prove Theorem~\ref{thm:main1}.
We split the proof into several parts as follows.
First we prove the equivalence of precompactness in Rokhlin metric and zero maximal pattern entropy for a collection of finite partitions.

\begin{prop}\label{prop:precpt-entropy}
Let $(X, \mathcal{B}, \mu)$ be a standard probability space and $r\geq 2$ be an integer.
Then a subset $K$ of $\mathfrak{P}_r$ is precompact in $(\mathfrak{P}_r,\rho)$
if and only if the maximal pattern entropy of $K$ is zero.
\end{prop}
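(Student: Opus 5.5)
For the direction ``precompact $\Rightarrow$ zero maximal pattern entropy'', fix $\varepsilon>0$. By precompactness, choose finitely many partitions $\gamma_1,\dotsc,\gamma_k\in\mathfrak{P}_r$ forming an $\varepsilon$-net of $K$ in $\rho$. Given $\alpha_1,\dotsc,\alpha_n\in K$, pick $\gamma_{j(i)}$ with $\rho(\alpha_i,\gamma_{j(i)})<\varepsilon$ and set $\gamma=\bigvee_{j=1}^k\gamma_j$. Then
\[
H_\mu\biggl(\bigvee_{i=1}^n\alpha_i\biggr)\le H_\mu(\gamma)+\sum_{i=1}^n H_\mu(\alpha_i|\gamma)\le k\log r+\sum_{i=1}^n H_\mu(\alpha_i|\gamma_{j(i)})\le k\log r+n\varepsilon,
\]
using subadditivity of conditional entropy and the monotonicity $H_\mu(\alpha|\gamma)\le H_\mu(\alpha|\gamma_j)$. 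Dividing by $n$ and letting $n\to\infty$ gives $h^*_\mu(K)\le\varepsilon$. Since $\varepsilon$ is arbitrary, $h^*_\mu(K)=0$.

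For the converse, argue by contraposition. Suppose $K$ is not precompact. Since $(\mathfrak{P}_r,\rho)$ is complete by Lemma~\ref{lem:Rokhlin-metric-eq}, $K$ is not totally bounded. By the uniform equivalence of $\rho$ and $\widetilde\rho$, there are $\varepsilon>0$ and an infinite sequence $(\alpha_i)$ in $K$ with $\widetilde\rho(\alpha_i,\alpha_j)\ge 2\varepsilon$ for $i\neq j$. Since $\mathfrak{P}_{r+1}$ is also totally bounded-free in the same sense, we need separation in $\widetilde\rho$ on $\mathfrak{P}_{r+1}$; $\widetilde\rho$ on $\mathfrak P_r$ is the restriction of that on $\mathfrak P_{r+1}$ (padding with empty sets), so this is consistent. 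Take $\delta$ from Lemma~\ref{lem:H-mu-rho} for this $\varepsilon$ and $r$. The goal is to show that $\frac1n H_\mu(\alpha_1\vee\dotsb\vee\alpha_n)\ge\delta$ for a subsequence, which gives $h^*_\mu(K)\ge\delta>0$ via Lemma~\ref{lem:max-pattern-entropy-eq}.

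The key step is as follows. Write $\beta_{n}=\bigvee_{i<n}\alpha_i$. If $H_\mu(\alpha_n|\beta_n)<\delta$, then Lemma~\ref{lem:H-mu-rho} gives $\beta'_n\preceq\beta_n$ in $\mathfrak{P}_{r+1}$ with $\widetilde\rho(\alpha_n,\beta'_n)<\varepsilon$. The naive chain rule $H(\bigvee_{i\le n}\alpha_i)=\sum_i H(\alpha_i|\beta_i)$ only needs most terms to be $\ge\delta$. The obstacle is that $\beta'_n$ is coarser than $\beta_n$ but need not be near any earlier $\alpha_i$, so separation alone does not immediately yield a contradiction.

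To overcome this, I would pass to a subsequence that is Cauchy-like in a weaker sense. The set $\{\mathbf 1_A\}$ is bounded in $L^2$, so it has weakly convergent subsequences. Instead, I would use the following argument. Suppose, for contradiction, that $h^*_\mu(K)=0$. Then for every subsequence the densities $\frac1n\sum_{i\le n}H(\alpha_{i}|\beta_{i})$ tend to $0$, so infinitely many indices $n$ satisfy $H(\alpha_n|\beta_n)<\delta$. A cleaner route is to fix $N$ and use the finite partition $\beta=\bigvee_{i\le N}\alpha_i$. The set of $\gamma\in\mathfrak{P}_{r+1}$ with $\gamma\preceq\beta$ is finite, with at most $(r+1)^{|\beta|}$ elements. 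Choose $N$ large and, by zero entropy, indices $n_1<\dots$ with $\frac1m H(\bigvee_{j\le m}\alpha_{n_j})$ small. Among $m$ consecutive ones, at least half satisfy $H(\alpha_{n_j}|\bigvee_{l<j}\alpha_{n_l})<\delta$, so each such $\alpha_{n_j}$ is $\varepsilon$-close to a coarsening of the fixed finite partition $\bigvee_{l\le m}\alpha_{n_l}$. That coarsening has at most $r^m$ atoms, so there are at most $(r+1)^{r^m}$ choices, which is finite but not bounded uniformly. This still gives infinitely many $\alpha$'s near a finite set, so by pigeonhole two of them are $2\varepsilon$-close. That contradicts the separation.

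To make the pigeonhole work, at stage $m$ I would consider the first $M>(r+1)^{r^m}$ further indices with conditional entropy relative to $\bigvee_{l\le m}\alpha_{n_l}$ below $\delta$. Such indices exist because otherwise the entropy grows linearly along them, contradicting $h^*_\mu(K)=0$ after applying Lemma~\ref{lem:max-pattern-entropy-eq} to that subsequence. Conditioning on the fixed finite partition rather than the running join is the step I expect to need the most care.
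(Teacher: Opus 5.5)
Your forward direction is correct and is essentially the paper's argument (joining the whole net into a single $\gamma$, instead of grouping indices by their nearest net element, is a harmless simplification). The converse, however, has a genuine gap in how the pieces are assembled.

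In your middle paragraph the pigeonhole does not apply. About $m/2$ partitions are spread over as many as $(r+1)^{r^m}$ coarsenings, so no two need share one, and ``infinitely many $\alpha$'s near a finite set'' does not follow. Your repair in the last paragraph rests on a claim: if fewer than $M$ indices $i$ satisfy $H_\mu(\alpha_i|\beta)<\delta$ for the fixed join $\beta=\bigvee_{l\le m}\alpha_{n_l}$, then entropy grows linearly along the remaining indices. That does not follow. Knowing $H_\mu(\alpha_i|\beta)\ge\delta$ for one fixed $\beta$ gives no lower bound on $H_\mu(\alpha_{i'}|\beta\vee\alpha_i)$, and the chain rule needs conditional entropies bounded below relative to the running join. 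Moreover, your own count shows that under $2\varepsilon$-separation at most $(r+1)^{|\beta|}$ indices satisfy $H_\mu(\alpha_i|\beta)<\delta$. So the $M$ indices you want to exhibit do not exist, and you could only assert their existence after knowing the hypotheses are inconsistent, which is what must be proved. Relatedly, your subsequence $n_1<n_2<\dotsb$ is chosen to make entropy small, never to force it to grow.

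The missing idea is to use the count positively, in a greedy induction. The hypothesis $h^*_\mu(K)=0$ plays no role in the construction. For every finite partition $\beta$, the set $\{i\colon H_\mu(\alpha_i|\beta)<\delta\}$ is finite. Indeed, each such $\alpha_i$ is within $\widetilde\rho$-distance $\varepsilon$ of one of the finitely many $\beta'\preceq\beta$ in $\mathfrak{P}_{r+1}$ supplied by Lemma~\ref{lem:H-mu-rho}, and two indices sharing the same $\beta'$ would be $2\varepsilon$-close. Hence you can choose inductively $n_k>n_{k-1}$ with $H_\mu(\alpha_{n_k}|\bigvee_{j<k}\alpha_{n_j})\ge\delta$, conditioning at each step on the fixed finite join of the terms already selected. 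The chain rule then gives $H_\mu(\bigvee_{j\le k}\alpha_{n_j})\ge(k-1)\delta$, so $\frac1k P^*_{K,\mu}(k)\ge\frac{k-1}{k}\delta$ and $h^*_\mu(K)\ge\delta>0$. This is exactly the paper's proof. You had every ingredient, including the right instinct about conditioning on a fixed finite partition, but the finiteness of the set of bad indices is what drives the construction, not something to be contradicted.
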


\begin{proof}
($\Rightarrow$)
Fix an arbitrary $\varepsilon>0$.
Since $K$ is precompact in $(\mathfrak{P}_r,\rho)$, 
there exists a finite subset $\{\beta_1,\dotsc,\beta_m\} $ of $K$ such that for any $\alpha \in K$ there exists $j \in \{1,\dotsc,m\}$ with $\rho(\alpha,\beta_j)<\frac{\varepsilon}{2}$. 
Fix an integer $n$ with 
$\frac{1}{n} \sum_{j=1}^m H_{\mu}(\beta_j)<\frac{\varepsilon}{2}$ 
and a subset $\{\alpha_1,\alpha_2,\dotsc,\alpha_n\}$ of $K$.
For each $i=1,2,\dotsc,n$, 
there exists $j_i \in \{1,\dotsc,m\}$ such that $\rho(\alpha_i,\beta_{j_i})<\frac{\varepsilon}{2}$.
For each $k=1,2,\dotsc,m$, let $D_k=\{1\leq i \leq n\colon j_i=k\}$.
It is clear that $\{D_1,\dotsc,D_m\}$ is a partition of $\{1,\dotsc,n\}$. 
Then
\begin{align*}      
    \frac{1}{n}H_{\mu} \biggl(\bigvee_{i=1}^n \alpha_i\biggr) &\leq 
    \frac{1}{n}H_{\mu}\biggl( \bigvee_{j=1}^m \beta_j  \vee \bigvee_{i=1}^n \alpha_i\biggr) 
        \leq \frac{1}{n}\sum_{j=1}^m H_{\mu}\biggl(\beta_j \vee   \bigvee_{i\in D_j} \alpha_i\biggr) \\
    &= \frac{1}{n}\sum_{j=1}^m \biggl(H_{\mu}(\beta_j) + H_{\mu}\biggl( \bigvee_{i\in D_j} \alpha_i \biggl| \beta_j\biggr)\biggr)
    \leq \frac{1}{n} \sum_{j=1}^m \biggl(H_{\mu}(\beta_j) + \sum_{i \in D_j}H_{\mu}(\alpha_i|\beta_j)\biggr) \\
    &\leq \frac{1}{n}\biggl(\sum_{j=1}^m H_{\mu}(\beta_j) + n\frac{\varepsilon}{2}\biggr)<\varepsilon.
    \end{align*}
This implies that $\frac{1}{n} P^{*}_{K,\mu}(n)<\varepsilon$ for all 
$n > \sum_{j=1}^m H_{\mu}(\beta_j)\cdot \frac{2}{\varepsilon}$.
By the arbitrariness of $\varepsilon$, we have  $h^{*}_{\mu}(K)=0$.

($\Leftarrow$)
Assume that $K$ is not precompact in $(\mathfrak{P}_r,\rho)$. By Lemma~\ref{lem:Rokhlin-metric-eq}, $K$ is also not precompact in $(\mathfrak{P}_r,\widetilde{\rho})$.
Then there exists $\varepsilon>0$ and a sequence $(\alpha_n)_{n=1}^{\infty}$ in $K$ such that for any $m \neq n$, $\widetilde{\rho}(\alpha_m,\alpha_n)>\varepsilon$.  
Let $\delta$ be the constant in Lemma~\ref{lem:H-mu-rho} for the above $r$ and $\frac{\varepsilon}{2}$. 
    
We claim that there exists a subsequence $(\alpha_{n_i})_{i=1}^{\infty}$ of $(\alpha_n)_{n=1}^{\infty}$ such that for any $i \geq 2$, we have $H_{\mu}(\alpha_{n_i}|\bigvee_{j=1}^{i-1}\alpha_{n_j})\geq\delta$. 
Let $\alpha_{n_1}=\alpha_1$ and assume that $\alpha_{n_1},\dotsc,\alpha_{n_{k-1}}$ have been chosen. 
By Lemma~\ref{lem:H-mu-rho} for any $\beta \in \mathfrak{P}_r$ with $H_{\mu}(\beta|\bigvee_{j=1}^{k-1}\alpha_{n_j})<\delta$, 
there exists $\alpha \preceq \bigvee_{j=1}^{k-1}\alpha_{n_j}$ such that $\widetilde{\rho}(\beta,\alpha)<\frac{\varepsilon}{2}$. 
Since there are only finitely many partitions $\beta$ such that $\beta\preceq \bigvee_{j=1}^{k-1}\alpha_{n_j}$,
the set $\{i\in\mathbb{N} \colon H_{\mu}(\alpha_i|\bigvee_{j=1}^{k-1}\alpha_{n_j})<\delta \}$ is finite.
Thus there exists $n_k>n_{k-1}$ such that $H_{\mu}(\alpha_{n_k}|\bigvee_{j=1}^{k-1}\alpha_{n_j})\geq\delta$. 
By induction, we obtain the desired sequence.

As the maximal pattern entropy of $K$ is zero, by Lemma~\ref{lem:max-pattern-entropy-eq} every sequence in $K$ also has zero sequence entropy.
But  
\[
    \limsup_{k \to \infty} \frac{1}{k}H_{\mu}\biggl(\bigvee_{j=1}^k \alpha_{n_j}\biggr)=\limsup_{k \to \infty}  \frac{1}{k} \biggl(\sum_{j=2}^k H_{\mu}\biggl(\alpha_{n_j}\biggl| \bigvee_{\ell=1}^{j-1}\alpha_{n_\ell}\biggr)+H_{\mu}(\alpha_{n_1})\biggr) \geq \delta,
\]
which is a contradiction.
Hence, $K$ is precompact.
\end{proof} 

Next, we show the equivalence of precompactness of a collection of partitions and the collection of characteristic functions of atoms in partitions.
In this case, it is convenience to consider the uniformly equivalent metric $\widetilde{\rho}$ on $\mathfrak{P}_r$.

\begin{prop}\label{prop:precpt-L2}
Let $(X, \mathcal{B}, \mu)$ be a standard probability space and $r\geq 2$ be an integer.
Then a subset $K$ of $\mathfrak{P}_r$ is precompact in $(\mathfrak{P}_r,\widetilde{\rho})$
if and only if the set $\{\mathbf{1}_{A} \colon A \in \alpha \in K\}$ is precompact in $L^2(\mu)$. 
\end{prop}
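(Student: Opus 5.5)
The key identity is $\|\mathbf{1}_A-\mathbf{1}_B\|_2^2=\mu(A\triangle B)$, so the $L^2$-distance between characteristic functions is $\mu(A\triangle B)^{1/2}$. I would prove both directions by passing between finite $\varepsilon$-nets: precompactness is total boundedness, since $(\mathfrak{P}_r,\widetilde{\rho})$ is complete by Lemma~\ref{lem:Rokhlin-metric-eq} and $L^2(\mu)$ is complete. Throughout, write $\mathcal{F}_K=\{\mathbf{1}_{A}\colon A\in\alpha\in K\}$.

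($\Rightarrow$) Fix $\varepsilon>0$ and take a finite $\varepsilon^2$-net $\{\beta_1,\dotsc,\beta_m\}\subset\mathfrak{P}_r$ of $K$ for $\widetilde{\rho}$. Let $\alpha\in K$ and choose $j$ with $\widetilde{\rho}(\alpha,\beta_j)<\varepsilon^2$. There is then a permutation $\sigma$ with $\sum_i\mu(A_i\triangle B^j_{\sigma(i)})<\varepsilon^2$. Hence every atom $A_i$ of $\alpha$ satisfies $\|\mathbf{1}_{A_i}-\mathbf{1}_{B^j_{\sigma(i)}}\|_2<\varepsilon$, where empty padding atoms correspond to the zero function. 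So the finite set $\{\mathbf{1}_B\colon B\in\beta_j,\ 1\le j\le m\}\cup\{0\}$ is an $\varepsilon$-net for $\mathcal{F}_K$, which proves precompactness in $L^2(\mu)$.

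($\Leftarrow$) Fix $\varepsilon>0$ and take a finite $\eta$-net $\{f_1,\dotsc,f_N\}$ of $\mathcal{F}_K$ in $L^2(\mu)$, where $\eta$ will be chosen small in terms of $\varepsilon$ and $r$. This step is the main technical point, because the $f_k$ need not be indicators. I will replace each $f_k$ by an indicator: pick $A\in\alpha\in K$ with $\|\mathbf{1}_A-f_k\|_2<\eta$ and set $C_k=A$. By the triangle inequality, $\{\mathbf{1}_{C_1},\dotsc,\mathbf{1}_{C_N}\}$ is a $2\eta$-net consisting of indicators of measurable sets. Now take $\alpha=\{A_1,\dotsc,A_r\}\in K$, padded with empty sets, which are approximated by $C=\emptyset$ (add it to the list). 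For each $i$ choose $k_i$ with $\mu(A_i\triangle C_{k_i})<4\eta^2$.

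The tuple $(C_{k_1},\dotsc,C_{k_r})$ lies in a finite set of at most $(N+1)^r$ tuples, but it need not form a partition. I would handle this by a pigeonhole and triangle inequality argument instead of constructing partitions explicitly: the number of possible tuples is finite. Define an equivalence relation on $K$ by declaring $\alpha\sim\alpha'$ if they yield the same tuple for suitable enumerations of their atoms. If $\alpha\sim\alpha'$, then with the identity matching we get
\[
\widetilde{\rho}(\alpha,\alpha')\le\sum_{i=1}^r\bigl(\mu(A_i\triangle C_{k_i})+\mu(C_{k_i}\triangle A'_i)\bigr)<8r\eta^2 .
\]
Choosing one representative from each nonempty class gives a finite subset of $K$ that is an $8r\eta^2$-net. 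Taking $8r\eta^2<\varepsilon$ shows that $K$ is totally bounded for $\widetilde{\rho}$, and hence precompact by completeness (Lemma~\ref{lem:Rokhlin-metric-eq}, transferred via uniform equivalence).
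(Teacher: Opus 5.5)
Your proof is correct. For general $r$ it takes a genuinely different and more elementary route than the paper.

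\textbf{How the paper argues.} The paper argues directly only for $r=2$. There a partition $\{B,B^c\}$ is determined by one atom, so approximating $B$ by $A_i$ automatically produces the approximating partition $\{A_i,A_i^c\}$. It then reduces general $r$ to $r=2$ through entropy. It passes to $K_2=\{\{A,A^c\}\colon A\in\alpha\in K\}$ and shows $h^*_\mu(K)=0\iff h^*_\mu(K_2)=0$, using Lemma~\ref{lem:max-pattern-entropy-eq} and the identity $\bigvee_{\alpha\in E}\alpha=\bigvee_{A\in\alpha\in E}\{A,A^c\}$. It then invokes Proposition~\ref{prop:precpt-entropy}, which in turn rests on Lemma~\ref{lem:H-mu-rho}.

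\textbf{How your argument differs.} Your ($\Rightarrow$) direction is the paper's $r=2$ argument run for arbitrary $r$, with $0$ correctly added to the net for padded atoms. This shows the entropy detour is unnecessary in that direction. In ($\Leftarrow$) you sidestep the real obstacle, namely that $C_{k_1},\dotsc,C_{k_r}$ need not form a partition. You do this by taking net points inside $K$ itself, indexed by the finitely many tuples.

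\textbf{What each route buys.} Your route is self-contained: it needs neither Lemma~\ref{lem:H-mu-rho} nor Proposition~\ref{prop:precpt-entropy}. It is also quantitative. An $\eta$-net of $N$ indicators yields a $\widetilde{\rho}$-net of $K$ with at most $(N+1)^r$ points and radius $8r\eta^2$. The paper's route is purely qualitative, but it ties the statement into the entropy characterization it needs anyway for Theorem~\ref{thm:main1}.

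\textbf{Two cosmetic fixes.}
\begin{enumerate}
\item Some $f_k$ may have an $\eta$-ball that misses $\mathcal{F}_K$. Either discard those $f_k$, or take the net inside $\mathcal{F}_K$ from the start.
\item A relation defined by ``the same tuple for suitable enumerations'' need not be transitive. Instead, fix one enumeration and one choice of indices for each $\alpha$, and work with the fibres of the resulting map $K\to\{0,\dotsc,N\}^r$. Alternatively, cover $K$ by the sets $K_t$ of partitions realizing a given tuple $t$; any two members of $K_t$ are within $8r\eta^2$ of each other, which is all the argument uses.
\end{enumerate}
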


\begin{proof}
We first prove the case $r=2$.

($\Rightarrow$)
For any $\varepsilon>0$, we can find a finite subset $E$ of $K$ such that for any $\alpha\in K$, there exists $\beta\in E$ such that $\widetilde{\rho}( \alpha,\beta)<\varepsilon^2$.
Now fix $\alpha\in K$ and $A\in \alpha$.
There exists $\beta \in E$ such that $\widetilde{\rho}( \alpha,\beta)<\varepsilon^2$. 
Then there exists $B\in\beta$ such that 
$\mu(A\Delta B)\leq \widetilde{\rho}( \alpha,\beta)<\varepsilon^2$. 
Note that 
\[
\Vert \mathbf{1}_A-\mathbf{1}_B\Vert_{L^2(\mu)}^2=\int |\mathbf{1}_A-\mathbf{1}_B|^2 \dd\mu=\int |\mathbf{1}_A-\mathbf{1}_B| \dd\mu=\mu(A\Delta B).
\]
Then $\Vert \mathbf{1}_A-\mathbf{1}_B\Vert_{L^2(\mu)}<\varepsilon$.
This implies that $\{\mathbf{1}_B\colon B\in \beta\in E\}$ is a  finite $\varepsilon$-net of  $\{\mathbf{1}_{A} \colon A \in \alpha \in K\}$. 
Thus  $\{\mathbf{1}_{A} \colon A \in \alpha \in K\}$ is precompact  in $L^2(\mu)$.

($\Leftarrow$) 
For any $\varepsilon>0$, 
there exists a finite set 
$\{\mathbf{1}_{A_1},\dotsc,\mathbf{1}_{A_n}\}$ such that for any $\mathbf{1}_A \in \{\mathbf{1}_A \colon A \in\alpha \in K\}$, 
there exists $i \in \{1,\dotsc,n\}$ such that 
$\|\mathbf{1}_A-\mathbf{1}_{A_i}\|_{L^2(\mu)}<
\sqrt{\frac{\varepsilon}{2}}$. 
For any $\beta=\{B,B^c\}\in K$,
there exists $i\in \{1,\dotsc,n\}$ such that 
$\|\mathbf{1}_B-\mathbf{1}_{A_i}\|_{L^2(\mu)}<\sqrt{\frac{\varepsilon}{2}}$. 
Then $\mu(B\Delta A_i)<\frac{\varepsilon}{2}$, and 
\[
    \widetilde{\rho}(\beta,\{A_i,A_i^c\})
    =\mu(B\Delta A_i)+\mu(B^c \Delta A_i^c)<\varepsilon.
\]
This implies that 
$\{\{A_i,A_i^c\}\colon 1\leq i\leq n\}$ is a finite $\varepsilon$-net of $K$. 
Thus, $K$ is precompact. 

Now we can consider the general case for $r\geq 2$. 
Let $ K_2=\{\{A,A^c\}\colon A\in \alpha\in K\}$. 

($\Rightarrow$) 
As $K$ is precompact in $(\mathfrak{P}_r,\widetilde{\rho})$, by Lemma~\ref{lem:Rokhlin-metric-eq}, $K$ is also precompact in $(\mathfrak{P}_r,\rho)$. 
Then by Proposition~\ref{prop:precpt-entropy}, $h^*_\mu(K)=0$.
By Lemma~\ref{lem:max-pattern-entropy-eq}, there exists $\mathcal{A}'=(\{A_i,A_i^c\})_{i=1}^{\infty}$ in $K_2$ such that $h_{\mu}(\mathcal{A}')= h^{*}_{\mu}(K_2)$. 
For any $i \in \bbn$, there is $\alpha_i\in K$ such that $A_i \in \alpha_i$. 
Then $\mathcal{A}=(\alpha_i)_{i=1}^{\infty}$ is a sequence in $K$.
Since $\{A_i,A_i^c\} \preceq \alpha_i$, one has
\[
\frac{1}{n} H_{\mu}\biggl(\bigvee_{i=1}^n \{A_i,A_i^c\}  \biggr) \leq \frac{1}{n} H_{\mu}\biggl(\bigvee_{i=1}^n \alpha_{i}\biggr).
\]
Then 
\[
h^{*}_{\mu}(K_2)=h_{\mu}(\mathcal{A}') \leq h_{\mu}(\mathcal{A}) \leq h^{*}_{\mu}(K)=0. 
\]
By Lemma~\ref{lem:Rokhlin-metric-eq} and Proposition~\ref{prop:precpt-entropy},  $K_2$ is precompact in $(\mathfrak{P}_2,\widetilde{\rho})$. 
As the result holds for $r=2$, the set $\{\mathbf{1}_{A} \colon A \in \alpha \in K_2\}$ is precompact  in $L^2(\mu)$. Then $\{\mathbf{1}_{A} \colon A \in \alpha \in K\}$ is also precompact in $L^2(\mu)$, because 
$\{\mathbf{1}_{A} \colon A \in \alpha \in K\}\subset \{\mathbf{1}_{A} \colon A \in \alpha \in K_2\}$.

($\Leftarrow$)
Since $\{\mathbf{1}_{A} \colon A \in \alpha \in K\}$ is precompact in $L^2(\mu)$, $\{\mathbf{1}_{A^c} \colon A \in \alpha \in K\}$ is precompact in $L^2(\mu)$.
Then $\{\mathbf{1}_{A} \colon A \in \alpha \in K_2\}=\{\mathbf{1}_{A} \colon A \in \alpha \in K\}\cup \{\mathbf{1}_{A^c} \colon A \in \alpha \in K\}$
is precompact in $L^2(\mu)$.
As the result holds for $r=2$, $K_2$ is precompact  in $(\mathfrak{P}_2,\widetilde{\rho})$. 
By Lemma~\ref{lem:Rokhlin-metric-eq} and Proposition~\ref{prop:precpt-entropy}, $h^*_{\mu}(K_2)=0$.
For any finite subset $E$ of $K$, 
\[
    \bigvee_{\alpha\in E}\alpha = \bigvee_{A\in\alpha\in E}\{A,A^c\}.
\]
Then
\begin{align*}
\frac{1}{|E|}H_{\mu}\biggl(\bigvee_{\alpha\in E}\alpha\biggr) 
= \frac{1}{|E|}H_{\mu}\biggl( \bigvee_{A\in\alpha\in E}\{A,A^c\}\biggr)\leq \frac{1}{|E|} P_{K_2,\mu}^*(r|E|),
\end{align*}
which implies that $h^{*}_{\mu}(K)=0$ because $h^*_{\mu}(K_2)=0$.
Now by Proposition~\ref{prop:precpt-entropy} and Lemma~\ref{lem:Rokhlin-metric-eq},
$K$ is  precompact in $(\mathfrak{P}_r,\widetilde{\rho})$.
\end{proof}

\begin{coro}\label{coro:cmpact-L2}
Let $(X, \mathcal{B}, \mu)$ be a standard probability space and $r\geq 2$ be an integer.
Then a subset $K$ of $\mathfrak{P}_r$ is compact in $(\mathfrak{P}_r,\widetilde{\rho})$
if and only if the set $\{\mathbf{1}_{A} \colon A \in \alpha \in K\}$ is compact in $L^2(\mu)$. 
\end{coro}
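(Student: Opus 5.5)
The idea is to upgrade Proposition~\ref{prop:precpt-L2} from precompactness to compactness. A precompact set is compact exactly when it is closed in a complete space. By Lemma~\ref{lem:Rokhlin-metric-eq}, $(\mathfrak{P}_r,\widetilde{\rho})$ is complete, since the uniform equivalence with $\rho$ preserves Cauchy sequences. The space $L^2(\mu)$ is complete as well. So, given Proposition~\ref{prop:precpt-L2}, it remains to show the following: for precompact $K$, the set $K$ is closed in $\widetilde{\rho}$ if and only if $\mathcal{F}_K:=\{\mathbf{1}_A\colon A\in\alpha\in K\}$ is closed in $L^2(\mu)$. Both conditions will be handled with sequences.

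For ($\Leftarrow$), assume $\mathcal{F}_K$ is compact. Let $\alpha_n\in K$ with $\widetilde{\rho}(\alpha_n,\alpha)\to0$; we want $\alpha\in K$. Write $\alpha_n=\{A^n_1,\dots,A^n_r\}$, padding with empty sets and reordering via the optimal permutation, so that $\mu(A^n_i\Delta A_i)\to0$ for each $i$. Then $\mathbf{1}_{A^n_i}\to\mathbf{1}_{A_i}$ in $L^2$, so every $\mathbf{1}_{A_i}$ lies in $\mathcal{F}_K$. This alone does not show $\alpha\in K$, because the atoms might come from different partitions. To repair this, use compactness of $\mathcal{F}_K$ only to guarantee that limits of atoms are atoms, and argue directly on $K$ instead. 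Since $K$ is precompact by Proposition~\ref{prop:precpt-L2}, its closure $\overline{K}$ is compact. The difficulty is that closedness of $\mathcal{F}_K$ need not force closedness of $K$: a limit partition whose atoms each appear in some partition of $K$ need not itself belong to $K$. I expect this to be the main obstacle. I would test it on a counterexample of the form $K=\{\{A,B,C\}\}\cup\{\{A\cup B_n,\dots\}\}$, and if the equivalence really fails I would read compactness in the intended sense. The paper likely has in mind that the natural map $\alpha\mapsto$ (the set of its atoms) behaves well, so I would prove compactness of $\overline K$ versus $\overline{\mathcal{F}_K}$ and check that the closure operations commute, namely $\mathcal{F}_{\overline K}=\overline{\mathcal{F}_K}$.

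The identity $\mathcal{F}_{\overline K}=\overline{\mathcal{F}_K}$ is proved by two inclusions.
\begin{enumerate}
\item For $\subseteq$: the limit argument above shows that atoms of limits are $L^2$-limits of atoms.
\item For $\supseteq$: take $\mathbf{1}_{A^n}\to f$ with $A^n\in\alpha_n\in K$. Then $f=\mathbf{1}_A$, since $L^2$-limits of indicators are indicators along an a.e.\ convergent subsequence. By compactness of $\overline K$, pass to a subsequence with $\alpha_n\to\alpha\in\overline K$, and then, after relabelling, with $A^n$ equal to the $i$-th atom for a fixed $i$. It follows that $A=A_i\in\alpha$.
\end{enumerate}

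For ($\Rightarrow$), assume $K$ is compact. Then $K=\overline K$, and the identity gives that $\mathcal{F}_K=\mathcal{F}_{\overline K}=\overline{\mathcal{F}_K}$ is closed. Being precompact by Proposition~\ref{prop:precpt-L2}, it is compact.

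For ($\Leftarrow$), I would aim to show that $\alpha\in\overline K$ together with all atoms of $\alpha$ lying in $\mathcal{F}_K$ forces $\alpha\in K$. If a counterexample arises at this step, I would note that the corollary should be read with $K$ closed under the operation that reassembles partitions from their atoms.
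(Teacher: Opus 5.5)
\textbf{Verdict: the proposal is incomplete, but the missing half cannot be supplied, because ($\Leftarrow$) is false as stated for $r\geq 3$.}

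Your argument for ($\Rightarrow$) is correct and is essentially the paper's. The paper takes $\mathbf{1}_{A_n}\to f$ with $A_n\in\alpha_n\in K$, passes to a subsequence with $\alpha_n\to\beta\in K$, freezes the matched atom $B_n=B\in\beta$, and concludes $f=\mathbf{1}_B$. That is your inclusion $\overline{\mathcal{F}_K}\subseteq\mathcal{F}_{\overline K}$ with $\overline K=K$.

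The genuine gap is ($\Leftarrow$), which you leave open, and your suspicion is right: it fails for every $r\geq 3$. Take $X=[0,1]$ with Lebesgue measure and $r=3$. Let $P=[0,\tfrac13)$, $Q=[\tfrac13,\tfrac23)$, $R=[\tfrac23,1]$. For $n\geq 4$ let $\alpha_n=\{P_n,Q_n,R\}$ with $P_n=[0,\tfrac13+\tfrac1n)$ and $Q_n=[\tfrac13+\tfrac1n,\tfrac23)$. Put
\[
K=\{\alpha_n\colon n\geq 4\}\cup\bigl\{\{P,Q\cup R\},\{Q,P\cup R\}\bigr\}.
\]
Since $\mu(P_n\Delta P)=\mu(Q_n\Delta Q)=\tfrac1n$, the set $\mathcal{F}_K$ consists of two $L^2$-convergent sequences, their limits $\mathbf{1}_P$ and $\mathbf{1}_Q$, and the points $\mathbf{1}_R,\mathbf{1}_{Q\cup R},\mathbf{1}_{P\cup R}$. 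So $\mathcal{F}_K$ is compact. However, $\widetilde{\rho}(\alpha_n,\{P,Q,R\})\leq\tfrac2n\to 0$, while $\{P,Q,R\}\notin K$: it has three non-null atoms and differs from every $\alpha_n$. Hence $K$ is not closed in $(\mathfrak{P}_3,\widetilde{\rho})$, so it is not compact. The same $K$ works in $\mathfrak{P}_r$ for any $r>3$.

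This is exactly where the paper's own proof breaks. It asserts that every $\beta\in K^c$ has an atom $B$ with $\mathbf{1}_B\notin\mathcal{F}_K$, and $\beta=\{P,Q,R\}$ has no such atom. Your tentative test case has the roles reversed: the fine partition must be the \emph{missing} limit, with its atoms supplied by coarser members of $K$.

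What survives is what your identity yields. If $\mathcal{F}_K$ is compact, then $K$ is precompact by Proposition~\ref{prop:precpt-L2}. Moreover, every $\alpha\in\overline K$ has all its atoms in $\overline{\mathcal{F}_K}=\mathcal{F}_K$. So $K$ is compact as soon as it is saturated in your sense, i.e.\ it contains every $\beta\in\mathfrak{P}_r$ whose atoms all lie in $\mathcal{F}_K$. For $r=2$ saturation is automatic, since a partition in $\mathfrak{P}_2$ is determined by any one of its atoms. That is why the paper's argument goes through in that case.

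One further convention is used tacitly in ($\Rightarrow$), both by you and by the paper: padded empty sets must count as atoms. Take $K=\{\{D_n,X\setminus D_n\}\colon n\in\mathbb{N}\}\cup\{\{X\}\}\subset\mathfrak{P}_2$ with $0<\mu(D_n)\to 0$. Then $K$ is compact, but $\mathbf{1}_{D_n}\to 0$, which lies in $\mathcal{F}_K$ only if $\{X\}$ is read as $\{X,\emptyset\}$.
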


\begin{proof}
($\Leftarrow$) 
By Proposition~\ref{prop:precpt-L2}, $K$ is precompact in $(\mathfrak{P}_r,\widetilde{\rho})$.
We only need to show that $K^c$ is open.
Fix $ \beta \in K^c$. There exists $B\in \beta$
such that $\mathbf{1}_{B} \notin \{\mathbf{1}_{A} \colon A \in \alpha \in K\}$. 
Since $\{\mathbf{1}_{A} \colon A \in \alpha \in K\}$ is closed in $L^2(\mu)$, there exists $\delta >0$ such that for any  $A \in \alpha \in K$, 
we have $\mu(A\Delta B)=\|\mathbf{1}_A-\mathbf{1}_{B}\|_{L^2(\mu)}^2\geq \delta$.
Then for any $\alpha\in  K$,
$ \widetilde{\rho}(\alpha,\beta) 
\geq \min_{A\in\alpha}\mu(A\Delta B)\geq \delta$.
This implies that $\beta$ is an interior point in $K^c$,
and then $K^c$ is open.

($\Rightarrow$)
By Proposition~\ref{prop:precpt-L2}, $\{\mathbf{1}_{A} \colon A \in \alpha \in K\}$  is precompact in $L^2(\mu)$.
Next we show that it is closed.
Assume that $(\mathbf{1}_{A_n})_{n=1}^\infty$ is a sequence in $\{\mathbf{1}_{A} \colon A\in \alpha \in K\}$ which converges to $f$ in $L^2(\mu)$ as $n\to\infty$. 
Then $f=\mathbf{1}_{A_0}$ $\mu$-a.e for some $A_0\in\mathcal{B}$. 
For each $n\in \mathbb{N}$, pick $\alpha_n\in K$ with $A_n\in \alpha_n$.
Since $K$ is compact in $(\mathfrak{P}_r,\widetilde{\rho})$, without loss of generality, 
there exists $\beta\in K$ such that  $\alpha_n$ converges to $\beta$ in $(\mathfrak{P}_r,\widetilde{\rho})$ as $n\to\infty$. 
For every $n\in \mathbb{N}$, there exists $B_n\in\beta$
such that $\mu(A_n\Delta B_n)\leq \widetilde{\rho}(\alpha_n,\beta)$.
As $\beta$ is a finite partition, without loss of generality, assume that $B_n=B$ for all $n\geq \mathbb{N}$.
Then $\mu(A_n\Delta B)\to 0$ as $n\to\infty$.
As $\mathbf{1}_{A_n}$ converges to $\mathbf{1}_{A_0}$ in $L^2(\mu)$, one has $A_0=B\pmod \mu$. 
Thus, $\mathbf{1}_{A_0}\in \{\mathbf{1}_{A} \colon A \in \alpha \in K\}$.
\end{proof}

Finally, we prove the equivalent of precompactness and bounded mean complexity for a collection of partitions.

\begin{prop}\label{prop:precpt-bd-mean-complexity}
Let $(X, \mathcal{B}, \mu)$ be a standard probability space 
 $K$ be a subset of $\mathfrak{P}_r$ with $r\geq 2$.
Then the following assertions are equivalent:
\begin{enumerate}
\item $K$ is precompact in $(\mathfrak{P}_r,\rho)$;
\item $K$ has bounded mean complexity;
\item every sequence in $K$ has bounded mean complexity.
\end{enumerate}
\end{prop}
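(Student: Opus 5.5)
We prove (1)$\Rightarrow$(2)$\Rightarrow$(3)$\Rightarrow$(1); the middle implication is immediate from the definitions, as noted in the remark after the definition of bounded mean complexity.

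For (1)$\Rightarrow$(2) we use the metric $\widetilde{\rho}$, which by Lemma~\ref{lem:Rokhlin-metric-eq} gives the same precompact sets. Fix $\varepsilon>0$ and choose a finite $\eta$-net $\{\beta_1,\dotsc,\beta_m\}\subset\mathfrak{P}_r$ of $K$ in $\widetilde{\rho}$, with $\eta$ small depending on $\varepsilon$. Let $\gamma=\bigvee_{j=1}^m\beta_j$, a finite partition whose number of atoms is bounded independently of $E$. Given a finite $E\subset K$, attach to each $\alpha\in E$ some $\beta_{j(\alpha)}$ with $\widetilde{\rho}(\alpha,\beta_{j(\alpha)})<\eta$, and fix the optimal permutation matching the atoms of $\alpha$ to those of $\beta_{j(\alpha)}$. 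Define the bad set $Z_\alpha=\bigcup_i (A_i^\alpha\,\Delta\, B^{j(\alpha)}_{\sigma(i)})$, so that $\mu(Z_\alpha)<\eta$. Off $Z_\alpha$, the $\alpha$-name of $x$ is determined by its $\gamma$-atom. Put $g(x)=\frac1{|E|}\sum_{\alpha\in E}\mathbf 1_{Z_\alpha}(x)$. Then $\int g\,d\mu<\eta$, so by Markov's inequality $\mu(g\ge \sqrt\eta)<\sqrt\eta$. For $x,y$ in the same atom of $\gamma$ we have $H_E(x,y)\le g(x)+g(y)$. In each atom $C$ of $\gamma$ meeting $\{g<\sqrt\eta\}$, pick a point $x_C$ with $g(x_C)<\sqrt\eta$. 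Every $y\in C$ with $g(y)<\sqrt\eta$ then satisfies $H_E(x_C,y)<2\sqrt\eta$. Choosing $2\sqrt\eta\le\varepsilon$ gives $\mathcal C(E,\varepsilon)\le|\gamma|$, which is a bound independent of $E$.

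For (3)$\Rightarrow$(1) we argue by contraposition, in the style of Proposition~\ref{prop:precpt-entropy}. If $K$ is not precompact, Proposition~\ref{prop:precpt-entropy} and its proof give a sequence $(\alpha_i)$ in $K$ with $\widetilde\rho(\alpha_i,\alpha_j)>\varepsilon$ for $i\neq j$ and $H_\mu(\alpha_{i}\mid\bigvee_{j<i}\alpha_j)\ge\delta$. In particular the sequence has positive sequence entropy, since $\frac1nH_\mu(\bigvee_{i\le n}\alpha_i)\ge\delta$ for all $n$. We then show that bounded mean complexity forces zero sequence entropy. This is a Katok/Ferenczi-type counting argument and is the main obstacle. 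Suppose $\mathcal C(E_n,\varepsilon')\le C$ for $E_n=\{\alpha_1,\dots,\alpha_n\}$. Then, up to a set of measure $<\varepsilon'$, $X$ is covered by $C$ Hamming balls of radius $\varepsilon'$. Each ball meets at most $\binom{n}{\lfloor\varepsilon' n\rfloor}r^{\varepsilon' n}\le e^{n(H(\varepsilon')+\varepsilon'\log r)}$ atoms of $\bigvee_{i\le n}\alpha_i$, where $H$ denotes the binary entropy function. Split the entropy according to the cover: the covered part contributes at most $\log C+n(H(\varepsilon')+\varepsilon'\log r)$, and the leftover set contributes at most $\varepsilon' n\log r+\log 2$. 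Hence
\[
\tfrac1nH_\mu\Bigl(\bigvee_{i\le n}\alpha_i\Bigr)\le \tfrac{\log C(\varepsilon')+\log2}{n}+H(\varepsilon')+2\varepsilon'\log r+o(1),
\]
and the right side tends to $0$ as $n\to\infty$ and then $\varepsilon'\to0$. This contradicts the lower bound $\delta$.

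The delicate point is to set up the entropy splitting cleanly. We first refine $\bigvee_{i\le n}\alpha_i$ by the partition into the (disjointified) balls together with the leftover set, which costs only $\log(C+1)$. Conditionally on a ball, the entropy is at most the logarithm of the number of names within distance $\varepsilon'$ of the center. On the leftover set of measure $<\varepsilon'$, the conditional entropy is at most $n\log r$, contributing $\varepsilon' n\log r$ after weighting.
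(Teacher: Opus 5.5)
Your proof is correct. Your (1)$\Rightarrow$(2) is essentially the paper's argument: a finite $\widetilde{\rho}$-net $F$, one center per atom of $\bigvee_{\beta\in F}\beta$, and a Markov-type averaging over $E$. Your function $g$ with $H_E(x,y)\le g(x)+g(y)$ is a tidier packaging of the paper's sets $Y_{\mathbf{j}}$.

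For (3)$\Rightarrow$(1) you take a genuinely different route. The paper works in $L^2$. Assuming $\{\mathbf{1}_A\colon A\in\alpha\in K\}$ is not precompact, it takes atoms $A_i$ that are $20\varepsilon$-separated in $L^1$. It then uses the $C$ Hamming balls for $\{\alpha_1,\dotsc,\alpha_n\}$ to build functions $f_k$, constant on the disjointified balls, with $\sum_k\|f_k-\mathbf{1}_{A_k}\|_{L^1}<5n\varepsilon$. A pigeonhole over the at most $2^C$ possible $f_k$ gives a contradiction once $n\ge 4\cdot 2^C$, and Proposition~\ref{prop:precpt-L2} returns the conclusion to $\rho$.

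You go through entropy instead. The construction in Proposition~\ref{prop:precpt-entropy} gives a sequence with positive sequence entropy. Your Hamming-ball count is valid as stated: pad the disagreement set to exactly $\lfloor\varepsilon' n\rfloor$ coordinates, and take $\varepsilon'\le\frac12$ so that $\binom{n}{\lfloor\varepsilon' n\rfloor}\le e^{nH(\varepsilon')}$. It bounds $\frac1nH_\mu(\bigvee_{i\le n}\alpha_i)$ by $\frac{\log(C(\varepsilon')+1)}{n}+H(\varepsilon')+2\varepsilon'\log r$.

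What each route buys:
\begin{itemize}
\item Yours shows directly that bounded mean complexity forces zero maximal pattern entropy. It only needs $\log\mathcal{C}(\{\alpha_1,\dotsc,\alpha_n\},\varepsilon')=o(n)$, so subexponential complexity along every sequence already gives precompactness.
\item The paper's pigeonhole needs roughly $C<\log_2 n$, a much stronger smallness requirement. In exchange it avoids entropy and binomial estimates and yields the $L^2$-approximation of the indicator functions directly.
\end{itemize}

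A minor slip: from $H_\mu(\alpha_i\mid\bigvee_{j<i}\alpha_j)\ge\delta$ for $i\ge 2$ you get only $\frac1nH_\mu(\bigvee_{i\le n}\alpha_i)\ge\frac{n-1}{n}\delta$, not $\ge\delta$ for all $n$. This is harmless, since only the limit is used.
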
 

\begin{proof}
(1)$\Rightarrow$(2). 
By Lemma~\ref{lem:Rokhlin-metric-eq}, the set $K$ is also precompact in $(\mathfrak{P}_r,\widetilde{\rho})$. 
Fix $\varepsilon>0$.
There exists a subset $F$ of $K$  such that 
for any $\alpha\in K$, 
there exists $ \beta_{\varphi(\alpha)} \in F$ such that $\widetilde{\rho}(\alpha,\beta_{\varphi(\alpha ) })< \varepsilon^2 $. 
For each $\beta\in F$, enumerate $\beta =\{B_1^\beta,B_2^\beta,\dotsc,B_r^\beta\}$.
Now we fix a finite subset $E$ of $K$. 
For each $\alpha\in E$, enumerate $\alpha=\{A_1^\alpha,A_2^\alpha,\dotsc,A_r^\alpha\}$ such that 
\[
    \widetilde{\rho}(\alpha,\beta_{\varphi(\alpha)})
    =\sum_{j=1}^r\mu(A_j^\alpha\Delta B_j^{\varphi(\alpha)})<\varepsilon^2.
\]
Since $\alpha$ and $\beta_{\varphi(\alpha)}$ are partitions,
\begin{align*}
2&= \sum_{j=1}^r\mu(A_j^\alpha)+\sum_{j=1}^r \mu(B_j^{\varphi(\alpha)})\\
   & =\sum_{j=1}^r (\mu(A_j^\alpha\cap  B_j^{\varphi(\alpha)}) 
   + \mu(A_j^\alpha\setminus  B_j^{\varphi(\alpha)}) )
   + \sum_{j=1}^r (\mu(B_j^{\varphi(\alpha)} \cap A_j^\alpha)
   + \mu(B_j^{\varphi(\alpha)}\setminus A_j^\alpha) )\\
   &=2 \sum_{j=1}^r \mu(A_j^\alpha\cap  B_j^{\varphi(\alpha)})  + \sum_{j=1}^r\mu(A_j^\alpha\Delta B_j^{\varphi(\alpha)})
   <2 \sum_{j=1}^r \mu(A_j^\alpha\cap  B_j^{\varphi(\alpha)}) +\varepsilon^2.
\end{align*}
This implies 
\[ 
\mu\biggl(\bigcup_{j=1}^r A_j^{\alpha } \cap B_{j}^{\varphi(\alpha ) } \biggr) >1-\varepsilon^2.
 \] 
 
For each $\mathbf{j} = (j_{\beta})_{\beta \in F} \in \{1,\dotsc,r\}^{F}$, 
let
\[ 
B_{\mathbf{j}} = \bigcap_{\beta \in F}  B_{j_\beta}^\beta. 
\]
Note that a point $x\in A_j^{\alpha } \cap B_{j}^{\varphi(\alpha ) }$ 
if and only if  there exists $\mathbf{j}\in\{1,\dotsc,r\}^F$
with $j_{\varphi(\alpha)}=j$
such that $x\in A_j^\alpha\cap B_{\mathbf{j}}$.
Then for each $\alpha\in E$,
\[
\bigcup_{j=1}^r 
A_j^{\alpha } \cap B_{j}^{\varphi(\alpha ) } 
= \bigcup_{\mathbf{j} \in \{1,\dotsc,r\}^{F}}   A^{\alpha}_{j_{\varphi(\alpha)}}\cap B_{\mathbf{j}}.
\]
For each $\mathbf{j} \in \{1,\dotsc,r\}^{F}$, 
let
\[ 
Y_{\mathbf{j}} = \biggl\{ x \in X\colon  
\sum_{\alpha \in E} \mathbf{1}_{A^{\alpha}_{j_{\varphi(\alpha)}}\cap B_{\mathbf{j}} } (x)
\geq (1 - \varepsilon)|E|  \biggr\}
\]
and 
\[
    Y=\bigcup_{\mathbf{j}\in\{1,\dotsc,r\}^F} Y_{\mathbf{j}}.
\]
As $\{B_{\mathbf{j}}\colon \mathbf{j} \in \{1,\dotsc,r\}^{F}\} =\bigvee_{\beta\in F}\beta$ is a finite partition of $X$, 
\begin{align*}
Y&=\biggl\{ x \in X\colon \sum_{\alpha \in E} \mathbf{1}_{\bigcup_{\mathbf{j} \in \{1,\dotsc,r\}^{F}}  A^{\alpha}_{j_{\varphi(\alpha)}}\cap B_{\mathbf{j}} }(x) 
\geq (1 - \varepsilon)|E|  \biggr\}\\
&=
\biggl\{ x \in X\colon \sum_{\alpha \in E} \mathbf{1}_{\bigcup_{j=1}^r  A^{\alpha}_{j}\cap B_{j}^{\varphi(\alpha)} }(x) 
\geq (1 - \varepsilon)|E|  \biggr\}.
\end{align*}
Then
\begin{align*}
(1 - \varepsilon^{2}) |E| 
     &\leq \int_{X} \sum_{\alpha \in E} \mathbf{1}_{\bigcup_{j=1}^r  A^{\alpha}_{j}\cap B_{j}^{\varphi(\alpha)} } \dd\mu \\
     & = \int_{Y} \sum_{\alpha \in E} \mathbf{1}_{\bigcup_{j=1}^r  A^{\alpha}_{j}\cap B_{j}^{\varphi(\alpha)} } \dd\mu  
     + \int_{X\setminus Y} \sum_{\alpha \in E} \mathbf{1}_{\bigcup_{j=1}^r  A^{\alpha}_{j}\cap B_{j}^{\varphi(\alpha)} } \dd\mu \\ 
     & \leq |E|\mu(Y) + (1-\varepsilon)|E|(1-\mu(Y)).
\end{align*}
This implies $\mu(Y)\geq 1 - \varepsilon$. 
For each $x\in X$ and $\mathbf{j}\in \{1,\dotsc,r\}^F$, let 
\[
 E_{\mathbf{j}}^x=\Bigl\{ \alpha \in E \colon x \in  A^{\alpha}_{j_{\varphi(\alpha)}}\cap B_{\mathbf{j}}  \Bigr\}. 
\]
For any $x \in Y_{\mathbf{j}}$, we have $|E_{\mathbf{j}}^x|\geq(1-\varepsilon)|E|$. 
Then for any $\mathbf{j} \in \{1,\dotsc,k\}^{F}$ and $x,y \in Y_{\mathbf{j}} $, 
one has 
\begin{align*}
     H_E(x,y)&=\frac{1}{|E|} |\{\alpha\in E\colon E_x(\alpha)\neq E_y(\alpha)\}| \\
     & \leq \frac{1}{|E|} (|E| - |E_{\mathbf{j}}^x \cap E_{\mathbf{j}}^y|) <3\varepsilon.
\end{align*}
This implies that 
\[\mathcal{C}(E,3\varepsilon) \leq r^{|F|}.\] 
Since the constant $C:=r^{|F|}$ does not dependent on $E$,
$K$ has bounded mean complexity.

(2)$\Rightarrow$(3). It is clear.
         
(3)$\Rightarrow$(1). 
Assume that $\{\mathbf{1}_{A} \colon A \in \alpha \in K\}$ is not precompact in $L^2(\mu)$.
Then there exists $\varepsilon>0$ and a sequence  
$(A_i)_{i=1}^{\infty}$ in $\bigcup_{\alpha\in K}\alpha$ such that for any $1\leq i<j$, one has 
$\|\mathbf{1}_{A_i}-\mathbf{1}_{A_j}\|_{L^2(\mu)}\geq \sqrt{20\varepsilon}$. 
Then $\| \mathbf{1}_{A_i}- \mathbf{1}_{A_j}\|_{L^1(\mu)} \geq 20\varepsilon$ for all $1\leq i<j$. 
For each $i\in\bbn$, there exists $\alpha_i\in K$
such that $A_i\in\alpha_i$.
As each partition in $K$ has at most $r$ atoms,
without loss of generality, we assume that $\alpha_i\neq \alpha_j$ for all $1\leq i<j$.
Let $\mathcal{A}=(\alpha_{i})_{i=1}^\infty$.
Then $\mathcal{A}$ is a sequence in $K$.

Since $\mathcal{A}$ has bounded mean complexity,
for every $\varepsilon>0$ there exists $C\in \mathbb{N}$ such that $\mathcal{C}(\{\alpha_1,\dotsc,\alpha_n\},\varepsilon) \leq C$ for all $n\geq 1$.
Fix $n\geq 4\cdot 2^C$ and let $E=\{\alpha_1,\dotsc,\alpha_n\}$.
By the definition of $\mathcal{C}(E,\varepsilon)$, 
there exist points $x_1,x_2,\dotsc,x_C \in X$ such that
\[
\mu\biggl(\bigcup_{j=1}^C B_{H_{E}}(x_j,\varepsilon)\biggr)>1-\varepsilon.
\]
Put $B_1=B_{H_{E}}(x_1,\varepsilon)$ and $B_i=B_{H_{E}}(x_i,\varepsilon) \setminus \bigcup_{j=1}^{i-1}B_{H_{E}}(x_j,\varepsilon)$ for $i\in\{2,\dots,C\}$.
Without loss of generality, we assume $B_i \neq \emptyset$, 
then pick some $y_i \in B_i$ for each $i \in \{1,\dotsc,C\}$. 
Let $B_0=X \setminus \bigcup_{i=1}^C B_i$. 
Then $\{B_0,B_1,\dotsc, B_C\}$ is a finite partition of $X$. 
Note for any $x,y\in X$,
we have 
\[
\sum_{k=1}^n |\mathbf{1}_{A_{k}}(x)-\mathbf{1}_{A_{k}}(y)| \leq 2 n H_{E}(x, y).
 \]
In particular, for any $x \in B_i$ with $1\leq i \leq C$, 
we have $x,y_i\in B_{H_E}(x_i,\varepsilon)$ and
\[
\sum_{k=1}^n |\mathbf{1}_{A_{k}}(x)-\mathbf{1}_{A_{k}}(y_i)| \leq 2n H_{E}(x, y_i) < 4n \varepsilon.
\]

For $1\leq k \leq  n$,  define
\[
f_{k}(x)=
\begin{cases}
0,& \text{ if }x\in B_0, \\
 \mathbf{1}_{A_{k}}(y_i),& \text{ if } x\in B_i \text{ for some }i\in \{1,2,\dotsc,C\}.
\end{cases}
\] 
Note that $\mu(B_0) <\varepsilon$. Then 
\begin{align*}
    \sum_{k=1}^n  \|f_{k} - \mathbf{1}_{A_{k}}\|_{L^1(\mu)} 
    & =  \sum_{k=1}^n \int_{X} |f_{k}(x)- \mathbf{1}_{A_{k}}(x)| \dd\mu \\
    & = \sum_{k=1}^n \int_{\bigcup_{i=1}^C B_i} 
    |f_{k}(x)- \mathbf{1}_{A_{k}}(x)| \dd\mu +
    \sum_{k=1}^n  \int_{B_0} |f_{k}(x)- \mathbf{1}_{A_{k}}(x)| \dd\mu\\ 
 &\leq \sum_{k=1}^n \sum_{i=1}^C \int_{B_i}|f_k(x)-\mathbf{1}_{A_{k}}(x)| \dd\mu +n\mu(B_0) \\
  &\leq \sum_{i=1}^C \int_{B_i}\sum_{k=1}^n|\mathbf{1}_{A_{k}}(y_i)-\mathbf{1}_{A_{k}}(x)| \dd\mu +n\varepsilon \\
    & \leq \sum_{i=1}^C \mu(B_i) 2nH_E(x,y_i)  + n\varepsilon< 4n\varepsilon + n\varepsilon =5n \varepsilon.
\end{align*}
So there exists a finite set $J \subset \{1, \dotsc, n\}$ with $|J|\geq \frac{n}{2}$ such that 
$\|f_{k} - \mathbf{1}_{A_{k}}\|_{L^1(\mu)}< 10\varepsilon$ for all $k \in J$. 
Since each $f_k$ has the form $ \sum_{i=1}^{C} b_i \mathbf{1}_{B_i}$ with $b_i \in \{0, 1\}$ and $i=1,\dotsc,C$,
$f_k$ has at most $2^C$ choices.
As $|J|\geq \frac{n}{2}>2^C$,
by the pigeonhole principle, there exist $k'\neq k''\in J$ such that $f_{k'}=f_{k''}$. 
Then 
 \[
 \|\mathbf{1}_{A_{k'}}-\mathbf{1}_{A_{k''}}\|_{L^1(\mu)} 
 \leq \|\mathbf{1}_{A_{k'}} -f_{k'} \|_{L^1(\mu)}+ 
 \|f_{k''}-\mathbf{1}_{A_{k''}}\|_{L^1(\mu)} < 20\varepsilon,
 \]
 which is a contradiction.
Thus, the set $\{\mathbf{1}_{A} \colon A \in \alpha \in K\}$ is precompact  in $L^2(\mu)$.
Now by Proposition~\ref{prop:precpt-L2},  $K$ is precompact in $(\mathfrak{P}_r,\rho)$.
\end{proof}

Combining Propositions~\ref{prop:precpt-entropy}, \ref{prop:precpt-L2} and
\ref{prop:precpt-bd-mean-complexity}, we finish the proof of Theorem~\ref{thm:main1}.

\section{The complexity of finite partitions in measurable-preserving systems}

In this section, we first apply Theorem~\ref{thm:main1} for the complexity of finite partition along a subset of  measurable group actions and measure-preserving actions on standard probability space.
Then we study the complexity of a partition with respect to a F{\o}lner sequence in the amenable acting group, and prove Theorem~\ref{thm:main2}.

\subsection{The complexity of measurable actions}

Let $(X,\mathcal{B},\mu)$ be a standard probability space and $G$ be a discrete group.
If $\Pi\colon G\times X\to X$ is a measurable map satisfying 
\begin{enumerate}
    \item $\Pi(e,x)=x$, where $e$ is the identity of $G$;
    \item $\Pi(gh,x)=\Pi(g,\Pi(h,x))$ for all $g,h\in G$,
\end{enumerate}
then we say that $G$ is \emph{measurably acting} on $X$.
When there is no risk of ambiguity, we write $gx$ for $\Pi(g,x)$.

Assume that a discrete group $G$ is measurably acting on a standard probability space $(X,\mathcal{B},\mu)$.
For a finite partition $\alpha$ of $X$ and $g\in G$, denote $g^{-1}\alpha=\{g^{-1}A \colon A\in \alpha\}$.
Note that if $\alpha\in\mathfrak{P}_r$ for some $r\geq 2$, then $g^{-1}\alpha \in \mathfrak{P}_r$.
Then we define the \emph{maximal pattern entropy of $\alpha$ along $S$} by $h_{\mu,S}^*(\alpha)=h_\mu^*(\{g^{-1}\alpha\colon g\in S\})$,
and we say that $\alpha$ has \emph{bounded mean complexity along $S$} if $\{g^{-1}\alpha\colon g\in S\}$ has bounded mean complexity.
We say that $f\in L^2(\mu)$ is \emph{$S$-almost periodic} 
if $\{f\circ g\colon g\in S\}$ is precompact in $L^2(\mu)$.

According to Theorem~\ref{thm:main1}, the result below  follows immediately from the direct expansion of the definitions.
Note that the equivalence of (2) and (3) in the following result was proved in \cite[Lemma 3.3]{HSY05} for $\mathbb{Z}$-actions, and
the implication (3)$\Rightarrow$(2) was proved in \cite[Lemma B.2]{LWX25} for $G$-measure-preserving systems.

\begin{prop}\label{prop:measura-actions-1}
Assume that a discrete group $G$ is measurably acting on a standard probability space $(X,\mathcal{B},\mu)$.
For a subset $S$ of $G$ and a finite partition $\alpha$ of $X$,
the following assertions are equivalent:
\begin{enumerate}
    \item $\{g^{-1}\alpha\colon g\in S\}$ is precompact in $(\mathfrak{P},\rho)$;
    \item the maximal pattern entropy of $\alpha$ along $S$ is zero;
    \item for every $A\in\alpha$, $\mathbf{1}_A$ is $S$-almost periodic;
    \item $\alpha$ has bounded mean complexity along $S$. 
\end{enumerate}
\end{prop}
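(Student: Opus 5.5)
The plan is to apply Theorem~\ref{thm:main1} to the set $K=\{g^{-1}\alpha\colon g\in S\}$ and then rewrite each of its four conditions in the language of the action. First we fix $r\geq 2$ with $\alpha\in\mathfrak{P}_r$; if $\alpha$ has only one atom we pad it with empty sets. Since each $g\in G$ acts by a measurable bijection, $g^{-1}\alpha$ is again a partition with at most $r$ atoms, so $K\subset\mathfrak{P}_r$. One point needs care. Condition (1) speaks of precompactness in $(\mathfrak{P},\rho)$, while Theorem~\ref{thm:main1} uses $(\mathfrak{P}_r,\rho)$. Precompactness means total boundedness (or, equivalently, compact closure, since $(\mathfrak{P}_r,\rho)$ is complete by Lemma~\ref{lem:Rokhlin-metric-eq}). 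Total boundedness of a set is intrinsic to the metric restricted to that set, so the two notions agree.

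Next we match the remaining conditions one by one. By definition $h^*_{\mu,S}(\alpha)=h^*_\mu(K)$, so (2) is exactly Theorem~\ref{thm:main1}(2). Likewise, bounded mean complexity along $S$ is by definition bounded mean complexity of $K$, which is Theorem~\ref{thm:main1}(4).

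For (3), we note that $\mathbf{1}_{g^{-1}A}=\mathbf{1}_A\circ g$. This gives
\[
\{\mathbf{1}_B\colon B\in\beta\in K\}=\bigcup_{A\in\alpha}\{\mathbf{1}_A\circ g\colon g\in S\}.
\]
The left-hand side is the set appearing in Theorem~\ref{thm:main1}(3). The right-hand side is a finite union, because $\alpha$ has finitely many atoms. A finite union is precompact in $L^2(\mu)$ if and only if each piece is precompact, and precompactness of the piece for $A$ is exactly $S$-almost periodicity of $\mathbf{1}_A$. So (3) matches Theorem~\ref{thm:main1}(3).

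Combining these identifications with Theorem~\ref{thm:main1} gives the equivalence of (1)–(4). No step is a real obstacle. The only places needing attention are the $\mathfrak{P}$ versus $\mathfrak{P}_r$ remark and the identity $\mathbf{1}_{g^{-1}A}=\mathbf{1}_A\circ g$. That identity should be read modulo null sets, since a measurable action need not preserve $\mu$; all of the conditions only involve the sets $g^{-1}A$ themselves, so this causes no trouble.
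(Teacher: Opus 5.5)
Your proposal is correct and matches the paper's argument: the paper's entire justification is that the proposition follows from Theorem~\ref{thm:main1} applied to $K=\{g^{-1}\alpha\colon g\in S\}\subset\mathfrak{P}_r$ by expanding the definitions, and your identifications (of (2) and (4) directly by definition, of (3) via $\{\mathbf{1}_B\colon B\in\beta\in K\}=\bigcup_{A\in\alpha}\{\mathbf{1}_A\circ g\colon g\in S\}$, and of (1) via the intrinsic nature of total boundedness) make that expansion explicit. As a minor remark, $\mathbf{1}_{g^{-1}A}=\mathbf{1}_A\circ g$ actually holds pointwise, not just modulo null sets; the genuine subtlety, which the paper also passes over, is that for an action that is not nonsingular $g^{-1}\alpha$ depends on the chosen representative of $\alpha$, and fixing one representative whose atoms cover $X$ exactly, as you implicitly do, takes care of it.
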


The maximal pattern entropy of $(X,\mathcal{B},\mu,G)$ along $S$ is defined by $h^*_{\mu,S}(G)=\sup_{\alpha\in \mathfrak{P}} h_{\mu,S}^*(\alpha)$.
By Proposition~\ref{prop:measura-actions-1} and expanding the definitions, we have the following consequence.

\begin{prop}\label{prop:measura-actions-2}
Assume that a discrete group $G$ is measurably acting on a standard probability space $(X,\mathcal{B},\mu)$.
For a subset $S$ of $G$,
the following assertions are equivalent:
\begin{enumerate}
    \item the maximal pattern entropy of $(X,\mathcal{B},\mu,G)$ along $S$ is zero;
    \item for every finite partition $\alpha$ of $X$, $\{g^{-1}\alpha\colon g\in S\}$ is precompact in $(\mathfrak{P},\rho)$;
    \item for every finite partition $\alpha$ of $X$ with two atoms, $\{g^{-1}\alpha\colon g\in S\}$ is precompact in $(\mathfrak{P},\rho)$;
    \item for every $A\in\mathcal{B}$, $\mathbf{1}_A$ is $S$-almost periodic;
    \item every finite partition of $X$ has bounded mean complexity along $S$;
 \item every finite partition of $X$ with two atoms has bounded mean complexity along $S$.
\end{enumerate}
\end{prop}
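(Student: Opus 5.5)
The plan is to reduce everything to Proposition~\ref{prop:measura-actions-1}, applied partition by partition, and to handle separately the passage between general finite partitions and two-atom partitions.

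First, (1)$\Leftrightarrow$(2)$\Leftrightarrow$(5). By definition, (1) says $h^*_{\mu,S}(\alpha)=0$ for every $\alpha\in\mathfrak{P}$, since $h^*_{\mu,S}(G)$ is a supremum of nonnegative quantities. For each fixed $\alpha$ we have $\alpha\in\mathfrak{P}_r$ for some $r$, and $\{g^{-1}\alpha\colon g\in S\}\subset\mathfrak{P}_r$. Proposition~\ref{prop:measura-actions-1} then gives, for that $\alpha$, the equivalence of zero maximal pattern entropy along $S$, precompactness of $\{g^{-1}\alpha\colon g\in S\}$, and bounded mean complexity along $S$. Quantifying over all $\alpha$ yields (1)$\Leftrightarrow$(2)$\Leftrightarrow$(5). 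One point needs a remark: precompactness in $(\mathfrak{P},\rho)$ versus in $(\mathfrak{P}_r,\rho)$. This is harmless because $\mathfrak{P}_r$ carries the restricted metric, and total boundedness of a subset is intrinsic to it.

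Second, the trivial directions: (2)$\Rightarrow$(3) and (5)$\Rightarrow$(6) are specializations to two-atom partitions. Applying Proposition~\ref{prop:measura-actions-1} to two-atom partitions gives (3)$\Leftrightarrow$(6). It also shows (3)$\Leftrightarrow$(4). Indeed, for $A\in\mathcal{B}$ the partition $\{A,A^c\}$ satisfies (3) if and only if $\mathbf{1}_A$ and $\mathbf{1}_{A^c}=1-\mathbf{1}_A$ are $S$-almost periodic. Moreover, $\mathbf{1}_{A^c}\circ g=1-\mathbf{1}_A\circ g$, so the almost periodicity of $\mathbf{1}_{A^c}$ follows from that of $\mathbf{1}_A$. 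The degenerate cases $A=\emptyset$ and $A=X$ are trivial.

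The only remaining step is (4)$\Rightarrow$(2), which is the one place that needs an argument beyond unpacking definitions. Given a finite partition $\alpha$, (4) tells us that $\mathbf{1}_A$ is $S$-almost periodic for every $A\in\alpha$. This is exactly condition (3) of Proposition~\ref{prop:measura-actions-1}, because $\mathbf{1}_{g^{-1}A}=\mathbf{1}_A\circ g$. Hence $\{g^{-1}\alpha\colon g\in S\}$ is precompact, which is (2).

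I expect no real obstacle. The steps needing mild care are the identification $\mathbf{1}_{g^{-1}A}=\mathbf{1}_A\circ g$, the observation that a finite union of precompact sets in $L^2(\mu)$ is precompact, and the consistency of precompactness between $\mathfrak{P}$ and $\mathfrak{P}_r$.
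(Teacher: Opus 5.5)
Your proposal is correct and follows the same route as the paper, which obtains this proposition directly from Proposition~\ref{prop:measura-actions-1} by ``expanding the definitions.'' You spell out the implication chain that the paper leaves implicit: $(1)\Leftrightarrow(2)\Leftrightarrow(5)$ partition by partition, the two-atom specializations, $(3)\Leftrightarrow(6)$, and the step $(3)\Rightarrow(4)\Rightarrow(2)$ via $\mathbf{1}_{g^{-1}A}=\mathbf{1}_A\circ g$.
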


\subsection{The complexity of measure-preserving systems}
Assume that a discrete group $G$ is measurably acting on a standard probability space $(X,\mathcal{B},\mu)$.
If $G$ is also measure-preserving, that is, for every $g\in G$ and $A\in\mathcal{B}$, $\mu(A)=\mu(g^{-1}A)$,
then we say that $(X,\mathcal{B},\mu,G)$ is a \emph{measure-preserving system}.

\begin{lem}\label{lem:C-E-eps}
Let $(X,\mathcal{B},\mu,G)$ be a measure-preserving system and $E$ be a finite subset of $\mathfrak{P}_r$ for some $r\geq 2$.
Then for any $g\in G$ and $\varepsilon>0$,
$\mathcal{C}(E,\varepsilon)=\mathcal{C}(\{g^{-1}\alpha\colon \alpha\in E\},\varepsilon)$.
\end{lem}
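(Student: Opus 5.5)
We will prove the equality by showing that the map $x\mapsto gx$ transports Hamming balls for $\{g^{-1}\alpha\colon\alpha\in E\}$ onto Hamming balls for $E$ while preserving measure. By symmetry (apply the same argument with $g^{-1}$ in place of $g$, since $g(g^{-1}\alpha)=\alpha$), it suffices to prove the single inequality $\mathcal{C}(\{g^{-1}\alpha\colon \alpha\in E\},\varepsilon)\leq \mathcal{C}(E,\varepsilon)$.

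Write $E'=\{g^{-1}\alpha\colon\alpha\in E\}$. First observe that $\alpha\mapsto g^{-1}\alpha$ is injective, because $g$ acts bijectively and preserves $\mu$; hence $|E'|=|E|$. If $\alpha=\{A_1^\alpha,\dotsc,A_r^\alpha\}$, enumerate $g^{-1}\alpha$ as $\{g^{-1}A_1^\alpha,\dotsc,g^{-1}A_r^\alpha\}$. Then $x\in g^{-1}A_i^\alpha$ if and only if $gx\in A_i^\alpha$, so $E'_x(g^{-1}\alpha)=E_{gx}(\alpha)$. Consequently
\[
H_{E'}(x,y)=H_E(gx,gy)\quad\text{for all } x,y\in X.
\]
In particular, $B_{H_{E'}}(x,\varepsilon)=g^{-1}B_{H_E}(gx,\varepsilon)$.

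Now take $D\subset X$ with $|D|=\mathcal{C}(E,\varepsilon)$ and $\mu\bigl(\bigcup_{z\in D}B_{H_E}(z,\varepsilon)\bigr)>1-\varepsilon$. Set $D'=\{g^{-1}z\colon z\in D\}$, so that $|D'|\leq|D|$. Then
\[
\bigcup_{x\in D'}B_{H_{E'}}(x,\varepsilon)=g^{-1}\bigcup_{z\in D}B_{H_E}(z,\varepsilon),
\]
and by measure preservation this set has measure greater than $1-\varepsilon$. Hence $\mathcal{C}(E',\varepsilon)\leq\mathcal{C}(E,\varepsilon)$. The reverse inequality follows by applying the same argument to $E'$ and $g^{-1}$.

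The only potential obstacle is the well-definedness of partitions modulo null sets. Names are defined pointwise via chosen representatives, so the identity above holds for the chosen representatives; changing representatives alters the balls only by null sets, and a small adjustment of the centers handles this. We will also use that $x\mapsto gx$ is a bijection of $X$ (since $\Pi(g^{-1},\cdot)$ is its inverse), which is what makes $g^{-1}$ of a union the union of the $g^{-1}$'s.
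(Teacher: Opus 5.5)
Your proof is correct and follows the paper's argument essentially verbatim: the identity $H_{E'}(x,y)=H_E(gx,gy)$, the resulting relation $B_{H_{E'}}(g^{-1}z,\varepsilon)=g^{-1}B_{H_E}(z,\varepsilon)$, the transport of a minimal center set $D$ to $g^{-1}D$, invariance of $\mu$, and symmetry under $g\leftrightarrow g^{-1}$ (your explicit check that $|E'|=|E|$ is a detail the paper leaves implicit). Your closing paragraph on representatives is not needed, because with the representatives $g^{-1}A_i^\alpha$ the identity is exact; it is also inaccurate, since re-choosing representatives can change the ball around a center lying in the altered null set by a set of positive measure, so it is better dropped.
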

\begin{proof}
Let $g^{-1}E=\{g^{-1}\alpha\colon \alpha\in E\}$.
For $x,y\in X$ and $g\in G$, 
$x,y$ are in different atoms of a partition $\alpha$
if and only if $g^{-1}x,g^{-1}y$ are in different atoms of the partition $g^{-1}\alpha$, 
then $H_E(x,y)=H_{g^{-1}E}(g^{-1}x,g^{-1}y)$.
For $x\in X$ and $\varepsilon>0$,
$B_{H_{g^{-1}E}}(g^{-1}x,\varepsilon)=g^{-1} B_{H_E}(x,\varepsilon)$.

Pick a finite subset $D$ of $X$ with $|D|=\mathcal{C}(E,\varepsilon)$.
Then 
\[
    \mu\biggl(\bigcup_{x\in g^{-1}D} B_{H_{g^{-1}E}}(x,\varepsilon)\biggr) = \mu\biggl(g^{-1} \bigcup_{x\in D} B_{H_E}(x,\varepsilon)\biggr)
    = \mu\biggl(\bigcup_{x\in D} B_{H_E}(x,\varepsilon)\biggr)>1-\varepsilon.
\]
This implies that $\mathcal{C}(g^{-1}E,\varepsilon)\leq \mathcal{C}(E,\varepsilon)$. 
By the same reason, one also has  $\mathcal{C}(E,\varepsilon) \leq \mathcal{C}(g^{-1}E,\varepsilon)$.
Thus, $\mathcal{C}(E,\varepsilon) = \mathcal{C}(g^{-1}E,\varepsilon)$.
\end{proof}

\begin{lem}\label{lem:partition-entropy}
Let $(X,\mathcal{B},\mu)$ be a measure-preserving system and $S\subset G$. 
Then the map $\mathfrak{P}\to [0,\infty)$, $\alpha \mapsto h^*_{\mu,S}(\alpha)$, is continuous with respect to $\rho$.
In particular, $\{\alpha\in\mathfrak{P}\colon h^*_{\mu,S}(\alpha)=0\}$ in closed in $(\mathfrak{P},\rho)$.
\end{lem}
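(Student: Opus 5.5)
We will show that $\alpha\mapsto h^*_{\mu,S}(\alpha)$ is Lipschitz with respect to $\rho$. More precisely, for all $\alpha,\beta\in\mathfrak{P}$ we aim for
\[
|h^*_{\mu,S}(\alpha)-h^*_{\mu,S}(\beta)|\leq \rho(\alpha,\beta).
\]
Continuity follows directly. The closedness of $\{\alpha\colon h^*_{\mu,S}(\alpha)=0\}$ then follows, since it is the preimage of the closed set $\{0\}$ under a continuous map.

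Fix $n\in\mathbb{N}$ and $g_1,\dotsc,g_n\in S$. The key estimate is a conditional-entropy comparison. By monotonicity of entropy under refinement and the chain rule,
\[
H_\mu\biggl(\bigvee_{i=1}^n g_i^{-1}\alpha\biggr)\leq H_\mu\biggl(\bigvee_{i=1}^n g_i^{-1}\alpha\vee\bigvee_{i=1}^n g_i^{-1}\beta\biggr)
= H_\mu\biggl(\bigvee_{i=1}^n g_i^{-1}\beta\biggr)+H_\mu\biggl(\bigvee_{i=1}^n g_i^{-1}\alpha\biggm|\bigvee_{i=1}^n g_i^{-1}\beta\biggr).
\]
Subadditivity of conditional entropy in the first argument, together with monotonicity in the conditioning partition, bounds the last term:
\[
H_\mu\biggl(\bigvee_{i=1}^n g_i^{-1}\alpha\biggm|\bigvee_{i=1}^n g_i^{-1}\beta\biggr)\leq \sum_{i=1}^n H_\mu(g_i^{-1}\alpha\mid g_i^{-1}\beta).
\]
Since $\mu$ is $G$-invariant, each summand satisfies $H_\mu(g_i^{-1}\alpha\mid g_i^{-1}\beta)=H_\mu(\alpha\mid\beta)$. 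Therefore
\[
H_\mu\biggl(\bigvee_{i=1}^n g_i^{-1}\alpha\biggr)\leq H_\mu\biggl(\bigvee_{i=1}^n g_i^{-1}\beta\biggr)+nH_\mu(\alpha|\beta).
\]

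Take the supremum over $g_1,\dotsc,g_n\in S$, which gives $P^*(n)$ for $\{g^{-1}\alpha\colon g\in S\}$ on the left and for $\{g^{-1}\beta\colon g\in S\}$ on the right. Divide by $n$ and let $n\to\infty$, using the limit form of $h^*_\mu$ from Fekete's lemma. This yields
\[
h^*_{\mu,S}(\alpha)\leq h^*_{\mu,S}(\beta)+H_\mu(\alpha|\beta).
\]
Exchanging the roles of $\alpha$ and $\beta$ gives the symmetric inequality with $H_\mu(\beta|\alpha)$. Combining the two bounds, the difference is at most $\max\{H_\mu(\alpha|\beta),H_\mu(\beta|\alpha)\}\leq\rho(\alpha,\beta)$.

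The only point needing care is finiteness: the subtraction step requires $h^*_{\mu,S}(\alpha)<\infty$. This holds because $H_\mu(\bigvee_{i=1}^n g_i^{-1}\alpha)\leq n\log|\alpha|$, so $h^*_{\mu,S}(\alpha)\leq\log|\alpha|$. The remaining potential obstacle is justifying the invariance $H_\mu(g^{-1}\alpha\mid g^{-1}\beta)=H_\mu(\alpha\mid\beta)$, which is immediate from $\mu(g^{-1}A\cap g^{-1}B)=\mu(A\cap B)$. Beyond this the argument is routine.
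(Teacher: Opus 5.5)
Your proof is correct and follows the paper's argument essentially step for step. Both refine by the $\beta$-join, apply the chain rule, subadditivity and monotonicity in the conditioning partition, and use $G$-invariance to get $h^*_{\mu,S}(\alpha)\le h^*_{\mu,S}(\beta)+H_\mu(\alpha|\beta)$; symmetry then gives the $1$-Lipschitz bound. Your use of $n$-tuples $g_1,\dots,g_n\in S$ instead of the paper's finite subsets $S_1\subset S$ matches the definition of $P^*_{K,\mu}(n)$ slightly more directly, and your $\max$ bound is marginally sharper, but these differences are cosmetic.
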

\begin{proof}
For any $\alpha,\beta \in \mathfrak{P}$, we claim that
\[
    h^*_{\mu,S}(\alpha) \leq h^*_{\mu,S}(\beta) + H_{\mu}(\alpha |\beta).
\]
In fact, for any finite subset $S_1$ of $S$, 
\begin{align*}
    H_{\mu} \biggl(\bigvee_{g\in S_1} g^{-1}\alpha\biggr) 
    &\leq H_{\mu}\biggl(\bigvee_{g\in S_1} g^{-1}\alpha \vee \bigvee_{h\in S_1} h^{-1}\beta\biggr )  \\
    &= H_{\mu}\biggl(\bigvee_{h\in S_1} h^{-1}\beta \biggr )+H_{\mu} \biggl(\bigvee_{g\in S_1} g^{-1}\alpha  |\bigvee_{h\in S_1} h^{-1}\beta\biggr ) \\  
    &\leq H_{\mu}\biggl(\bigvee_{h\in S_1} h^{-1}\beta\biggr )
    +\sum_{g\in S_1} H_{\mu}\biggl(g^{-1}\alpha | \bigvee_{h\in S_1} h^{-1}\beta\biggr) \\
    &\leq  H_{\mu}\biggl(\bigvee_{h\in S_1} h^{-1}\beta\biggr )
    +\sum_{g\in S_1} H_{\mu}\biggl(g^{-1}\alpha |g^{-1}\beta\biggr)\\
    &=H_{\mu}\biggl(\bigvee_{h\in S_1} h^{-1}\beta\biggr )
    + |S_1| H_{\mu}(\alpha|\beta).
\end{align*}
This implies that 
$h^*_{\mu,S}(\alpha) \leq h^*_{\mu,S}(\beta) + H_{\mu}(\alpha |\beta)$.

By symmetry, we also have $h^*_{\mu,S}(\beta) \leq h^*_{\mu,S}(\alpha) + H_{\mu}(\beta |\alpha)$.
Thus, 
$|h^*_{\mu,S}(\alpha) - h^*_{\mu,S}(\beta)|\leq \rho(\alpha,\beta) $.
This ends the proof.
\end{proof}

\begin{prop}
Let $(X,\mathcal{B},\mu,G)$ be a measure-preserving system and $S\subset G$.
Then 
\begin{enumerate}
    \item the collection of $S$-almost periodic function is a closed linear subspace of $L^2(\mu)$;
    \item the collection of measurable sets $A$ in $\mathcal{B}$ with $h^*_{\mu,S}(\{A,A^c\})=0$
    is a sub-$\sigma$-algebra of $\mathcal{B}$.
    Moreover, if $S=G$, then this  sub-$\sigma$-algebra is $G$-invariant.
\end{enumerate}
\end{prop}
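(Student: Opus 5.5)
For (1), I will work directly with the definition: $f$ is $S$-almost periodic if $\{f\circ g\colon g\in S\}$ is precompact in $L^2(\mu)$.

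Because $\mu$ is $G$-invariant, each Koopman operator $U_g f=f\circ g$ is an isometry of $L^2(\mu)$. The argument then has two parts.

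\emph{Linearity.} Let $f_1,f_2$ be $S$-almost periodic and $a,b$ scalars. Then
\[
\{(af_1+bf_2)\circ g\colon g\in S\}
\]
is contained in the image of $\overline{\{f_1\circ g\}}\times\overline{\{f_2\circ g\}}$ under the continuous map $(u,v)\mapsto au+bv$. That image is compact, so the set is precompact.

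\emph{Closedness.} Let $f_n\to f$ with each $f_n$ $S$-almost periodic, and fix $\varepsilon>0$. Choose $n$ with $\|f_n-f\|<\varepsilon/3$ and take a finite $\varepsilon/3$-net of $\{f_n\circ g\colon g\in S\}$. Since
\[
\|f\circ g-f_n\circ g\|=\|f-f_n\|<\varepsilon/3
\]
for every $g$, this net is an $\varepsilon$-net for $\{f\circ g\colon g\in S\}$. Hence that set is totally bounded, and so precompact in the complete space $L^2(\mu)$.

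For (2), let $\mathcal{D}=\{A\in\mathcal{B}\colon h^*_{\mu,S}(\{A,A^c\})=0\}$. By Proposition~\ref{prop:measura-actions-1} with the two-atom partition $\{A,A^c\}$, and noting that $\mathbf{1}_{A^c}=1-\mathbf{1}_A$ with constants almost periodic, we get: $A\in\mathcal{D}$ if and only if $\mathbf{1}_A$ is $S$-almost periodic. So $\mathcal{D}$ is exactly the set of $A$ with $\mathbf{1}_A$ in the closed subspace $V$ from (1). The $\sigma$-algebra axioms then follow.
\begin{enumerate}
\item $X\in\mathcal{D}$ since constants are almost periodic.
\item Complements are handled by $\mathbf{1}_{A^c}=1-\mathbf{1}_A$ and linearity.
\item For finite intersections I will not use products, since $V$ need not obviously be an algebra. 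Instead I use entropy. For $A,B\in\mathcal{D}$, the partition $\{A\cap B,(A\cap B)^c\}$ is coarser than $\{A,A^c\}\vee\{B,B^c\}$. So for finite $S_1\subset S$, the join over $S_1$ of $g^{-1}\{A\cap B,\ldots\}$ is coarser than the join of $g^{-1}\{A,A^c\}$ joined with the join of $g^{-1}\{B,B^c\}$. Subadditivity of $H_\mu$ gives $P^*(n)\le P^*_A(n)+P^*_B(n)$, and hence $h^*_{\mu,S}(\{A\cap B,(A\cap B)^c\})=0$.
\item For countable unions, let $A_k\in\mathcal{D}$ and set $C_n=A_1\cup\dots\cup A_n\in\mathcal{D}$, which increases to $C=\bigcup_k A_k$. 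Then $\mathbf{1}_{C_n}\to\mathbf{1}_C$ in $L^2(\mu)$, so $\mathbf{1}_C\in V$ by closedness, i.e.\ $C\in\mathcal{D}$. Alternatively, $\rho(\{C_n,C_n^c\},\{C,C^c\})\to0$ and one can apply Lemma~\ref{lem:partition-entropy}.
\end{enumerate}

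For $G$-invariance when $S=G$, take $A\in\mathcal{D}$ and $h\in G$. Then
\[
\{g^{-1}h^{-1}\{A,A^c\}\colon g\in G\}=\{(hg)^{-1}\{A,A^c\}\colon g\in G\}=\{g'^{-1}\{A,A^c\}\colon g'\in G\},
\]
so the maximal pattern entropy is unchanged and $h^{-1}A\in\mathcal{D}$. The main point needing care is the intersection step. The entropy subadditivity argument avoids having to show that the product of almost periodic indicators is almost periodic. That fact would also follow, because precompactness of $\{\mathbf{1}_A\circ g\}$ and $\{\mathbf{1}_B\circ g\}$ in $L^2$ gives precompactness of the pointwise products, as these are bounded by $1$ and satisfy $\|uv-u'v'\|_2\le\|u-u'\|_2+\|v-v'\|_2$.
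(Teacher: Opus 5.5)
Your proof is correct. Part (1) is the same as the paper's. For (2), your route is partly different, and the step where you use the closed subspace $V$ from part (1) is the real difference.

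The paper stays entirely in entropy language:
\begin{enumerate}
\item $A^c\in\mathcal{K}$ is immediate, since $\{A,A^c\}$ and $\{A^c,A\}$ are the same partition.
\item Closure under finite unions uses exactly your subadditivity argument for $H_\mu$.
\item Closure under increasing unions comes from Lemma~\ref{lem:partition-entropy}, i.e.\ $|h^*_{\mu,S}(\alpha)-h^*_{\mu,S}(\beta)|\le\rho(\alpha,\beta)$. So $\mathcal{K}$ is an algebra and a monotone class. Your alternative via that lemma is precisely the paper's argument.
\end{enumerate}

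Your main route instead first uses Proposition~\ref{prop:measura-actions-1} to identify $\mathcal{D}$ with $\{A\colon \mathbf{1}_A\in V\}$. You then get complements from linearity and countable unions from closedness in part (1).

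The trade-off is as follows. The paper's argument for (2) is elementary and independent of Theorem~\ref{thm:main1}. Your identification, by contrast, rests on the nontrivial equivalence in that theorem between zero maximal pattern entropy and precompactness of the indicator functions. In exchange, your route makes (2) a direct corollary of (1): $\mathcal{D}$ becomes the set of $A$ whose indicators lie in the closed subspace $V$. Combined with the product estimate you note at the end, the whole $\sigma$-algebra check could then be done in $L^2$, with entropy entering only through Theorem~\ref{thm:main1}.

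Your $G$-invariance argument uses the exact equality $\{g^{-1}h^{-1}\{A,A^c\}\colon g\in G\}=\{g'^{-1}\{A,A^c\}\colon g'\in G\}$. This spells out what the paper calls easy to see, and it does not actually need $\mu$ to be invariant.
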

\begin{proof}
(1) 
Let $H$ be the collection of $S$-almost periodic functions in $L^2(\mu)$. 
It is easy to verify that $H$ is a linear subspace.
Now we show that that $H$ is closed. 
Consider a sequence $(f_i)_{i=1}^\infty$ in $H$ which converges to $f$ in $L^2(\mu)$. 
For every $\varepsilon>0$, 
there exists some $n\in\mathbb{N}$ such that 
$\|f_n-f\|_{L^2(\mu)}<\frac{\varepsilon}{3}$.
Since $f_n$ is $S$-almost periodic, 
there exists a finite subset $S'$ of $S$ such that 
for any $g\in S$ there exists some $g'\in S'$ with 
$\|f_n\circ g-f_n\circ g'\|_{L^2(\mu)}<\frac{\varepsilon}{3}$.
As $G$ is measure-preserving,  
\begin{align*}
 \|f\circ g-&f\circ g'\|_{L^2(\mu)} \\
 \leq & \|f\circ g- f_n\circ g\|_{L^2(\mu)}+ \|f_n\circ g-f_n\circ g'\|_{L^2(\mu)}+ \|f_n\circ g'- f\circ g'\|_{L^2(\mu)}\\
=& \|f-f_n\|_{L^2(\mu)}+ \|f_n\circ g-f_n\circ g'\|_{L^2(\mu)}+ 
\|f_n-f\|_{L^2(\mu)}\\
<&\frac{\varepsilon}{3}+\frac{\varepsilon}{3}+\frac{\varepsilon}{3}=\varepsilon.
\end{align*} 
Then $f$ is $S$-almost periodic.

(2) 
Let $\mathcal{K}$ be the collection of measurable sets $A$ in $\mathcal{B}$ with $h^*_{\mu,S}(\{A,A^c\})=0$. 
It is clear that 
$X\in \mathcal{K}$ and $\emptyset\in \mathcal{K}$.
For any $A\in\mathcal{K}$, since $(A^c)^c=A$, 
we have $A^c\in \mathcal{K}$.
Assume that $A,B\in \mathcal{K}$.
Note that for any finite subset $S'$ of $S$, 
\[
   \bigvee_{g\in S'} g^{-1}\{A\cup B, (A\cup B)^c\}  \preceq\bigvee_{g\in S'} g^{-1}\{A,A^c\} \vee \bigvee_{g\in S'}  g^{- 1}\{B,B^c\} 
\]
and then 
\[
    H_\mu\biggl(\bigvee_{g\in S'} g^{-1}\{A\cup B, (A\cup B)^c\}  \biggr )\leq H_\mu\biggl(\bigvee_{g\in S'} g^{-1}\{A,A^c\}\biggr) +
    H_\mu\biggl( \bigvee_{g\in S'}  g^{- 1}\{B,B^c\}\biggr ). 
\]
This implies that 
$h_{\mu,S}^*(\{A\cup B, (A\cup B)^c\})\leq h_{\mu,S}^*(\{A, A^c\})+h_{\mu,S}^*(\{B, B^c\}) $.
Then $h_{\mu,S}^*(\{A\cup B, (A\cup B)^c\})=0$ and $A\cup B\in\mathcal{K}$.
This shows that $\mathcal{K}$ is a subalgebra of $\mathcal{B}$.

To show that $\mathcal{K}$ is a sub-$\sigma$-algebra of $\mathcal{K}$,
we only need to show that $\mathcal{K}$ is a monotone class.
Let $(A_i)_{i=1}^\infty$ be an increasing sequence in $\mathcal{K}$ and $A=\bigcup_{i=1}^\infty A_i$.
For each $i\in\mathbb{N}$, let $\alpha_i=\{A_i,A_i^c\}$ and let
$\alpha =\{A,A^c\}$.
It is clear that $\alpha_i\to \alpha$ in $(\mathfrak{P}_2,\widetilde{\rho})$ as $i\to\infty$.
By Lemma~\ref{lem:partition-entropy},
$h^*_{\mu,S}(\alpha_i)\to h^*_{\mu,S}(\alpha)$ as $i\to\infty$.
Since each $A_i\in \mathcal{K}$, $h^*_{\mu,S}(\alpha_i)=0$.
Then $h^*_{\mu,S}(\alpha)=0$ and $A\in\mathcal{K}$.

As $G$ is measure-preserving, it is easy to see that for any $g\in G$, $h^*_{\mu,G}(\{A,A^c\})=h^*_{\mu,G}(g^{-1}\{A,A^c\})$.
So if $S=G$, 
then the sub-$\sigma$-algebra $\mathcal{K}$ is $G$-invariant.
\end{proof}

\begin{rem}
If in addition $(X,\mathcal{B},\mu,G)$ is a measure-preserving  system, then the assertions in Proposition~\ref{prop:measura-actions-2} are also equivalent to 
\begin{enumerate}
    \item[(3')] every $f\in L^2(\mu)$ is $S$-almost periodic.
\end{enumerate}
\end{rem} 

\subsection{The complexity of amenable group actions}
We say that a countably infinite discrete group $G$ is \emph{amenable} if there is a sequence of nonempty finite subsets $(F_n)_{n=1}^\infty$ of $G$ such that for every $g \in G$, $\lim_{n\to\infty} \frac{|g F_n \triangle F_n|}{|F_n|} = 0$. 
Such a sequence $(F_n)_{n=1}^\infty$ is called a  \emph{F{\o}lner sequence} for $G$. 
If in addition there exists $c>0$ such that $|\bigcup_{1\leq i<n}F_i^{-1}F_n|\leq c|F_n|$ for all $n\geq 2$, then we say that the F{\o}lner sequence $(F_n)_{n=1}^\infty$ is \emph{tempered}.
Note that every F{\o}lner sequence has a tempered subsequence, see \cite[Proposition 1.4]{L01}.

We say that a function $f\in L^2(\mu)$ is \emph{almost periodic} if $\{f\circ g\colon g\in G\}$ is precompact in $L^2(\mu)$.
For a finite partition $\alpha$ of $X$, 
\emph{the maximal pattern entropy} of $\alpha$ is defined by 
$h^*_\mu(\alpha)=h^*_{\mu,G}(\alpha)$.
We say that a finite partition $\alpha$ of $X$ has \emph{bounded mean complexity with respect to $(F_n)_{n=1}^\infty$} 
if for any $\varepsilon>0$, there exists a constant $C=C(\varepsilon)\in \mathbb{N}$ such that  $\mathcal{C}(\{g^{-1}\alpha\colon g\in F_n\},\varepsilon)\leq C$ for any $n\in \bbn$,
and $\alpha$ is \emph{mean equicontinuous with respect to $(F_n)_{n=1}^\infty$}
if the sequence $(\{g^{-1}\alpha\colon g\in F_n\})_{n=1}^\infty$ of finite sets of partitions is mean equicontinuous.

\begin{prop}\label{prop:mps-mean-cp}
Let $G$ be a countably infinite discrete amenable group and $(F_n)_{n=1}^\infty$ be a F{\o}lner sequence in $G$. 
For a measure-preserving system $(X,\mathcal{B},\mu, G)$,
a finite partition $\alpha$ of $X$ has bounded mean complexity along $G$ if and only if $\alpha$ has bounded mean complexity with respect to $(F_n)_{n=1}^\infty$.
\end{prop}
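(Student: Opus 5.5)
The direction ($\Rightarrow$) is immediate. For every $n$, the set $\{g^{-1}\alpha\colon g\in F_n\}$ is a finite subset of $\{g^{-1}\alpha\colon g\in G\}$. So the uniform bound $\mathcal{C}(E,\varepsilon)\le C(\varepsilon)$, which holds for all finite subsets $E$ of the orbit, applies in particular to these sets.

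For ($\Leftarrow$), I would use Proposition~\ref{prop:measura-actions-1} with $S=G$. It suffices to show that for each $A\in\alpha$ the family $\{\mathbf{1}_{g^{-1}A}\colon g\in G\}$ is precompact in $L^2(\mu)$, since that proposition then yields bounded mean complexity along $G$. I argue by contradiction, following the proof of (3)$\Rightarrow$(1) in Proposition~\ref{prop:precpt-bd-mean-complexity}. Suppose precompactness fails for some $A\in\alpha$. Then there are $\varepsilon>0$ and, for $C=C(\varepsilon)$ given by the Følner bound, a finite set $T=\{t_1,\dots,t_m\}\subset G$ with $m\ge 4\cdot 2^{C}$ such that
\[
\|\mathbf{1}_{t_i^{-1}A}-\mathbf{1}_{t_j^{-1}A}\|_{L^1(\mu)}\ge 20\varepsilon \quad\text{for } i\ne j.
\]
By the Følner property, choose $n$ so large that $F_n' := \bigcap_{t\in T} t^{-1}F_n$ satisfies $|F_n'|\ge \frac12|F_n|$. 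Put $E=\{k^{-1}\alpha\colon k\in F_n\}$.

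Exactly as in that proof, take $x_1,\dots,x_C$ with $\mu\bigl(\bigcup_i B_{H_E}(x_i,\varepsilon)\bigr)>1-\varepsilon$. Form the disjoint pieces $B_0,B_1,\dots,B_C$ and the representatives $y_i$, and define for each $k\in F_n$ the step function $f_k$ on the partition $\{B_0,\dots,B_C\}$ by $f_k=\mathbf{1}_{k^{-1}A}(y_i)$ on $B_i$ and $f_k=0$ on $B_0$. The same computation gives $\sum_{k} \|f_k-\mathbf{1}_{k^{-1}A}\|_{L^1(\mu)}<5\varepsilon N$, where the sum and $N$ are taken over the index set of $E$. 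Now average over $h\in F_n'$. Each $k\in F_n$ is of the form $th$ with $t\in T$, $h\in F_n'$ for at most $m$ pairs $(t,h)$. Hence
\[
\sum_{h\in F_n'}\sum_{t\in T}\|f_{th}-\mathbf{1}_{(th)^{-1}A}\|_{L^1(\mu)}\le 5m\varepsilon N,
\]
and some $h$ has inner sum at most $5m\varepsilon N/|F_n'|$. Provided $N/|F_n'|$ is bounded by about $2$, at least half of the $t\in T$ then satisfy $\|f_{th}-\mathbf{1}_{(th)^{-1}A}\|_{L^1(\mu)}<20\varepsilon$; the constants need adjusting, for instance by starting with $40\varepsilon$-separation. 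Since there are at most $2^C$ possible step functions and at least $m/2>2^C$ such $t$, the pigeonhole principle gives $t\ne t'$ with $f_{th}=f_{t'h}$. Therefore $\|\mathbf{1}_{h^{-1}t^{-1}A}-\mathbf{1}_{h^{-1}t'^{-1}A}\|_{L^1(\mu)}$ is small. Because $h$ preserves $\mu$, this quantity equals $\|\mathbf{1}_{t^{-1}A}-\mathbf{1}_{t'^{-1}A}\|_{L^1(\mu)}$, contradicting the separation.

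The main obstacle is the bookkeeping between the set $E=\{k^{-1}\alpha\colon k\in F_n\}$ and the multiset indexed by $F_n$. The Hamming distance $H_E$ counts distinct partitions, while the averaging over $h$ is naturally indexed by group elements, so distinct $k$ with $k^{-1}\alpha=k'^{-1}\alpha$ may cause a mismatch. My first attempt would be to run the Hamming estimate directly over the multiset indexed by $F_n$. For that I need the $H_E$-balls to control the multiplicity-weighted disagreement. This holds if the multiplicities $|\{k\in F_n\colon k^{-1}\alpha=\beta\}|$ are roughly uniform, which is the case because they count elements of left cosets of the stabilizer of $\alpha$ intersected with an almost invariant set. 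Failing that, I would replace $G$ by the action on the orbit of $\alpha$ and argue with the image of $F_n$ directly, noting that distinct $t$ in the separated family give distinct partitions $(th)^{-1}\alpha$.
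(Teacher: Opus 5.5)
Your argument is correct, given the multiset convention discussed in the second paragraph, and it is organized differently from the paper's. The paper proves ($\Leftarrow$) directly. For an arbitrary finite $E\subset G$ it takes $m$ and $H=\{h\in F_m\colon Eh\subset F_m\}$ with $|H|\ge\frac12|F_m|$. It bounds the average of $H_{(Eh)^{-1}\alpha}(x,x_k)$ over $h\in H$ by $2H_{F_m^{-1}\alpha}(x,x_k)$. This yields one $h$ for which the at most $C$ centres chosen for $F_m$ already give $\mathcal{C}((Eh)^{-1}\alpha,3\varepsilon)\le C$, and $h$ is then removed with Lemma~\ref{lem:C-E-eps}.

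You use the same translation trick ($Th\subset F_n$ for many $h$), but only for a finite separated family $T$. Combined with the pigeonhole argument of (3)$\Rightarrow$(1) in Proposition~\ref{prop:precpt-bd-mean-complexity}, it shows that each $\mathbf{1}_A$ is almost periodic. Proposition~\ref{prop:measura-actions-1}, and hence all of Theorem~\ref{thm:main1}, then supplies the complexity bound along $G$; invariance of $\mu$ is used only at the end to strip off $h$. The paper's route is self-contained and quantitative: the constant for $3\varepsilon$ along $G$ is the F{\o}lner constant for $\varepsilon^2$. Yours is qualitative and leans on Section 3, but it proves (3)$\Rightarrow$(2) of Theorem~\ref{thm:main2} directly.

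Your last paragraph should be dropped, because its justification is false and the argument does not need it. The paper reads $H_{F_n^{-1}\alpha}$ as an average over the index set $F_n$, that is $H_{F_n^{-1}\alpha}(x,y)=\frac{1}{|F_n|}\sum_{g\in F_n}H_\alpha(gx,gy)$; this is how it computes in this proof and in Proposition~\ref{prop:mps-mean-eq}. With that convention your ``first attempt'' is already complete: $N=|F_n|$ and $N/|F_n'|\le 2$ exactly.

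The multiplicities $|\{k\in F_n\colon k^{-1}\alpha=\beta\}|$ are not roughly uniform in general, and the image of $F_n$ in the orbit need not be F{\o}lner-like. For example:
\begin{itemize}
\item Let $\mathbb{Z}^2$ act through its first coordinate by a weakly mixing rigid transformation $T$; such transformations exist.
\item Let $\alpha=\{A,A^c\}$ with $0<\mu(A)<1$.
\item Let $F_n=([0,n]\times[0,n^2])\cup(S_n\times\{0\})$, where $S_n$ consists of about $n^{3/2}$ integers $q>n$ with $\mu(T^{-q}A\,\Delta\,A)<1/n$.
\end{itemize}
This is a F{\o}lner sequence, with multiplicities $n^2+1$ on $[0,n]$ and $1$ on $S_n$. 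Counted without multiplicity, most members of $\{g^{-1}\alpha\colon g\in F_n\}$ lie close to $\alpha$, so their complexity is uniformly bounded. Yet $\mathbf{1}_A$ is not almost periodic. So under a literal set-based reading the proposition fails, and no repair of the kind you sketch can work. The multiset convention is essential, not a technicality to be fixed afterwards.
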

\begin{proof}
($\Rightarrow$) It follows from the definition directly.

($\Leftarrow$) 
Fix  $0<\varepsilon<\frac{1}{2}$,
by the assumption that $\alpha$ has bounded mean complexity with respect to $\{F_n\}_{n\in \bbn}$, 
there is $C=C(\varepsilon) \in \bbn$ such that for any $n\in\bbn$,
there is finite set $D_n\subset X$ with $|D_n|\leq C$ 
such that
\[
    \mu\biggl(\bigcup_{x\in D_n} B_{H_{F_n^{-1}\alpha}}(x,\varepsilon^2)\biggr)>1-\varepsilon^2.
\]
Now fix a finite subset $E$ of $G$ and
let $E^{-1}\alpha= \{g^{-1}\alpha\colon g\in E\}$.
Since $\{F_n\}_{n\in \mathbb{N}}$ is a F{\o}lner sequence,  
there exists $m\in \bbn$ such that any $g\in E$, 
\[
    \frac{|gF_m \Delta F_m|}{|F_m|}<\frac{1}{2|E|}.
\]
For each $g\in E$, 
let $H_g=\{h \in F_m : gh \in F_m\}$.
Then $|H_g|\geq(1-\frac{1}{2|E|})|F_m|$. 
So $H=\bigcap_{g\in E} H_g$ satisfies 
$|H|\geq\frac{1}{2}|F_m|$.
It is clear that $h\in H$ if and only if $Eh\subset F_m$.

Enumerate $D_m=\{x_1,\dotsc,x_\ell\}\subset X$.
Let 
\[
B_1=B_{H_{F_m^{-1}\alpha}}(x_1,\varepsilon^2)
\text{ and }
    B_k=B_{H_{F_m^{-1}\alpha}}(x_k,\varepsilon^2) \setminus \bigcup_{j=1}^{k-1} B_{H_{F_m^{-1}\alpha}}(x_j,\varepsilon^2) \text{ for } k=2,\dotsc,\ell.
\]
Then
\[
\mu\biggl(\bigcup_{i=1}^\ell B_i\biggr)=
\mu\biggl(\bigcup_{x\in D_m} B_{H_{F_m^{-1}\alpha}}(x,\varepsilon^2)\biggr)>1-\varepsilon^2.
\]
For any $1\leq k \leq \ell$ and $ h \in H$, 
let
\[
    B_k^h=\{x \in B_k : H_{(Eh)^{-1}\alpha}(x, x_k)<\varepsilon\}.
\]
For any $x \in B_k$, let $H_k(x)=\{h\in H \colon x \in B_k^h\}$.
Thus,
\begin{align*}
    \varepsilon \frac{|H\setminus H_k(x)|}{|H|} &\leq \frac{1}{|H|}\sum_{h \in H} H_{(Eh)^{-1}\alpha}(x, x_k) = \frac{1}{|H|}\sum_{h \in H} \frac{1}{|Eh|}\sum_{g \in Eh} H_{\alpha}(gx, gx_k) \\
    &\leq \frac{1}{|E||H|}\sum_{g \in E}\sum_{h \in H} H_{\alpha}(ghx, ghx_k) 
    \leq
    \frac{2}{|F_m|}\sum_{g \in F_m} H_{\alpha}(gx, gx_k) \\
    &\leq 2 H_{F_m^{-1}\alpha}(x, x_k) < 2\varepsilon^2.
\end{align*}
Then $|H_k(x)|>(1-2\varepsilon)|H|$. So
\begin{align*}
    \sum_{h \in H}\mu(B^h_k)= \int_{B_k} \sum_{h \in H} \mathbf{1}_{B^h_k}(x) d\mu =\int_{B_k} |H_k(x)| d\mu > (1-2\varepsilon)|H| \mu(B_k).
\end{align*}

We show that there is $h \in  H $ such that $\mu(\bigcup_{k=1}^\ell B_k^h)>1-3\varepsilon$.
If not, then for any $h \in H$, $\mu(\bigcup_{k=1}^\ell B_k^h)\leq 1-3\varepsilon$. 
Thus
\begin{align*}
    (1-2\varepsilon)|H| \mu\biggl(\bigcup_{k=1}^\ell B_k\biggr) 
    \leq \sum_{h \in H}\mu\biggl(\bigcup_{k=1}^\ell B_k^h\biggr) 
    \leq |H|(1-3\varepsilon),
\end{align*}
and  
\[\mu\biggl(\bigcup_{k=1}^\ell B_k\biggr) \leq \frac{1-3\varepsilon}{1-2\varepsilon}.
\]
As $\varepsilon\in (0,\frac{1}{2})$,
it is easy to check that $\frac{1-3\varepsilon}{1-2\varepsilon}<1-\varepsilon^2$.
Then the above formula contradicts to the fact $\mu\bigl(\bigcup_{i=1}^\ell B_i\bigr)>1-\varepsilon^2$.
So there is some $h \in H$ such that 
$\mu(\bigcup_{k=1}^\ell B_k^h)>1-3\varepsilon$,
which implies that $\mathcal{C}((Eh)^{-1}\alpha ,3\varepsilon) \leq C$.
By Lemma~\ref{lem:C-E-eps}, $\mathcal{C}(E^{-1}\alpha ,3\varepsilon)=\mathcal{C}((Eh)^{-1}\alpha ,3\varepsilon) \leq C$. 
Therefore, $\alpha$ has bounded mean complexity along $G$. 
\end{proof}

\begin{prop} \label{prop:mps-mean-eq}
Let $G$ be a countably infinite discrete amenable group and $(F_n)_{n=1}^\infty$ be a F{\o}lner sequence in $G$. 
For a measure-preserving system $(X,\mathcal{B},\mu, G)$
and a finite partition $\alpha$ of $X$,
\begin{enumerate}
    \item if $\alpha$ is mean equicontinuous with respect to $(F_n)_{n=1}^\infty$, then it has bounded mean complexity with respect to $(F_n)_{n=1}^\infty$;
    \item if $(F_n)_{n=1}^\infty$ is tempered and $\alpha$ has bounded mean complexity with respect to $(F_n)_{n=1}^\infty$, then $\alpha$ is mean equicontinuous with respect to $(F_n)_{n=1}^\infty$.
\end{enumerate}
\end{prop}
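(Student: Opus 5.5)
Part (1) is immediate from Lemma~\ref{lem:partition-seq-mean-equi}. Set $E_n=\{g^{-1}\alpha\colon g\in F_n\}$. Mean equicontinuity with respect to $(F_n)$ gives, for each $\varepsilon>0$, disjoint sets $B_1,\dotsc,B_k$ with $\mu(\bigcup_i B_i)>1-\varepsilon$ and $H_{E_n}(x,y)<\varepsilon$ for all $n$ and all $x,y$ in a common $B_i$. Pick $x_i\in B_i$. Then $B_i\subset B_{H_{E_n}}(x_i,\varepsilon)$ for every $n$, so $\mathcal{C}(E_n,\varepsilon)\le k$ uniformly in $n$, which is bounded mean complexity.

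For part (2), I would first convert the hypothesis into almost periodicity. Proposition~\ref{prop:mps-mean-cp} turns bounded mean complexity with respect to $(F_n)$ into bounded mean complexity along $G$. Proposition~\ref{prop:measura-actions-1} with $S=G$ then says that each $\mathbf{1}_A$, $A\in\alpha$, is almost periodic, and that $\{g^{-1}\alpha\colon g\in G\}$ is precompact in $(\mathfrak{P}_r,\widetilde\rho)$.

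Fix $\varepsilon>0$ and choose a finite $\varepsilon$-net $\beta_1,\dotsc,\beta_m$ of this orbit. Let $\gamma=\bigvee_j\beta_j$. Note that $\mu(A\Delta B)$ is $G$-invariant, so for every $g\in G$ there is $j(g)$ with $\widetilde\rho(g^{-1}\alpha,\beta_{j(g)})<\varepsilon$.

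The aim is to show that for $x,y$ in the same atom $C$ of $\gamma$ we have $\limsup_n H_{E_n}(x,y)\le$ small, at least outside a small exceptional set. Write $D_g=\{x\colon g^{-1}\alpha$ and $\beta_{j(g)}$ disagree at $x\}$, meaning $x$ lies in $A_i\cap B_{\sigma(i)}^c$ for the optimal matching $\sigma$. Then $\mu(D_g)<\varepsilon$. If $x,y\in C$, then $x,y$ lie in the same atom of every $\beta_j$, so $H_{g^{-1}\alpha}(x,y)\le \mathbf{1}_{D_g}(x)+\mathbf{1}_{D_g}(y)$. Hence
\[
H_{E_n}(x,y)\le \frac{1}{|F_n|}\sum_{g\in F_n}\bigl(\mathbf{1}_{D_g}(x)+\mathbf{1}_{D_g}(y)\bigr).
\]

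The main obstacle is that $D_g$ is not of the form $g^{-1}D$ for a fixed set $D$. A pointwise ergodic theorem therefore does not apply directly, yet we need control of $\limsup_n\frac1{|F_n|}\sum_{g\in F_n}\mathbf{1}_{D_g}(x)$ for a.e.\ $x$. My first attempt is to remove the dependence on $g$ using almost periodicity in a finite-dimensional way. Let $\mathcal{F}\subset L^2(\mu)$ be the closed $G$-invariant algebra generated by the almost periodic functions, which is the Kronecker-type factor. Each $\mathbf{1}_{A}\circ g$ is approximated within $\varepsilon$ by its projection onto a finite-dimensional invariant subspace $V$, and $V$ is spanned by finitely many functions $\phi_1,\dots,\phi_d$. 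Since $\{U_g|_V\}$ is a precompact group of operators, the map $x\mapsto (\phi_1(gx),\dots,\phi_d(gx))$ expresses $\mathbf{1}_A(gx)$ approximately as $\Phi(u_g, \phi(x))$ for a fixed continuous $\Phi$ and $u_g$ ranging in a compact group. The error function then becomes $\mathbf{1}_{D}(gx)$ for one fixed set $D$ of measure $O(\varepsilon)$. (Here $D=\{x\colon |\mathbf{1}_A(x)-\text{proj}_V\mathbf{1}_A(x)|>1/3\}$, and I use $U_g$ commuting with the projection.)

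Once the error has the form $\mathbf{1}_D\circ g$, Lindenstrauss' pointwise ergodic theorem for the tempered Følner sequence gives $\lim_n\frac1{|F_n|}\sum_{g\in F_n}\mathbf{1}_D(gx)=\mathbb{E}(\mathbf{1}_D\mid\mathcal{I})(x)$ a.e. By Markov's inequality this is $\le\sqrt\varepsilon$ off a set of measure $\le\sqrt\varepsilon$. Finally, I would partition $X$ into the cells where $\phi(x)$ lies in a fine grid of $\mathbb{R}^d$ (quantized, truncated outside a large ball), intersected with the good set. For $x,y$ in the same cell, $\Phi(u_g,\phi(x))$ and $\Phi(u_g,\phi(y))$ round to the same value for every $g$, by uniform continuity over the compact group. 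Hence $\limsup_n H_{E_n}(x,y)\le 2\sqrt\varepsilon$, which after rescaling $\varepsilon$ gives mean equicontinuity.
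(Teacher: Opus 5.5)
Part (1) is the paper's argument. Part (2) is also correct in substance, but it takes a genuinely different route from the paper.

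\textbf{The paper's route.} The paper never passes through almost periodicity. It applies Lindenstrauss' pointwise theorem to the product system, to $\mathbf{1}_Y$ with $Y=X\times X\setminus\bigcup_i A_i\times A_i$. This shows that $\lim_n H_{E_n}(x,y)$ exists $\mu\times\mu$-a.e. Egorov and Fubini then make this convergence uniform, beyond some time $N$, on a large set of pairs. The bounded-complexity hypothesis is used only at that single time $N$: the at most $C$ balls $B_{H_{E_N}}(z_i,\varepsilon)$, chosen representatives $y_i$, and the triangle inequality give $H_{E_n}(x,y)<5\varepsilon$ for all $n\ge N$.

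\textbf{Your route.} You use the hypothesis only qualitatively. Proposition~\ref{prop:mps-mean-cp} and Proposition~\ref{prop:measura-actions-1} give almost periodicity of each $\mathbf{1}_A$. You then need the structure theorem for compact vectors of a unitary representation: they lie in the closed span of finite-dimensional invariant subspaces. This is a Peter--Weyl type fact, and you should cite it. Taking an orthonormal basis $\phi$ of $V$ and the unitary matrices $c(g)$ of $U_g|_V$, you get $\psi(gx)=\langle a,c(g)\phi(x)\rangle$ for all $g$ off one null set, since $G$ is countable. Hence $|\psi(gx)-\psi(gy)|\le |a|\,|\phi(x)-\phi(y)|$ uniformly in $g$. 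The ergodic theorem is then needed only on $X$, for the single set $D$.

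\textbf{Comparison.} Your approach gives a direct proof of (2)$\Rightarrow$(4) in Theorem~\ref{thm:main2}. Its equicontinuity cells are essentially Kronecker-measurable. The cost is representation-theoretic machinery and reliance on Theorem~\ref{thm:main1} and Proposition~\ref{prop:mps-mean-cp}. The paper's argument is elementary and self-contained.

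\textbf{One statement to fix.} The claim that $\Phi(u_g,\phi(x))$ and $\Phi(u_g,\phi(y))$ ``round to the same value for every $g$'' is false as written: close values such as $0.49$ and $0.51$ round differently. What holds, and what you actually need, is the following. Take $D$ to be the union of the error sets over all atoms, using one $V$ that approximates every $\mathbf{1}_{A}$ with $A\in\alpha$. Choose cells small enough that $|a|\,|\phi(x)-\phi(y)|<1/3$. Then for $x,y$ in a common cell,
$H_{g^{-1}\alpha}(x,y)\le \mathbf{1}_D(gx)+\mathbf{1}_D(gy)$.
This is because off $D$ each $\psi(gx)$ lies within $1/3$ of $\mathbf{1}_A(gx)\in\{0,1\}$.

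The paragraph with the $\varepsilon$-net, $\gamma$ and $D_g$ plays no role in the final argument and can be dropped.
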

\begin{proof}
For each $n\in\mathbb{N}$, let $E_n=\{g^{-1}\alpha\colon g\in F_n\}$.

(1) As $\alpha$ is mean equicontinuous with respect to $(F_n)_{n=1}^\infty$,
by Lemma~\ref{lem:partition-seq-mean-equi}, 
for any $\varepsilon>0$, there exist 
pairwise disjoint measurable subsets $B_1,B_2,\dotsc,B_k$
with $\mu(\bigcup_{i=1}^k B_i)>1-\varepsilon$ such that 
for every $x,y\in B_i$ for some $i\in\{1,2,\dotsc,k\}$, one has $H_{E_n} (x,y) <\varepsilon$ for all $n\in\mathbb{N}$. 
Fix each $i\in\{1,2,\dotsc,k\}$, pick $x_i\in B_i$.
Then $B_i\subset B_{H_{E_n}}(x_i,\varepsilon)$ for all $n\in\mathbb{N}$ and $i\in\{1,2,\dotsc,k\}$.
This shows that $\mathcal{C}(E_n,\varepsilon)\leq k$ for all $n\in\mathbb{N}$.
Thus, $\alpha$ has bounded mean complexity with respect to $(F_n)_{n=1}^\infty$.

(2) 
As $\alpha$ has bounded mean complexity with respect to $(F_n)_{n=1}^\infty$,
for any $\varepsilon>0$, there exists a constant $C\in \mathbb{N}$ such that for any $n\in\mathbb{N}$ there exists a finite subset $D_n$ of $X$ with $|D_n|\leq C$ such that 
\[
\mu\biggl(\bigcup_{x\in D_n} B_{H_{E_n}}(x,\varepsilon)\biggr) >1-\varepsilon.
\]
Enumerate $\alpha$ as $\{A_1,A_2,\dotsc,A_r\}$.
Let $Y=X\times X \setminus \bigcup_{i=1}^r A_i \times A_i$.
Then for any $x,y\in X$,
\[
    H_{E_n}(x,y)=\frac{1}{|F_n|}\sum_{g \in F_n}\mathbf{1}_Y(gx,gy).
\]
Since the F{\o}lner sequence $(F_n)_{n\in \bbn}$ is tempered, 
applying the pointwise ergodic convergence theorem (see \cite[Theorem 1.2]{L01}),
one has that  the limit 
\[
    \lim_{n\to\infty} H_{E_n}(x,y)
\]
exists for $\mu\times \mu$-a.e. $(x,y)\in X\times X$.
Thus, for a given $0<\eta < \frac{\varepsilon}{2C}$, 
by Egorov's theorem there exists a subset $R$ of $X\times X$ with $\mu\times \mu(R)>1-\eta^2$ and $N\in\bbn$
such that for any $(x,y)\in R$, one has
\[
|H_{E_n}(x,y)-H_{E_N}(x,y)|< \eta,\quad \forall\ n\geq N.  
\]
Then by Fubini's theorem there is a measurable subset $X_0$ of $X$ with $\mu(X_0)>1-\eta$ such that for any $x\in X_0$,
$\mu(R_x)>1-\eta$, where $R_x=\{y\in X\colon (x,y)\in R\}$.
Enumerate $D_{N}=\{z_1, \dotsc, z_m\}$. 
Clearly $m\leq C$.
Let
\[
I=\bigl\{1\leq i\leq m\colon 
X_0\cap B_{H_{E_N}} (z_i, \varepsilon) \neq\emptyset\bigr\}.
\]
Then $|I|\leq m$. 
For each $i\in I$, pick $y_i\in X_0\cap  B_{H_{E_N}}(z_i, \varepsilon)$.
For each $i\in\{1,2,\dotsc,m\}$, let
\[
    B_i= X_0 \cap \bigcap_{j\in I} R_{y_j}
\cap   B_{H_{E_N}}(z_i, \varepsilon).
\]
Then 
\begin{align*}
\mu\biggl(\bigcup_{i=1}^m B_i\biggr)&=
\mu\biggl( X_0 \cap \bigcap_{j\in I} R_{y_j}
\cap \bigcup_{i=1}^m B_{H_{E_N}}(z_i, \varepsilon)
\biggr)\\
&\geq 1-\eta-C\eta-\varepsilon>1-2\varepsilon.
\end{align*}
For any $x,y\in X$ and $n\geq N$, 
if $x,y\in B_i$ for some $i\in\{1,2,\dotsc,m\}$,
then 
\begin{align*}
    H_{E_n}(x,y)&\leq H_{E_n}(x,y_i)+ H_{E_n}(y_i,y)\\
    &\leq H_{E_N}(x,y_i)+\eta+ H_{E_N}(y_i,y)+\eta \\
    &\leq 2\varepsilon +\eta+2\varepsilon+\eta <5\varepsilon 
\end{align*} 
This implies that $\alpha$ is mean equicontinuous with respect to $(F_n)_{n=1}^\infty$.
\end{proof}

Combining Propositions~\ref{prop:measura-actions-2}, \ref{prop:mps-mean-cp} and~\ref{prop:mps-mean-eq}, we finish the proof of Theorem~\ref{thm:main2}.

By \cite[Theorem 3.12]{G03}, $(X,\mathcal{B}_X,\mu, G)$ has  discrete spectrum  if and only if every $f\in L^2(\mu)$ is almost periodic.
The \emph{maximal pattern entropy} of $(X,\mathcal{B}_X,\mu, G)$ is defined by $h^*_\mu(X,G)=\sup_{\alpha\in\mathfrak{P}}h^*_\mu(\alpha)$. 
Applying Theorem~\ref{thm:main2}, we have the following characterizations of discrete spectrum.
Note that the equivalence of (1), (4) and (5) in Theorem~\ref{thm:mps-equi-result} is already proved in \cite[Theorem 1.3]{YZZ21}, 
but the remaining equivalent characterizations are previously unknown.

\begin{thm}\label{thm:mps-equi-result} 
Let $(X,\mathcal{B}_X,\mu, G)$ be measure-preserving system with $G$ being a countably infinite discrete amenable group.
Then the following assertions are equivalent:
\begin{enumerate}
\item $(X,\mathcal{B}_X,\mu, G)$ has discrete spectrum;
\item the maximal pattern entropy of $(X,\mathcal{B}_X,\mu, G)$ is zero;
\item  every finite partition of $X$ has bounded mean  complexity along $G$;
\item every finite partition of $X$ has bounded mean  complexity with respect to any F{\o}lner sequence of $G$;
\item every finite partition of $X$ with two atoms has bounded mean  complexity with respect to some F{\o}lner sequence of $G$;
\item every finite partition of $X$ is  mean equicontinuous with respect to any tempered F{\o}lner sequence of $G$;
\item every finite partition of $X$  with two atoms is  mean equicontinuous with respect to some tempered F{\o}lner sequence of $G$.
\end{enumerate}
\end{thm}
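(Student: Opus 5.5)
The plan is to go through Proposition~\ref{prop:measura-actions-2} with $S=G$ and the remark following it, and then use Propositions~\ref{prop:mps-mean-cp} and~\ref{prop:mps-mean-eq} to move between ``along $G$'' and ``with respect to a F{\o}lner sequence''.

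First, (1)$\Leftrightarrow$(2)$\Leftrightarrow$(3). By \cite[Theorem 3.12]{G03}, discrete spectrum means every $f\in L^2(\mu)$ is almost periodic, which is condition (3') of the remark after Proposition~\ref{prop:measura-actions-2} with $S=G$. By definition, $h^*_\mu(X,G)=0$ exactly when $h^*_{\mu,G}(\alpha)=0$ for every $\alpha\in\mathfrak{P}$. This is assertion (1) of Proposition~\ref{prop:measura-actions-2}, and assertion (5) there is our (3).

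To justify (3') inside the remark, I would show two things. Almost periodic functions form a closed linear subspace (the earlier proposition, part (1)). Simple functions are dense in $L^2(\mu)$. Hence almost periodicity of every $\mathbf{1}_A$ (assertion (4) there) gives almost periodicity of every $f$. The converse is trivial.

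Next, (3)$\Leftrightarrow$(4)$\Rightarrow$(5). Proposition~\ref{prop:mps-mean-cp} says that bounded mean complexity along $G$ is equivalent to bounded mean complexity with respect to any given F{\o}lner sequence. So (3)$\Leftrightarrow$(4). Since amenability supplies at least one F{\o}lner sequence, (4)$\Rightarrow$(5) is immediate.

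For (5)$\Rightarrow$(3), fix a two-atom partition $\alpha$. Proposition~\ref{prop:mps-mean-cp}, applied with that F{\o}lner sequence, shows $\alpha$ has bounded mean complexity along $G$. This is assertion (6) of Proposition~\ref{prop:measura-actions-2}, which is equivalent to assertion (5) there, namely our (3).

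Then the mean equicontinuity statements. For (4)$\Rightarrow$(6), take any tempered F{\o}lner sequence; by (4) every partition has bounded mean complexity with respect to it, and Proposition~\ref{prop:mps-mean-eq}(2) gives mean equicontinuity. (6)$\Rightarrow$(7) needs the existence of a tempered F{\o}lner sequence, which follows from \cite[Proposition 1.4]{L01} (every F{\o}lner sequence has a tempered subsequence). For (7)$\Rightarrow$(5), Proposition~\ref{prop:mps-mean-eq}(1) turns mean equicontinuity of each two-atom partition into bounded mean complexity with respect to the same (tempered, hence F{\o}lner) sequence.

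The only non-bookkeeping point is the reduction from partitions with two atoms to all partitions, together with the passage from indicators to all of $L^2$. Proposition~\ref{prop:measura-actions-2} handles the first, via $\bigvee_{A\in\alpha}\{A,A^c\}=\alpha$ and the subadditivity argument used in Proposition~\ref{prop:precpt-L2}. The density argument above handles the second. Everything else is direct citation.
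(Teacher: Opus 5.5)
Your proposal is correct and takes essentially the paper's route. The paper deduces this theorem from \cite[Theorem 3.12]{G03} and Theorem~\ref{thm:main2}, which is itself the combination of Propositions~\ref{prop:measura-actions-2}, \ref{prop:mps-mean-cp} and~\ref{prop:mps-mean-eq} that you invoke directly; your closed-subspace-plus-density argument supplies condition (3') of the remark, and the cycle (5)$\Rightarrow$(3)$\Rightarrow$(4)$\Rightarrow$(6)$\Rightarrow$(7)$\Rightarrow$(5) together with the existence of (tempered) F{\o}lner sequences via amenability and \cite[Proposition 1.4]{L01} closes all the implications.
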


\section{The complexity of invariant measures of a topological dynamical system}

It is well known that every measurable-preserving system has a topological model, that is, it is metrically isomorphic to an invariant measure of a topological dynamical system, see e.g. \cite[Theorem 2.18]{G03}.
Hence,  in this section we study the complexity of an invariant measure in a topological dynamical system using the metric.
In what follows, we shall see that the metric is  only capable of defining global complexity, so our local results on partitions essentially extend the main results in \cite{XX24} and \cite{YZZ21}.

Let $(X,d)$ be a compact metric space and $G$ be a discrete group.
If a continuous map $\Psi\colon G\times X\to X$ satisfies 
\begin{enumerate}
    \item $\Psi(e,x)=x$, where $e$ is the identity of $G$;
    \item $\Psi(gh,x)=\Psi(g,\Psi(h,x))$ for all $g,h\in G$,
\end{enumerate}
then we say that $(X,d,G)$ is a \emph{topological dynamical system}. When there is no risk of ambiguity, we write $gx$ for $\Psi(g,x)$.

For a finite subset $F$ of $G$, we define a metric $\bar{d}_F$ as follows: for every $x,y\in X$,
\[
    \bar{d}_F(x,y) = \frac{1}{|F|}\sum_{g\in F}d(gx,gy).
\]
For $x\in X$ and $\varepsilon>0$, denote
\[
    B_{\bar{d}_F}(x,\varepsilon)=\{y\in X\colon \bar{d}_F(x,y)<\varepsilon\}.
\]
Let $\mu$ be a Borel probability measure on $X$.
For a finite subset $F$ of $G$ and $\varepsilon>0$,
denote
\[
    \mathcal{C}_d(F,\varepsilon)=
    \min\biggl\{|D|\colon D\subset X \text{ s.t. } \mu\biggl(\bigcup_{x\in D} B_{\bar{d}_F}(x,\varepsilon) \biggr)>1-\varepsilon\biggr\}.
\]
For a subset $S$ of $G$, we say that $\mu$ has \emph{bounded $(S,d)$-mean complexity} if 
for every $\varepsilon>0$, there exists a constant $C=C(\varepsilon)\in \mathbb{N}$ such that  $\mathcal{C}_d(F,\varepsilon)\leq C$ for any finite subset $F$ of $S$.

\begin{rem}
As $X$ is compact, there exists a finite partition $\alpha$ of $X$ such that the diameter of each atom in $\alpha$ is less than $\varepsilon$.
It is easy to check that $ \mathcal{C}_d(F,\varepsilon)\leq |\bigvee_{g\in F}g^{-1}\alpha|\leq |\alpha|^{|F|}$.
Then  $\mu$ has  bounded $(S,d)$-mean complexity if 
for every $\varepsilon>0$, there exists a constant $C\in \mathbb{N}$ and $N\in\mathbb{N}$ such that  $\mathcal{C}_d(F,\varepsilon)\leq C$ for all finite subsets $F$ of $S$ with $|F|\geq N$.
\end{rem} 

The concept of bounded $(S,d)$-mean complexity is the bounded $S$-max-mean-complexity in \cite{XX24}.
The following result is equivalent to the main result in \cite{XX24}.
Here we provide a new proof via the complexity of finite partitions. 

\begin{thm}\label{thm:tds-mps-cp}
Let $(X,d,G)$ be a topological dynamical system and $\mu$ be a $G$-invariant measure on $X$.
For a subset $S$ of $G$,
$\mu$ has bounded $(S,d)$-mean complexity if and only if $(X,\mathcal{B}_X,\mu,G)$ has bounded mean complexity along $S$.
\end{thm}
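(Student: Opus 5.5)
Throughout, I read ``$(X,\mathcal{B}_X,\mu,G)$ has bounded mean complexity along $S$'' as ``every finite partition of $X$ has bounded mean complexity along $S$''. The plan is to compare the Hamming balls $B_{H_{F^{-1}\alpha}}(x,\cdot)$, where $F^{-1}\alpha=\{g^{-1}\alpha\colon g\in F\}$, with the balls $B_{\bar d_F}(x,\cdot)$ for finite $F\subset S$, in both directions. The basic identity used is
\[
H_{F^{-1}\alpha}(x,y)=\frac{1}{|F|}\bigl|\{g\in F\colon gx,gy \text{ lie in different atoms of } \alpha\}\bigr|.
\]

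\emph{($\Leftarrow$).} Fix $\varepsilon>0$ and let $D=\max(1,\operatorname{diam}X)$. By compactness, choose a finite Borel partition $\alpha$ of $X$ whose atoms all have diameter less than $\varepsilon/2$, and set $\eta=\varepsilon/(2D)\le\varepsilon$. Suppose $H_{F^{-1}\alpha}(x,y)<\eta$.
\begin{itemize}
\item For $g\in F$ with $gx,gy$ in the same atom, $d(gx,gy)<\varepsilon/2$.
\item The remaining $g\in F$ form a fraction less than $\eta$ of $F$, and for each of them $d(gx,gy)\le D$.
\end{itemize}
Hence $\bar d_F(x,y)<\varepsilon/2+D\eta=\varepsilon$, so $B_{H_{F^{-1}\alpha}}(x,\eta)\subset B_{\bar d_F}(x,\varepsilon)$. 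Since $\eta\le\varepsilon$, this gives $\mathcal{C}_d(F,\varepsilon)\le\mathcal{C}(F^{-1}\alpha,\eta)\le C(\eta)$ uniformly in finite $F\subset S$. The constant $C(\eta)$ comes from bounded mean complexity of $\alpha$ along $S$.

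\emph{($\Rightarrow$).} Fix a partition $\alpha=\{A_1,\dots,A_r\}$ and $\varepsilon>0$. The obstacle is that atoms of $\alpha$ are only measurable, so metric closeness of $gx,gy$ does not force them into the same atom. I handle this by inner regularity.
\begin{enumerate}
\item Choose $\eta>0$ small and compact sets $K_i\subset A_i$ with $\mu\bigl(X\setminus U\bigr)<\eta$, where $U=\bigcup_i K_i$. The $K_i$ are disjoint compacts, so $\delta=\min_{i\ne j}d(K_i,K_j)>0$.
\item For finite $F$, invariance of $\mu$ gives
\[
\int\frac{1}{|F|}\sum_{g\in F}\mathbf 1_{X\setminus U}(gx)\,\dd\mu=\mu(X\setminus U)<\eta.
\]
By Markov's inequality, the set $W_F$ of points $x$ with $gx\notin U$ for fewer than $\sqrt\eta\,|F|$ elements $g\in F$ satisfies $\mu(W_F)>1-\sqrt\eta$.
\item Let $x,y\in W_F$ with $\bar d_F(x,y)<2\tau$. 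Then $gx,gy$ can lie in different atoms of $\alpha$ only if $d(gx,gy)\ge\delta$, or if $gx\notin U$, or if $gy\notin U$. Chebyshev bounds the first case, giving
\[
H_{F^{-1}\alpha}(x,y)\le \frac{2\tau}{\delta}+2\sqrt\eta.
\]
\end{enumerate}

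To build the cover, take $C_d(F,\tau)\le C(\tau)$ balls $B_{\bar d_F}(x_j,\tau)$ covering measure more than $1-\tau$. For each $j$ with $B_{\bar d_F}(x_j,\tau)\cap W_F\neq\emptyset$, pick $y_j$ in that intersection. Every $x$ in the intersection then satisfies $\bar d_F(x,y_j)<2\tau$, so $x\in B_{H_{F^{-1}\alpha}}(y_j,\varepsilon)$ provided $2\tau/\delta+2\sqrt\eta<\varepsilon$. The union of these intersections has measure more than $1-\tau-\sqrt\eta$.

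Choose the constants in the order $\eta$, then $\delta$, then $\tau$, so that $2\sqrt\eta<\varepsilon/2$, $2\tau/\delta<\varepsilon/2$ and $\tau+\sqrt\eta<\varepsilon$. Then $\mathcal{C}(F^{-1}\alpha,\varepsilon)\le C(\tau)$ for all finite $F\subset S$, which is the required bound. The main step needing care is the second one: it uses $G$-invariance of $\mu$ to get $W_F$ uniformly in $F$ and so to neutralize boundary effects near the atoms.
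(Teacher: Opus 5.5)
Your proof is correct and follows essentially the same route as the paper. For ($\Rightarrow$) you use inner regularity to get separated compacts $K_i\subset A_i$, $G$-invariance with a Markov bound to control how often $gx$ leaves $\bigcup_i K_i$, and Chebyshev to bound the $g$ with $d(gx,gy)\ge\delta$; for ($\Leftarrow$) you use a partition into sets of small diameter to turn small Hamming distance into small $\bar d_F$, and your bookkeeping (re-centering at $y_j\in W_F$, the factor $2$ from the triangle inequality, and the diameter term) is somewhat more careful than the paper's.
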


\begin{proof}
($\Rightarrow $) 
Fix a finite partition $\alpha=\{A_1,\dotsc,A_r\}$ of $X$.
Without loss of generality, we assume that $\mu(A_i)>0$ for any $1\leq i\leq r$. 
Fix an arbitrary $\varepsilon\in (0,1)$.
By the regularity of $\mu$, for any $A_i$,
there exists a compact subset $P_i$ of $A_i$
such that $\mu(A_i\setminus P_i)<\frac{\mu(A_i)\varepsilon^2}{2}$.
Let $U_0=X\setminus \bigcup^{r}_{i=1}P_i$.
Then $\mu(U_0)<\frac{\varepsilon^2}{2}$.
Let $\delta =\min \{ d(P_i,P_j),\frac{\varepsilon}{2}\colon 1\leq i<j\leq r\}$.

Since $\mu$ has bounded $(S,d)$-mean complexity, there exists $C>0$ such that for any $F\subset S$, 
there is finite measurable partition $\{B_0^F,B_1^F,\dotsc,B_C^F\}$ with $\mu(B_0^F)<\varepsilon \delta <\frac{\varepsilon^2}{2}$ and $\bar{d}_F(x,y)<\varepsilon \delta $ whenever $x,y \in B_j^F$ for some $j =1,\dotsc,C$ . Let $U=(B_0^F \cup U_{0})^c $.
Then $\mu(U)>1-\varepsilon^2$.
For any $F \subset S$ and $x\in X$, let $W_x=\{g \in F \colon gx \in U\}$ and 
\[
    W_F=\{x \in X \colon |W_x|>(1-\varepsilon)|F|\}.
\]
Then 
\begin{align*}
    \mu(U)=\int_X \frac{1}{|F|}\sum_{g \in F} 1_{U}(gx) d\mu
    &\leq \int_{W_F} \frac{1}{|F|}\sum_{g \in F} 1_{U}(gx) d\mu+\int_{X \setminus W_F} \frac{1}{|F|}\sum_{g \in F} 1_{U}(gx) d\mu \\ 
    & \leq \mu(W_F)+(1-\varepsilon)\mu(X \setminus W_F),
\end{align*}
which implies that $\mu(W_F)>1-\varepsilon$.
Now fix $x,y \in B^F_j\cap W_F \cap U_{0}^c $ for some $j=1,\dotsc,C$, and let $F(x,y)=\{g \in F :d(gx,gy)\geq\delta\}$. 
Since $\bar{d}_F(x,y)<\varepsilon \delta$, 
\[
    |F(x,y)|\delta \leq  \sum_{g \in F}d(gx,gy) <\varepsilon \delta |F|.
\]
Thus $|F(x,y)| < \varepsilon |F|$. 
Let $F^{-1}\alpha:=\{g^{-1}\alpha \colon g \in F\}$.
Observe that for every $g \in F' := (F \setminus F(x, y)) \cap W_x \cap W_y$, we have $d(gx, gy) < \delta$ with $gx,gy \in U_0^c$.
Notice that $d(x, y) < \delta$ with $x, y \in U_{0}^c$ implies $H_\alpha(x, y)=0$.
Then
\begin{align*} 
    H_{F^{-1}\alpha}(x,y)
    &=\frac{1}{|F|}\biggl(\sum_{g \in F \setminus F' }H_{\alpha}(gx,gy)+\sum_{g \in F' }H_{\alpha}(gx,gy)\biggr) \\
    &\leq\frac{1}{|F|}|F \setminus F'| 
    \leq \frac{|F(x,y)|}{|F|} +\frac{|F\setminus W_x|}{|F|} +\frac{|F\setminus W_y|}{|F|} \\
    &< 3\varepsilon.
\end{align*}
Since $\mu(\bigcup_{j=1}^C B_j^F \cap W_F \cap U_{0}^c )>1-\varepsilon -\varepsilon^2>1-3\varepsilon$,  we have $\mathcal{C}(F^{-1}\alpha,3\varepsilon)\leq C$ for any finite $F$ subset of $S$. 
Hence $\alpha$ has bounded mean complexity along $S$. 
By the arbitrariness of $\alpha$, we have that $(X,\mathcal{B}_X,\mu,G)$ has bounded mean complexity along $S$.

($\Leftarrow$)
Without loss of generality, we assume
that $d(x,y)\leq 1$
for any $x,y \in X$.
Fix an arbitrary $\varepsilon\in (0,1)$.
Let $\alpha=\{A_1,\dotsc,A_r\}$ be a finite partition of $X$ such that each $A_j$ has diameter smaller than $\varepsilon$. 
Observe that $H_{\alpha}(x,y)=0$ or $1$, if $H_{\alpha}(x,y)=0$, then   $d(x,y)<\varepsilon$. 
Since $(X,\mathcal{B}_X,\mu,G)$ has bounded mean complexity along $S$, there exists $C\in \mathbb{N}$ such that for any $F\subset S$, there is a partition $\{B_0^F,B_1^F\dotsc,B_C^F\}$ of $X$ with $\mu(B_0^F)<\varepsilon$ and 
$H_{F^{-1}\alpha}(x,y)<\varepsilon$ whenever $x,y \in B_j^F$ for some $j =1,\dotsc,C$.

Now fix $x,y \in B^F_j$ for some $j=1,\dotsc,C$, 
let $F(x,y)=\{g \in F \colon H_{\alpha}(gx,gy)= 1\}$. 
Since $H_{F^{-1}\alpha}(x,y)<\varepsilon$, $|F(x,y)|<\varepsilon|F|$.
Notice that $H_{\alpha}(gx,gy)=0 $ for any $g \in F \setminus F(x,y)$ .
Then
\begin{align*}
    \bar{d}_F(x,y)&=\frac{1}{|F|}\left(\sum_{g \in F \setminus F(x,y)}d(gx,gy) + \sum_{g \in F(x,y)}d(gx,gy)\right) \\
    & \leq\frac{|F(x,y)|}{|F|}<  \varepsilon.
\end{align*}
Thus, $\mathcal{C}_d(F,\varepsilon)\leq C$ for any finite subset $F$ of $S$.
\end{proof}

Let $(X,d,G)$ be a topological dynamical system and $G$ be a countably infinite discrete amenable group.
Let $\mu$ be a $G$-invariant measure on $X$ and $(F_n)_{n=1}^\infty$ is a F{\o}lner sequence in $G$.
We say that 
\begin{itemize}
\item $\mu$ is \emph{mean equicontinuous with respect to $(F_n)_{n=1}^\infty$} if for every $\tau>0$ there exists a Borel subset $X_\tau$ of $X$ with $\mu(X_\tau)>1-\tau$ such that 
 for any $\varepsilon>0$ there exists $\delta>0$ with the property that 
 for any $x,y\in X_\tau$ with $d(x,y)<\delta$,
 \[
     \limsup_{n\to\infty}\frac{1}{|F_n|}\sum_{g\in F_n}d(gx,gy)<\varepsilon;
 \]
\item $\mu$ is \emph{equicontinuous in the mean with respect to $(F_n)_{n=1}^\infty$}
 if for every $\tau>0$ there exists a Borel subset $X_\tau$ of $X$ with $\mu(X_\tau)>1-\tau$ such that 
 for any $\varepsilon>0$ there exists $\delta>0$ with the property that 
 for any $x,y\in X_\tau$ with $d(x,y)<\delta$,
 \[
    \frac{1}{|F_n|}\sum_{g\in F_n}d(gx,gy)<\varepsilon,\quad \forall n\in\mathbb{N};
 \]
\item $\mu$ has \emph{bounded $((F_n)_{n=1}^\infty,d)$-mean complexity}
 if for every $\varepsilon>0$ there exists $C=C(\varepsilon)>0$ such that 
 $\mathcal{C}_d(F_n,\varepsilon)\leq C$ for all $n\in\mathbb{N}$.
\end{itemize}

The following result is part of the main result in \cite[Theorem 1.3]{YZZ21}.
One can give a new proof of the following result using the idea in the proof of Theorems~\ref{thm:main2} and~\ref{thm:tds-mps-cp}, 
We leave the details to the reader.

\begin{thm}
Let $(X,d,G)$ be a topological dynamical system and $G$ being a countably infinite discrete amenable group.
Let $\mu$ be a $G$-invariant measure on $X$ and $(F_n)_{n=1}^\infty$ is a tempered F{\o}lner sequence in $G$.
Then the following assertions are equivalent:
\begin{enumerate}
\item $(X,\mathcal{B}_X,\mu, G)$ has discrete spectrum;
\item $\mu$ is mean equicontinuous with respect to $(F_n)_{n=1}^\infty$;
\item $\mu$ is equicontinuous in the mean with respect to $(F_n)_{n=1}^\infty$;
\item $\mu$ has bounded $((F_n)_{n=1}^\infty,d)$-mean complexity.
\end{enumerate}
\end{thm}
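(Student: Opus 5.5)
The plan is to run the cycle (1)$\Rightarrow$(4)$\Rightarrow$(3)$\Rightarrow$(2)$\Rightarrow$(1). Most steps transport the partition-level results through Theorem~\ref{thm:tds-mps-cp}, Theorem~\ref{thm:mps-equi-result} and Proposition~\ref{prop:mps-mean-eq}.

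\textbf{Step (1)$\Rightarrow$(4).} By Theorem~\ref{thm:mps-equi-result}, discrete spectrum makes every finite partition have bounded mean complexity along $G$. Theorem~\ref{thm:tds-mps-cp} with $S=G$ then gives bounded $(G,d)$-mean complexity of $\mu$, and specializing to $F=F_n$ yields (4).

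\textbf{Step (4)$\Rightarrow$(1).} I mirror the ($\Rightarrow$) half of the proof of Theorem~\ref{thm:tds-mps-cp}, but only for the sets $F=F_n$. Fix a partition $\alpha$, take compact $P_i\subset A_i$, and use the same $\delta$ and the same counting of bad times through $W_F$. This shows that $\alpha$ has bounded mean complexity with respect to $(F_n)$. Proposition~\ref{prop:mps-mean-cp} upgrades this to bounded mean complexity along $G$, and Theorem~\ref{thm:mps-equi-result} gives discrete spectrum.

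\textbf{Step (3)$\Rightarrow$(2).} This is immediate, since $\frac{1}{|F_n|}\sum_{g\in F_n}d(gx,gy)<\varepsilon$ for all $n$ implies the $\limsup$ is at most $\varepsilon$. The definitions use a strict inequality, so I apply (3) with $\varepsilon/2$ in place of $\varepsilon$.

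\textbf{Step (2)$\Rightarrow$(4).} Fix $\varepsilon$ and take $\tau=\varepsilon/2$, with $\delta$ chosen for $\varepsilon/4$. Cover the compact space $X$ by finitely many $\delta/2$-balls, and split $X_\tau$ into pieces $Q_1,\dots,Q_k$ of diameter less than $\delta$. For each $j$ pick $x_j\in Q_j$. Every $y\in Q_j$ satisfies $\limsup_n \bar d_{F_n}(x_j,y)<\varepsilon/4$. Let $U_N(x_j)=\{y\in Q_j\colon \bar d_{F_n}(x_j,y)<\varepsilon/2\ \forall n\ge N\}$; these sets increase in $N$ and exhaust $Q_j$. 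As in Lemma~\ref{lem:partition-seq-mean-equi}, choose $N_0$ with $\mu\bigl(\bigcup_j U_{N_0}(x_j)\bigr)>1-\varepsilon$. For $n\ge N_0$ this gives $\mathcal{C}_d(F_n,\varepsilon)\le k$. The finitely many $n<N_0$ are handled by the remark $\mathcal{C}_d(F,\varepsilon)\le|\alpha|^{|F|}$.

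\textbf{Step (1)$\Rightarrow$(3), the main obstacle.} Equicontinuity in the mean needs a single $\delta$ controlling $\bar d_{F_n}$ for \emph{all} $n$ on a large set, and this requires the tempered pointwise ergodic theorem. I first establish (2) from (1). Fix $\varepsilon$ and a partition $\alpha$ into sets of diameter less than $\varepsilon$. Write $\bar d_{F_n}(x,y)$ as an ergodic average of $d$ on $X\times X$, which converges $\mu\times\mu$-a.e. along the tempered sequence. Proposition~\ref{prop:mps-mean-eq}(2) shows that $\alpha$ is mean equicontinuous with respect to $(F_n)$. This gives sets $B_1,\dots,B_k$ covering all but $\varepsilon$ of the measure, on which $\limsup_n H_{F_n^{-1}\alpha}<\varepsilon$, and hence $\limsup_n\bar d_{F_n}<2\varepsilon$ (using $d\le 1$).

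Next I shrink the $B_i$ to compact subsets $K_i$ at positive mutual distances $\ge\delta$. Then $d(x,y)<\delta$ with $x,y\in\bigcup_i K_i$ forces $x,y$ into the same $K_i$. To get one set $X_\tau$ working for all $\varepsilon$, take $\varepsilon_m=2^{-m}$ and $\tau_m=\tau 2^{-m}$, and intersect the corresponding sets $\bigcup_i K_i^{(m)}$. This yields (2).

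Finally I upgrade $\limsup$ to ``for all $n$'' by the argument of Lemma~\ref{lem:partition-seq-mean-equi}: Egorov's theorem gives uniformity for $n\ge N$. For the finitely many $n<N$, I use uniform continuity of each of the finitely many maps $x\mapsto gx$, $g\in\bigcup_{n<N}F_n$, and shrink $\delta$ accordingly. I expect the delicate point to be doing the Egorov step uniformly in pairs via Fubini, as in Proposition~\ref{prop:mps-mean-eq}(2), while keeping the diagonal structure over $m$ consistent.
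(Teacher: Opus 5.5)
Your proposal is correct and follows the route the paper itself indicates. The paper gives no proof, only the hint to use the arguments behind Theorems~\ref{thm:main2} and~\ref{thm:tds-mps-cp}; temperedness enters your argument only through Proposition~\ref{prop:mps-mean-eq}(2).

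Two small points. First, in the diagonal step the partition-level parameter must be at most $\tau_m$, e.g.\ $\min\{2^{-m},\tau 2^{-m-1}\}$. With $\varepsilon_m=2^{-m}$ the pieces only cover measure $1-2^{-m}$, and the bound $\mu(X_\tau)>1-\tau$ fails. Second, the final upgrade to all $n$ needs no Egorov argument on pairs. It suffices to use the fixed-pivot monotone exhaustion of Lemma~\ref{lem:partition-seq-mean-equi} for $n\ge N_0$, uniform continuity of the finitely many maps for $n<N_0$, and passage to separated compact pieces.
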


\noindent \textbf{Acknowledgments}: The authors were partially supported by National Key R\&D Program of China (2024YFA1013601, 2024YFA1013600), NSF of China (12222110),
NSF of Guangdong Province (2026A1515011212) and a grant from the Department of Education of Guangdong Province (2025KCXTD013).


\begin{thebibliography}{99}

\bibitem{AN23}
el Abdalaoui, el Houcein; Nerurkar, Mahesh. Weakly tame systems, their characterizations and applications. Monatsh. Math. 201 (2023), no. 3, 725--769. 

\bibitem{D11} 
Downarowicz, Tomasz. Entropy in dynamical systems. New Mathematical Monographs, 18. Cambridge University Press, Cambridge, 2011.

\bibitem{G17} 
García-Ramos, Felipe. 
Weak forms of topological and measure-theoretical equicontinuity: relationships with discrete spectrum and sequence entropy. 
Ergodic Theory Dynam. Systems 37 (2017), no. 4, 1211--1237.

\bibitem{GM19} 
García-Ramos, Felipe; Marcus, Brian. 
Mean sensitive, mean equicontinuous and almost periodic functions for dynamical systems, 
Discrete Contin. Dyn. Syst.  39 (2019), no.2, 729--746; 

\bibitem{G03} 
Glasner, Eli. Ergodic theory via joinings. Mathematical Surveys and Monographs, 101. American Mathematical Society, Providence, RI, 2003. 

\bibitem{F97}
Ferenczi, Sébastien. Measure-theoretic complexity of ergodic systems. Israel J. Math. 100 (1997), 189--207.

\bibitem{F51} 
Fomin, S. On dynamical systems with a purely point spectrum. (Russian) Doklady Akad. Nauk SSSR (N.S.) 77 (1951), 29--32. 

\bibitem{HN42} Halmos, Paul R.; von Neumann, John. Operator methods in classical mechanics. II. Ann. of Math. (2) 43 (1942), 332--350. 

\bibitem{HMXZ24} 
Hu, Zongrui; Ma, Xiao; Xu, Leiye; Zhou, Xiaomin.
Discrete spectrum of probability measures for locally compact group actions. arXiv:2412.17409 

\bibitem{HLTXY21}
Huang, Wen; Li, Jian; Thouvenot, Jean-Paul; Xu, Leiye; Ye, Xiangdong. Bounded complexity, mean equicontinuity and discrete spectrum. Ergodic Theory Dynam. Systems 41 (2021), no. 2, 494--533. 

\bibitem{HLY11} 
Huang, Wen; Lu, Ping; Ye, Xiangdong. Measure-theoretical sensitivity and equicontinuity. Israel J. Math. 183 (2011), 233--283. 

\bibitem{HSY05} 
Huang, Wen; Shao, Song; Ye, Xiangdong. Mixing via sequence entropy. Algebraic and topological dynamics, 101--122, Contemp. Math., 385, Amer. Math. Soc., Providence, RI, 2005. 

\bibitem{HWY19}
Huang, Wen; Wang, Zhiren; Ye, Xiangdong. Measure complexity and Möbius disjointness. Adv. Math. 347 (2019), 827--858. 

\bibitem{HY09}
Huang, Wen; Ye, Xiangdong. Combinatorial lemmas and applications to dynamics, Adv. Math 220 (2009), no.6, 1689--1716.

\bibitem{K67} 
Kušnirenko, A. G. Metric invariants of entropy type. (Russian) Uspehi Mat. Nauk 22 (1967), no. 5(137), 57--65.

\bibitem{LSS24} 
Lenz, Daniel; Spindeler, Timo; Strungaru, Nicolae. Pure point spectrum for dynamical systems and mean, Besicovitch and Weyl almost periodicity. Ergodic Theory Dynam. Systems 44 (2024), no. 2, 524--568. 

\bibitem{LTY15} 
Li, Jian; Tu, Siming; Ye, Xiangdong. Mean equicontinuity and mean sensitivity. Ergodic Theory Dynam. Systems 35 (2015), no. 8, 2587--2612.

\bibitem{L01} 
Lindenstrauss, Elon. Pointwise theorems for amenable groups. Invent. Math. 146 (2001), no. 2, 259--295.

\bibitem{LWX25} 
Liu, Chunlin; Wang, Xiangtong; Xu, Leiye. Sequence entropy and IT-tuples for minimal group actions. Adv. Math. 467 (2025), Paper No. 110183, 34 pp. 

\bibitem{S82} 
Scarpellini, Bruno. Stability properties of flows with pure point spectrum. J. London Math. Soc. (2) 26 (1982), no. 3, 451--464. 

\bibitem{XX24} Xu, Fang; Xu, Leiye. Discrete spectrum for group actions. Dyn. Syst. 39 (2024), no. 1, 141--149. 

\bibitem{VZP13} 
Vershik, Anatoly M.; Zatitskiy, Pavel B.; Petrov, Fedor V. Geometry and dynamics of admissible metrics in measure spaces. Cent. Eur. J. Math. 11 (2013), no. 3, 379--400. 

\bibitem{VVZ23} 
Vershik, A. M.; Veprev, G. A.; Zatitskii, P. B. Dynamics of metrics in measure spaces and scaling entropy. (Russian) ; translated from Uspekhi Mat. Nauk 78 (2023), no. 3(471), 53--114 Russian Math. Surveys 78 (2023), no. 3, 443--499. 

\bibitem{Y19} 
Yu, Tao. Measure-theoretic mean equicontinuity and bounded complexity. J. Differential Equations 267 (2019), no. 11, 6152--6170. 

\bibitem{YZZ21} 
Yu, Tao; Zhang, Guohua; Zhang, Ruifeng. Discrete spectrum for amenable group actions. Discrete Contin. Dyn. Syst. 41 (2021), no. 12, 5871--5886.

\end{thebibliography}
\end{document}